\newtheorem{ter}{Theorem}[section]
\newtheorem{hyp}{Conjecture}[section]
\newtheorem{pr}{Proposition}[section]
\newtheorem{cor}{Corollary}[section]
\newtheorem{lm}{Lemma}[section]
\newtheorem{rem}{Remark}[section]
\newtheorem{ex}{Example}[section]
\newenvironment{proof}{\medskip\noindent{\it Proof. }}
{

\vspace{-0.4cm} $\Box$ {\raggedleft   

}} 
\begin{document}

\title{Donkin-Koppinen filtration for general linear supergroups}
\author{R. La Scala and A.N.Zubkov}
\date{}

\maketitle
\begin{abstract}
We consider a generalization of Donkin-Koppinen filtrations for
coordinate superalgebras of general linear supergroups. More
precisely, if $G=GL(m|n)$ is a general linear supergroup of
(super)degree $(m|n)$, then its coordinate superalgebra $K[G]$ is
a natural $G\times G$-supermodule. For every finitely generated
ideal $\Gamma\subseteq \Lambda\times\Lambda$,
the largest
subsupermodule $O_{\Gamma}(K[G])$ of $K[G]$, which has all composition factors
of the form $L(\lambda)\otimes L(\mu)$ where $(\lambda, \mu)\in\Gamma$, has
a decreasing filtration $O_{\Gamma}(K[G])=V_0\supseteq
V_1\supseteq\ldots$ such that $\bigcap_{t\geq 0}V_t=0$ and
$V_t/V_{t+1}\simeq V_-(\lambda_t)^*\otimes H_-^0(\lambda_t)$ for each $t\geq 0$. Here
$H_-^0(\lambda)$ is a costandard $G$-supermodule, and $V_-(\lambda)$ is a standard
$G$-supermodule, both of highest weight $\lambda\in\Lambda$ (see \cite{z}). We deduce the existence of such a filtration from
more general facts about standard and costandard filtrations in certain highest weight categories which will be
proved in Section 4.
Until now, analogous results were known only for highest weight categories with finite sets of weights.
We believe that the reader will find the results of Section 4 interesting on its own.  
Finally, we apply our main result to describe invariants of (co)adjoint action of $G$.

\end{abstract}

\section*{Introduction}

Let $G$ be a reductive algebraic group defined over an
algebraically closed field $K$. The group $G\times G$ acts on $G$
by $(g, (g_1, g_2))\mapsto g_1^{-1}gg_2$ for $g, g_1, g_2\in G$. This
action induces a structure of a rational $G$-bimodule on $K[G]$. Donkin 
proved first (and Koppinen observed this in \cite{kop} as a remark) that $K[G]$ has an increasing
$G$-bimodule filtration $0\subseteq V_1\subseteq
V_2\subseteq\ldots$ such that $\bigcup_{t\geq 1}V_t=K[G]$ and
$V_t/V_{t-1}\simeq V(\lambda_t)^*\otimes H^0(\lambda_t)$ for
$\lambda_t\in X(T)^+$. Additionally, if $k > l$, then either
$\lambda_k \leq\lambda_l$ or $\lambda_k$ is not comparable to $\lambda_l$.

In the present article we generalize this result to the case of the general
linear supergroup $G=GL(m|n)$. In contrast to the classical case,
the $G$-superbimodule $K[G]$ has no increasing filtration as
above. In fact, a set of highest weights $\Lambda$ of simple
$G$-supermodules has no minimal elements. Nevertheless, one can
prove that for any finitely generated ideal $\Gamma\subseteq
\Lambda\times\Lambda$, the largest subsupermodule
$O_{\Gamma}(K[G])$ of $K[G]$, which has all composition factors of the form
$L(\lambda)\otimes L(\mu)$ where $(\lambda, \mu)\in\Gamma$, has a
decreasing filtration $O_{\Gamma}(K[G])=V_0\supseteq
V_1\supseteq\ldots$ such that $\bigcap_{t\geq 0}V_t=0$ and
$V_t/V_{t+1}\simeq V_-(\lambda_t)^*\otimes H_-^0(\lambda_t)$ for each $t\geq 0$. Here
$H_-^0(\lambda)=ind^G_{B^-} K_{\lambda}$ is an induced supermodule
of highest weight $\lambda$, $V_-(\lambda)$ is a Weyl supermodule
of highest weight $\lambda$, $B^-$ is a Borel subsupergroup of $G$
consisting of all lower triangular matrices, and $K_{\lambda}$ is a
natural (even) one-dimensional $B^-$-supermodule of weight
$\lambda$ (see \cite{z} for more details). Moreover, $V_-(\lambda)^*\simeq 
H^0_+(-\lambda)=ind^G_{B^+} K_{-\lambda}$, where $B^+$ is the opposite Borel subsupergroup
but, in general, $V_-(\lambda)^*$ is not isomorphic to any $H^0_-(\mu)$. Therefore we have to work with
induced and Weyl supermodules corresponding to both Borel subsupergroups as well as with both
orderings of $\Lambda$.  
In the last section
we use the above filtrations to describe invariants of the (co)adjoint action
of $G$.

\section{Preliminary definitions and notations}

Let $G$ be an affine supergroup. In other words, $G$ is a
(representable) functor from the category of commutative
superalgebras $SAlg_K$ to the category of groups $Gr$ such that
$G(A)=Hom_{SAlg_K}(K[G], A)$ for $A\in SAlg_K$. The Hopf superalgebra
$K[G]$ is called the {\it coordinate} superalgebra of $G$. The category
of (left) $G$-supermodules with even morphisms will be denoted by
$G-mod$. The category $G-mod$ is equivalent to the category of
(right) $K[G]$-supercomodules with even morphisms (see \cite{bkl, z}). If
$V\in G-mod$, then its coaction map will be denoted by $\rho_V$ and we will 
write it using Sweedler's notation as $\rho_V(v)=\sum v_1\otimes f_2$ for  
$v,v_1\in V$ and $f_2\in K[G]$.

In what follows, let $Hom_G(V, W)$ be a (super)space of all (not
necessarily even) morphisms between $G$-supermodules $V$ and $W$. In
particular, $Hom_{G-mod}(V, W)= Hom_G(V, W)_0$. In the same way, let
$Ext^i_G(V, ?)=R^i Hom_G(V, ?)$ for $i\geq 0$. Every $Ext^i_G(V, W)$ has
a superspace structure given by $Ext^i_G(V, W)_{\epsilon}=R^i Hom_G(V,
W)_{\epsilon}$ for $\epsilon=0, 1$. Moreover, $Ext^i_G(V^c, W)\simeq
Ext^i_G(V, W^c)=Ext^i_G(V, W)^c$, where $V^c$ is conjugated to $V$ (see \cite{z}).

Let $K[c_{ij}|1\leq i, j\leq m+n]$ be a commutative superalgebra
freely generated by elements $c_{ij}$, where $|c_{ij}|=0$ if 
$1\leq i, j\leq m$ or $m+1\leq i, j\leq m+n$, and $|c_{ij}|=1$ otherwise. 
Denote a generic matrix $(c_{ij})_{1\leq i, j\leq
m+n}$ by $C$ and put
$$C_{00}=(c_{ij})_{1\leq i, j\leq m}, C_{01}=(c_{ij})_{1\leq i\leq m, m+1\leq j\leq
m+n},$$ $$C_{10}=(c_{ij})_{m+1\leq i\leq m+n, 1\leq j\leq n},
C_{11}=(c_{ij})_{m+1\leq i\leq m+n, m+1\leq j\leq m+n},$$ and
$d_1=\det(C_{00}), d_2=\det(C_{11})$. The general linear
supergroup $GL(m|n)$ is an algebraic supergroup whose coordinate
superalgebra $K[GL(m|n)]$ is isomorphic to $K[c_{ij}|1\leq i,
j\leq m+n]_{d_1, d_2}$. The comultiplication and the counit of
$K[GL(m|n)]$ are given by $\delta_{GL(m|n)}(c_{ij})=\sum_{1\leq
t\leq m+n}c_{it}\otimes c_{tj}$ and 
$\epsilon_{GL(m|n)}(c_{ij})=\delta_{ij}$, respectively. For a
definition of the antipode see \cite{z, ber}.

Let $\cal C$ be a $K$-abelian and locally artinian Grothendieck
category (see \cite{cps, bd} for definitions). Assume that all
simple objects in $\cal C$ are indexed by elements of a
partially ordered set $(\Lambda, \leq)$. For each $\lambda\in \Lambda$,
let $L(\lambda)$ be a
simple object and $I(\lambda)$ be its injective envelope.
The costandard object $\nabla(\lambda)$ is
defined as the largest subobject of $I(\lambda)$ whose composition
factors are $L(\mu)$ such that $\mu\leq\lambda$. For example, if $\cal C$
is a highest weight category in the sense of \cite{cps}, then the
costandard objects coincide with the first members of good
filtrations of their injective envelopes. Denote by ${\cal C}_f$ a
full subcategory of $\cal C$ consisting of all finite objects. If
all $\nabla(\lambda)$ belong to ${\cal C}_f$ and ${\cal C}_f$ has
a duality $\tau$, preserving simple objects, then one can define
standard objects in $\cal C$ by
$\Delta(\lambda)=\tau(\nabla(\lambda))$ for $\lambda\in\Lambda$. Any
costandard object $\nabla(\lambda)$ is uniquely defined by the
following universal property: If $W\in {\cal C}$ is such that
$soc(W)=L(\lambda)$ and all other composition factors of $W$ are
$L(\mu)$ for $\mu <\lambda,$ then $W$ is isomorphic to a subobject of
$\nabla(\lambda)$. Symmetrically, if $W/rad W\simeq L(\lambda)$
and all other composition factors of $rad W$ are $L(\mu)$ for $\mu
<\lambda,$ then $W$ is isomorphic to a factor of
$\Delta(\lambda)$.

\section{Hochschild-Serre spectral sequences}
Let $G$ be an affine supergroup and $N$ be a normal subsupergroup
of $G$. The dur $K$-sheaf $\widetilde{\widetilde{G/N}}$ is an affine
supergroup (see Theorem 6.2 of \cite{z1}). Moreover,
$K[\widetilde{\widetilde{G/N}}]\simeq K[G]^N$. To simplify our notation
we denote $\widetilde{\widetilde{G/N}}$ just by $G/N$. If $V\in G-mod$,
then $V^N$ has a natural structure of a $G/N$-supermodule. More
precisely, by Proposition 3.1 from \cite{z}, $V$ is embedded (as a
supercomodule) into a direct sum of several copies of $K[G]$ and
$K[G]^c$. It follows that $V^N$ is the largest subsupermodule of
the $G$-supermodule $V$ whose coefficient space $cf(V^N)$ lies in
$K[G]^N$. Denote the canonical epimorphism $G\to G/N$ by
$\pi$. Following notations of \cite{z}, there is a restriction
functor $\pi_0 : G/N-mod\to G-mod$. A proof of the following
lemma and proposition can be copied from Lemma 6.4 and Proposition 6.6 in Part II of \cite{jan}.
\begin{lm}
The functor $V\to V^N$ is left exact and right adjoint to $\pi_0$.
\end{lm}
\begin{pr}
For $M\in G/N-mod$ and $V, U\in G-mod$ such that $\dim U <\infty,$ there are the
following spectral sequences :

$1) E^{n, m}_2=Ext^n_{G/N}(M, Ext^m_N(U, V))\Rightarrow
Ext^{n+m}_G(M\otimes U, V),$

$2) E^{n, m}_2=Ext^n_{G/N}(M, H^m(N, V))\Rightarrow
Ext^{n+m}_G(M, V),$ and 

$3) E^{n, m}_2=H^n(G/N, H^m(N, V))\Rightarrow H^{n+m}(G, V).$
\end{pr}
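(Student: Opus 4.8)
The plan is to establish the three Hochschild–Serre spectral sequences by reducing everything to the Grothendieck spectral sequence for the composition of two functors, one of which is the invariants functor $V \mapsto V^N$ and the other an appropriate $\mathrm{Hom}$ or invariants functor over the quotient $G/N$. I would prove the three statements in a uniform way, noting that (2) and (3) are special cases or close variants of (1): taking $M = K$ (the trivial module) in (1) and using $\mathrm{Ext}^m_N(U,V) \cong H^m(N, U^* \otimes V)$ would yield (2) after replacing $V$, while (3) is the case where one further replaces the $G/N$-Ext by $G/N$-cohomology, i.e. takes $M = K$ again at the outer level.

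\medskip\noindent First I would recall the key identity relating invariants over $G$, over $N$, and over $G/N$: for any $G$-supermodule $V$ one has $V^G \cong (V^N)^{G/N}$, where $V^N$ is regarded as a $G/N$-supermodule via the structure described before Lemma 2.1. This is the composite-functor relation that drives the Grothendieck spectral sequence. To apply Grothendieck's theorem I must verify its hypotheses: that the functor $(-)^N : G\text{-}mod \to G/N\text{-}mod$ sends injective objects to objects that are acyclic for the outer functor, and that both functors are left exact. Left exactness of $(-)^N$ is Lemma 2.1. For the outer functors, left exactness of $\mathrm{Hom}_{G/N}(M, -)$, $H^0(G/N, -)$, and of $(-)^{G/N}$ is standard.

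\medskip\noindent The technical heart is the acyclicity condition: I must show that if $I$ is an injective $G$-supermodule then $I^N$ is acyclic for $\mathrm{Ext}^{\bullet}_{G/N}(M, -)$ (respectively for $H^\bullet(G/N, -)$). Since $(-)^N$ is right adjoint to the exact restriction functor $\pi_0$ (again Lemma 2.1), it carries injectives to injectives: an exact left adjoint forces its right adjoint to preserve injectivity. Thus $I^N$ is an injective $G/N$-supermodule and hence acyclic for any left-exact functor out of $G/N\text{-}mod$, which is exactly what Grothendieck's theorem requires. This is the step I expect to be the main obstacle in the sense that it is where the super-structure and the delicate existence of the quotient supergroup $G/N$ (with $K[G/N] \cong K[G]^N$) must be handled carefully; but as the excerpt indicates, the arguments of Lemma 6.4 and Proposition 6.6 in Part II of Jantzen's book transfer directly, since the formal properties of induction, restriction, and invariants used there hold verbatim in the super setting once Lemma 2.1 is in place.

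\medskip\noindent For spectral sequence (1), the finite-dimensionality hypothesis $\dim U < \infty$ enters so that tensoring with $U$ and the adjunction $\mathrm{Hom}_G(M \otimes U, V) \cong \mathrm{Hom}_G(M, \mathrm{Hom}_K(U,V)) \cong \mathrm{Hom}_G(M, U^* \otimes V)$ behaves correctly and commutes with the relevant functors; I would identify the inner Ext-group $\mathrm{Ext}^m_N(U, V)$ with $H^m(N, U^* \otimes V)$ and feed this into the composite-functor sequence for $\mathrm{Hom}_{G/N}(M, -) \circ (-)^N$. Once the three acyclicity verifications are done, the $E_2$-pages and the abutments in (1), (2), (3) follow immediately from Grothendieck's theorem, completing the proof.
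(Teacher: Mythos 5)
Your proposal is correct and takes essentially the same route as the paper, whose ``proof'' is just a citation of Jantzen (Lemma 6.4 and Proposition 6.6): the Grothendieck spectral sequence for the composite of $\mathrm{Hom}_{G/N}(M,-)$ with $(-)^N$ (composed with the exact, injective-preserving functor $U^*\otimes -$ for statement (1)), with the acyclicity hypothesis supplied by the observation that $(-)^N$, being right adjoint to the exact inflation functor $\pi_0$ (Lemma 2.1), carries injective $G$-supermodules to injective $G/N$-supermodules. One small slip that does not affect your uniform argument: statement (2) is the specialization $U=K$ of (1), and (3) is then the further specialization $M=K$, rather than (2) arising from $M=K$ as you wrote.
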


\section{Auxiliary results}

Let $G$ be an affine supergroup. A right action of
$G\times G$ on $G$ is given by $(g, (g_1, g_2))\mapsto g_1^{-1}gg_2$ 
for $g,g_1, g_2\in G(A)$ and $A\in SAlg_K$. Its dual morphism $\rho : K[G]\to
K[G]\otimes K[G]^{\otimes 2}$  is defined by $$\rho(f)=
\sum (-1)^{|f_1||f_2|}f_2\otimes s_G(f_1)\otimes f_3$$
for any $f\in K[G]$, where $s_G$ is the antipode of $G$.
Composing this action of $G\times G$ on $G$ with the diagonal inclusion $G\to G\times G$, we obtain a
right adjoint action of $G$ on itself. Its dual morphism $\nu_l$ is given by
$$\nu_l(f)=\sum (-1)^{|f_1||f_2|}f_2\otimes s_G(f_1)f_3,$$
(see \cite{z1}).

Let $L$ and $H$ be affine supergroups, $V$ be a $L$-supermodule via $\rho_V(v)=\sum v_1\otimes f_2$
and $W$ be a $H$-supermodule via $\rho_W(w)=\sum w_1\otimes h_2$.
Then the superspace $V\otimes W$
has a natural structure of a $L\times H$-supermodule given by
$$\rho_{V\otimes W}(v\otimes w)=\sum (-1)^{|w_1||f_2|}v_1\otimes
w_1\otimes f_2\otimes h_2.$$ 
Let $\sigma$ be a Hopf
superalgebra anti-isomorphism of $K[L]$ and $\tau$ be a Hopf
superalgebra anti-isomorphism of $K[H]$. Then 
$\sigma\otimes\tau$ is an anti-isomorphism of Hopf superalgebra
$K[L\times H]$ and there are dualities $V\to V^{<\sigma>}, W\to
W^{<\tau>}$ and $U\to U^{<\sigma\otimes\tau>}$ of the categories
$L-smod, H-smod$ and $L\times H-smod$, repectively (see \cite{z} for more details).

\begin{lm}
If $V$ is a finite-dimensional $L$-supermodule and $W$ is a finite-dimensional $H$-supermodule,
then $V^{<\sigma>}\otimes W^{<\tau>}$ and $(V\otimes
W)^{<\sigma\otimes\tau>}$ are canonically isomorphic as $L\times
H$-supermodules. 
Additionally, assume that $V'$ is a $L$-supermodule and
$W'$ is a $H$-supermodule, and consider $(V\otimes W)$ and
$(V'\otimes W')$ as $L\times H$-supermodules, and 
$(V\otimes V')$ and $(W\otimes W')$ as supermodules
with respect to the diagonal action of $L$ and $H$, respectively.
Then $(V\otimes W)\otimes (V'\otimes W')$ and $(V\otimes V')\otimes (W\otimes W')$ 
are canonically isomorphic as $L\times H$-supermodules.
\end{lm}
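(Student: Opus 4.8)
The plan is to prove both isomorphisms by writing down the obvious candidate maps and checking directly that they are morphisms of $L\times H$-supermodules, the only nontrivial point being that the Koszul signs match. For the first isomorphism, the underlying superspaces of $V^{<\sigma>}\otimes W^{<\tau>}$ and $(V\otimes W)^{<\sigma\otimes\tau>}$ are both $V^*\otimes W^*$ (using finite-dimensionality to identify the dual of a tensor product with the tensor product of duals), and there is a canonical even isomorphism $V^*\otimes W^*\to (V\otimes W)^*$ of superspaces given by $(\phi\otimes\psi)(v\otimes w)=(-1)^{|\psi||v|}\phi(v)\psi(w)$. I would take this as the candidate map and verify it intertwines the two $L\times H$-coactions.

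First I would recall the twisted coaction defining the duality. For a finite-dimensional supermodule $V$ with coaction $\rho_V(v)=\sum v_1\otimes f_2$, the dual $V^{<\sigma>}$ carries the coaction $\rho_{V^{<\sigma>}}(\phi)=\sum \phi_1\otimes g_2$ characterized by $\langle\phi_1,v\rangle\, g_2 = \sum(-1)^{\ast}\langle\phi,v_1\rangle\,\sigma(f_2)$ for the appropriate sign, and similarly for $W^{<\tau>}$ and for $(V\otimes W)^{<\sigma\otimes\tau>}$ using the product coaction $\rho_{V\otimes W}(v\otimes w)=\sum(-1)^{|w_1||f_2|}v_1\otimes w_1\otimes f_2\otimes h_2$ together with $\sigma\otimes\tau$. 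Applying the definitions to both sides and comparing the resulting elements of $K[L]\otimes K[H]$, the verification reduces to a bookkeeping check that the sign produced by the product coaction, the sign in the pairing $V^*\otimes W^*\to(V\otimes W)^*$, and the signs intrinsic to each single-factor duality combine consistently; since $\sigma\otimes\tau$ is defined componentwise this is a routine (if sign-heavy) computation.

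For the second isomorphism the candidate map is the associativity-and-swap isomorphism of superspaces $(V\otimes W)\otimes(V'\otimes W')\to (V\otimes V')\otimes(W\otimes W')$ sending $(v\otimes w)\otimes(v'\otimes w')\mapsto (-1)^{|w||v'|}(v\otimes v')\otimes(w\otimes w')$, the sign being forced by moving $v'$ past $w$ in the graded setting. I would check that this even linear isomorphism commutes with the two $L\times H$-coactions: on the source, $L$ and $H$ act through the external tensor product of the two $L\times H$-modules $V\otimes W$ and $V'\otimes W'$, while on the target $L$ acts diagonally on $V\otimes V'$ and $H$ acts diagonally on $W\otimes W'$. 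Both descriptions produce a coaction valued in $K[L]\otimes K[H]$, and unwinding Sweedler notation through the swap shows the coefficients agree up to exactly the sign introduced by the map.

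I expect the main obstacle to be \emph{sign coherence} rather than any structural difficulty: at each stage one must transpose factors of odd degree, and an error in a single Koszul sign would break the intertwining property, so the real work is to fix once and for all the conventions for the pairing, the product coaction, and the twisted duality, and then to track the signs carefully. Once the candidate maps are written down explicitly, naturality (the word ``canonically'' in the statement) is automatic because every constituent map—the evaluation pairing, the graded swap, and the duality twists—is canonical, so no choices are involved and the isomorphisms are functorial in all arguments.
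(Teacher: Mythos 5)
Your proposal is correct and follows essentially the same route as the paper: the paper's proof consists precisely of writing down the map $v_i^*\otimes w_j^*\mapsto (-1)^{|v_i||w_j|}(v_i\otimes w_j)^*$ (which agrees with your pairing convention $(\phi\otimes\psi)(v\otimes w)=(-1)^{|\psi||v|}\phi(v)\psi(w)$) and the graded swap $(v\otimes w)\otimes(v'\otimes w')\mapsto(-1)^{|w||v'|}(v\otimes v')\otimes(w\otimes w')$, leaving the sign-coherence verification to the reader exactly as you describe. The only cosmetic difference is that you define the first map in a coordinate-free way via the evaluation pairing, while the paper uses dual bases.
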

\begin{proof}
Let $v_1,\ldots , v_s$ be a ${\bf Z}_2$-homogeneous basis
of $V$ and $w_1,\ldots , w_t$ be a ${\bf Z}_2$-homogeneous basis
of $W$. Denote by $v_1^*\ldots , v_s^*$ and $w_1^*\ldots , w_t^*$
the dual homogeneous bases of $V^*$ and $W^*$, respectively. The
first isomorphism is given by $v_i^*\otimes w_j^*\mapsto
(-1)^{|v_i||w_j|}(v_i\otimes w_j)^*$ for  $1\leq i\leq s$ and $1\leq j\leq t$. 
The second isomorphism is given by 
$$(v\otimes w)\otimes
(v'\otimes w')\mapsto (-1)^{|w||v'|}(v\otimes v')\otimes (w\otimes
w').$$ 
A routine verification of these statements is left to the reader.
\end{proof}
\bigskip

Set $G=GL(m|n)$. In what follows denote by $\tau$ an anti-isomorphism of $K[G]$
defined by $c_{ij}\mapsto (-1)^{|i|(|j|+1)}c_{ji}$ for $1\leq i, j\leq m+n.$ 
Borel subsupergroups of $G$ consisting of lower (or upper) triangular matrices are
denoted by $B^-$ (or $B^+$, respectively). A supergroup
$T=B^-\bigcap B^+$ is a maximal torus in $G$, called the {\it
standard} torus. The category $T$-mod is semisimple, and every simple
$T$-supermodule $V$ is one-dimensional and uniquely defined by its
character $\lambda\in X(T)={\bf Z}^{m+n}$ and by its parity
$\epsilon=0$ or $1$. We denote such a simple $T$-module by $K_{\lambda}^{\epsilon}$. 
Denote induced supermodules $H^0(G/B^- , K_{\lambda}^{\epsilon})$ and
$H^0(G/B^+, K_{\lambda}^{\epsilon})$ by
$H^0_-(\lambda^{\epsilon})$ and $H^0_+(\lambda^{\epsilon})$,
respectively, and Weyl supermodules
$H^0_-(\lambda^{\epsilon})^{<\tau>}$ and
$H^0_+(\lambda^{\epsilon})^{<\tau>}$ by
$V_-(\lambda^{\epsilon})$ and $V_+(\lambda^{\epsilon})$, respectively (all
details can be found in \cite{z}). All arguments in \cite{z} can
be symmetrically restated for $B^+$. In particular, the category
$G-mod$ is a highest weight category with respect to the inverse
dominant order on $X(T)$ (see Remark 5.3 of \cite{z}). 
The corresponding costandard and standard objects are
$H^0_+(\lambda^{\epsilon})$ and $V_+(\lambda^{\epsilon})$,
respectively, where $\epsilon=0$ or $1$ and $\lambda\in X(T)^-$.
Here $H^+(\lambda)=H^0_+(\lambda^0)=H^0_+(\lambda^1)^c,
V_+(\lambda)=V_+(\lambda^0)=V_+(\lambda^1)^c$ and
$$X(T)^- =\{\lambda=(\lambda_1 ,\ldots ,\lambda_{m+n})\in {\bf Z}^{m+n}|
\lambda_1\leq\ldots\leq\lambda_m,
\lambda_{m+1}\leq\ldots\leq\lambda_{m+n}\}.$$ 
The parity
$\epsilon$ coincides with the parity of the one-dimensional
subsuperspace $H^0_+(\lambda^{\epsilon})_{\lambda}$ (or the parity of the one-dimensional subsuperspace
$V_+(\lambda^{\epsilon})_{\lambda}$, respectively). The following lemma is now
obvious.
\begin{lm}
For every $\lambda\in X(T)^+$ and $\epsilon=0$ or $1$ there are natural
isomorphisms $H^0_-(\lambda^{\epsilon})^*\simeq
V_+(-\lambda^{\epsilon})$ and $V_-(\lambda^{\epsilon})^*\simeq
H^0_+(-\lambda^{\epsilon}).$
\end{lm}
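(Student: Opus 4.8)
The plan is to reduce both isomorphisms to a single identity comparing the ordinary linear dual $*$ with the contravariant duality $<\tau>$ that defines the Weyl supermodules. First I would observe that the antipode $s_G$ and the anti-isomorphism $\tau$ are both Hopf superalgebra anti-isomorphisms of $K[G]$, so that $\phi=s_G\circ\tau^{-1}$ is a parity preserving Hopf superalgebra automorphism, dual to an automorphism of the supergroup $G$. Writing $V^{[\phi]}$ for the $G$-supermodule obtained by composing the coaction of $V$ with $\mathrm{id}\otimes\phi$, a direct comparison of the defining coactions of $V^*$ and of $V^{<\tau>}$ (both carried by the same dual superspace, the former built from $s_G$, the latter from $\tau$) yields a natural isomorphism
$$V^*\simeq (V^{<\tau>})^{[\phi]}$$
for every finite-dimensional $V$.

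Second, I would identify $\phi$ explicitly. Since $|i|(|i|+1)$ is always even, $\tau(c_{ii})=c_{ii}$, so the restriction of $\tau$ to the standard torus $T$ is the identity; hence $\phi|_T=s_G|_T$ is inversion, $\phi$ acts as $-1$ on $X(T)$, and $K^{\epsilon,[\phi]}_\mu\simeq K^\epsilon_{-\mu}$ for every weight $\mu$ and parity $\epsilon$. Because $\tau$ transposes the generic matrix while $s_G$ preserves lower triangularity, $\phi$ interchanges $B^-$ and $B^+$. Combining this with the standard compatibility of induction with twisting by a supergroup automorphism gives, for every $\mu\in X(T)^+$,
$$H^0_-(\mu^\epsilon)^{[\phi]}=\big(ind^G_{B^-}K^\epsilon_\mu\big)^{[\phi]}\simeq ind^G_{B^+}K^\epsilon_{-\mu}=H^0_+(-\mu^\epsilon),$$
and symmetrically with $B^-$ and $B^+$ interchanged.

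These two relations give the lemma at once. For the second isomorphism, since $V_-(\lambda^\epsilon)=H^0_-(\lambda^\epsilon)^{<\tau>}$ and $<\tau>$ is a duality (so $(-)^{<\tau><\tau>}\simeq\mathrm{id}$), the first relation yields $V_-(\lambda^\epsilon)^*\simeq (V_-(\lambda^\epsilon)^{<\tau>})^{[\phi]}\simeq H^0_-(\lambda^\epsilon)^{[\phi]}\simeq H^0_+(-\lambda^\epsilon)$. For the first isomorphism I would either check that $[\phi]$ commutes with $<\tau>$ (which follows from $\tau s_G=s_G\tau$), giving $H^0_-(\lambda^\epsilon)^*\simeq (H^0_-(\lambda^\epsilon)^{[\phi]})^{<\tau>}\simeq H^0_+(-\lambda^\epsilon)^{<\tau>}=V_+(-\lambda^\epsilon)$, or, more cleanly, deduce it from the second isomorphism with the roles of $B^-$ and $B^+$ exchanged, namely $V_+(-\lambda^\epsilon)^*\simeq H^0_-(\lambda^\epsilon)$, and then apply $*$ once more using $V^{**}\simeq V$.

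The main obstacle I expect is not the homological bookkeeping but the explicit verification that $\phi=s_G\circ\tau^{-1}$ does what is claimed in the super setting: that it is genuinely a supergroup automorphism inverting $T$ and swapping the two Borel subsupergroups, and in particular that the parity label $\epsilon$ is preserved rather than flipped. This amounts to a sign computation with the formulas for $\tau$ and $s_G$ from \cite{z}; the evenness of $|i|(|i|+1)$ is exactly what makes $\tau$ fix $T$ and keeps the parities honest. Everything else is formal, and routing the argument through the automorphism $\phi$ makes all the isomorphisms manifestly natural.
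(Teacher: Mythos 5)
Your argument is correct, but it is genuinely different from what the paper does: the paper gives no argument at all, declaring the lemma ``now obvious'' from the paragraph preceding it, namely from the fact that $G$-mod carries two highest weight structures (dominant order with costandard objects $H^0_-(\lambda^{\epsilon})$, and inverse dominant order with costandard objects $H^0_+(\lambda^{\epsilon})$), so that the implicit proof is the purely formal one: linear duality is a contravariant equivalence on finite-dimensional supermodules that exchanges the two structures, hence carries costandard objects for one ordering to standard objects for the other, negating highest weights and preserving the parity label. You instead construct the isomorphisms explicitly through the supergroup automorphism dual to $\phi=s_G\circ\tau^{-1}$, via the identity $V^*\simeq (V^{<\tau>})^{[\phi]}$, the computation $\phi|_T=\mathrm{inversion}$, the fact that $\phi$ swaps $B^-$ and $B^+$, and compatibility of induction with twisting. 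What your route buys is that it is self-contained (independent of the highest-weight formalism), makes the naturality manifest, and localizes all sign and parity issues in one place; what the paper's route buys is that it is instantaneous once the two highest weight structures are in place, at the cost of leaving the reader to supply exactly the kind of verification you perform. Two small points you should make explicit to close your argument fully: first, $\tau s_G=s_G\tau$ is automatic for any Hopf superalgebra anti-isomorphism (at the level of points, anti-automorphisms of a group always commute with inversion), so that step is safe; second, your appeal to $(-)^{<\tau><\tau>}\simeq\mathrm{id}$ deserves a line in the super setting, because $\tau$ has order four rather than two on $K[G]$ (indeed $\tau^2(c_{ij})=(-1)^{|i|+|j|}c_{ij}$); the point is that $\tau^2$ is conjugation by the inner element $\mathrm{diag}(I_m,-I_n)\in G(K)$, and twisting by an inner automorphism yields an isomorphic supermodule, which is exactly why $<\tau>$ is a duality as asserted in \cite{z}.
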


It is clear that $$V_+(\lambda^{\epsilon})\otimes
V_-(\mu^{\epsilon'})\simeq V_+(\lambda^{\pi})\otimes
V_-(\mu)\simeq V_+(\lambda)\otimes V_-(\mu^{\pi})$$ and
$$H^0_+(\lambda^{\epsilon})\otimes H^0_-(\mu^{\epsilon'})
\simeq H^0_+(\lambda^{\pi})\otimes H^0_-(\mu)\simeq
H^0_+(\lambda)\otimes H^0_-(\mu^{\pi}),$$ where $\pi \equiv \epsilon
+\epsilon'\pmod 2$.

\begin{pr}
For every $\lambda_1, \mu_1\in X(T)^-, \lambda_2, \mu_2\in X(T)^+$ and
$\epsilon,\epsilon'=0$ or $1$, a superspace $Ext^i_{G\times
G}(V_+(\lambda_1^{\epsilon})\otimes V_-(\lambda_2),
H^0_+(\mu_1^{\epsilon'})\otimes H^0_-(\mu_2))$ is not equal to
zero if and only if $i=0$, $\lambda_1=\mu_1$ and $\lambda_2=\mu_2$. In this
case $Hom_{G\times G}(V_+(\lambda_1^{\epsilon})\otimes
V_-(\lambda_2), H^0_+(\mu_1^{\epsilon'})\otimes H^0_-(\mu_2))$ is
a one-dimensional superspace of parity $\epsilon +\epsilon'\pmod
2$.
\end{pr}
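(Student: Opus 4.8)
The plan is to compute the Ext groups over $G\times G$ by reducing to the two tensor factors separately, using the product structure of standard and costandard objects. The key observation is that $V_+(\lambda_1^\epsilon)\otimes V_-(\lambda_2)$ and $H^0_+(\mu_1^{\epsilon'})\otimes H^0_-(\mu_2)$ are external tensor products of $G$-supermodules, where the first factor lives in the "$+$" world (standard/costandard with respect to $B^+$ and the inverse dominant order) and the second in the "$-$" world. So I would first invoke the Künneth-type decomposition for Ext over a product of supergroups,
\[
Ext^i_{G\times G}(A\otimes B,\, C\otimes D)\;\simeq\;\bigoplus_{p+q=i} Ext^p_G(A,C)\otimes Ext^q_G(B,D),
\]
taking care of the parity signs coming from the super-tensor structure (Lemma 3.1 guarantees the relevant identifications of external tensor products are canonical as $G\times G$-supermodules). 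This is the step most in need of justification in the super setting, and I expect the bookkeeping of parities to be where the argument is most delicate.

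Next I would apply the fundamental vanishing property of highest weight categories to each factor. For the "$+$" factor, $G\text{-mod}$ is a highest weight category with respect to the inverse dominant order (Remark 5.3 of \cite{z}), with standard objects $V_+(\lambda_1^\epsilon)$ and costandard objects $H^0_+(\mu_1^{\epsilon'})$; for the "$-$" factor the symmetric statement holds with the ordinary dominant order, standard objects $V_-(\lambda_2)$ and costandard objects $H^0_-(\mu_2)$. The defining Ext-orthogonality of standard versus costandard objects in a highest weight category gives
\[
Ext^p_G\bigl(V_+(\lambda_1^\epsilon),\,H^0_+(\mu_1^{\epsilon'})\bigr)=0 \quad\text{unless } p=0 \text{ and } \lambda_1=\mu_1,
\]
and likewise $Ext^q_G(V_-(\lambda_2),H^0_-(\mu_2))=0$ unless $q=0$ and $\lambda_2=\mu_2$. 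Combining through the Künneth formula forces $i=p+q=0$ together with $\lambda_1=\mu_1$ and $\lambda_2=\mu_2$, which is exactly the asserted vanishing.

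It remains to analyze the non-vanishing case. When $\lambda_1=\mu_1$ and $\lambda_2=\mu_2$, the Künneth formula reduces to
\[
Hom_{G\times G}\;\simeq\; Hom_G\bigl(V_+(\lambda_1^\epsilon),H^0_+(\lambda_1^{\epsilon'})\bigr)\;\otimes\; Hom_G\bigl(V_-(\lambda_2),H^0_-(\lambda_2)\bigr).
\]
In a highest weight category the space $Hom(\Delta(\lambda),\nabla(\lambda))$ is one-dimensional, spanned by the canonical map through the common simple head/socle $L(\lambda)$; I would record this for each factor. The first factor contributes a one-dimensional space whose parity is $\epsilon+\epsilon' \pmod 2$, since $V_+(\lambda_1^\epsilon)$ has its highest weight vector in parity $\epsilon$ while $H^0_+(\lambda_1^{\epsilon'})$ has its $\lambda_1$-weight space in parity $\epsilon'$, and the canonical hom must match socle to head. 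The second "$-$" factor contributes a one-dimensional \emph{even} space, because both $V_-(\lambda_2)$ and $H^0_-(\lambda_2)$ are normalized with the same (even) parity at the highest weight. Taking the tensor product yields a one-dimensional superspace of parity $\epsilon+\epsilon'\pmod 2$, completing the proof. The main obstacle throughout will be tracking the Koszul signs and the parity normalizations consistently so that the final parity count comes out correctly; the homological vanishing itself is a formal consequence of the highest weight structure once the Künneth reduction is in place.
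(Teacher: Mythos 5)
Your plan is correct in substance and reaches the stated conclusion, but it rests on a general K\"unneth decomposition
$Ext^i_{G\times G}(A\otimes B, C\otimes D)\simeq\bigoplus_{p+q=i}Ext^p_G(A,C)\otimes Ext^q_G(B,D)$
which you invoke as a black box (and yourself flag as the delicate point); this is precisely the step the paper does \emph{not} assume and never proves in that generality. Instead, the paper derives the needed factorization from tools it has already set up: it first rewrites the Ext group as $H^i(G\times G, V_+(\lambda_1^{\epsilon})^*\otimes H^0_+(\mu_1^{\epsilon'})\otimes V_-(\lambda_2)^*\otimes H^0_-(\mu_2))$, using Proposition 3.2 of \cite{z} together with Lemma 3.1 to rearrange the tensor factors with the correct signs, and then applies the Hochschild--Serre spectral sequence of Proposition 2.1 to the normal subsupergroup $G\times 1$ of $G\times G$. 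Since the inner term $H^m(G\times 1,-)\simeq Ext^m_G(V_+(\lambda_1^{\epsilon}),H^0_+(\mu_1^{\epsilon'}))\otimes(V_-(\lambda_2)^*\otimes H^0_-(\mu_2))$ vanishes for $m>0$ by Theorem 5.5 of \cite{z}, the spectral sequence degenerates and yields the one-sided K\"unneth statement
$Ext^i_{G\times G}(V_+(\lambda_1^{\epsilon})\otimes V_-(\lambda_2), H^0_+(\mu_1^{\epsilon'})\otimes H^0_-(\mu_2))\simeq Hom_G(V_+(\lambda_1^{\epsilon}),H^0_+(\mu_1^{\epsilon'}))\otimes Ext^i_G(V_-(\lambda_2),H^0_-(\mu_2))$,
which is all that is needed; the claim then follows from universal properties of standard and costandard objects, exactly as in your final step. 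So the two arguments share the same skeleton (reduce to factor-wise $\Delta$/$\nabla$ orthogonality, then count parities, and your parity bookkeeping in the nonvanishing case is correct), but they differ in the reduction mechanism: your route buys conceptual symmetry between the two factors at the price of proving a full K\"unneth theorem for supergroup cohomology, while the paper's route buys economy --- degeneration forced by vanishing on one side replaces K\"unneth entirely. If you want to complete your write-up within the paper's framework, replace the K\"unneth appeal by Proposition 2.1 applied to $G\times 1$ as above.
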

\begin{proof} Consider an exact sequence of algebraic supergroups
$$1\to G\to G\times G\to G\to 1,$$
where the epimorphism $G\times G\to G$ is dual to a monomorphism
$K[G]\to K[G]\otimes K[G]$ defined by $f\mapsto 1\otimes f$. The
kernel of this epimorphism coincides with $G\times 1\simeq G$.
Combining Proposition 3.2 of \cite{z} and Lemma 3.1 we infer
$$Ext^i_{G\times
G}(V_+(\lambda_1^{\epsilon})\otimes V_-(\lambda_2),
H^0_+(\mu_1^{\epsilon'})\otimes H^0_-(\mu_2))\simeq$$$$
H^i(G\times G, V_+(\lambda_1^{\epsilon})^*\otimes
V_-(\lambda_2)^*\otimes H^0_+(\mu_1^{\epsilon'})\otimes
H^0_-(\mu_2))\simeq$$$$H^i(G\times G,
V_+(\lambda_1^{\epsilon})^*\otimes H^0_+(\mu_1^{\epsilon'})\otimes
V_-(\lambda_2)^*\otimes H^0_-(\mu_2)).$$ On the other hand,
$$H^i(G\times 1,
V_+(\lambda_1^{\epsilon})^*\otimes H^0_+(\mu_1^{\epsilon'})\otimes
V_-(\lambda_2)^*\otimes H^0_-(\mu_2))\simeq$$
$$Ext^i_G(V_+(\lambda_1^{\epsilon}),
H^0_+(\mu_1^{\epsilon'}))\otimes (V_-(\lambda_2)^*\otimes
H^0_-(\mu_2))=0$$ whenever $i>0$ (see Theorem 5.5 of \cite{z}).
Proposition 2.1 and standard spectral sequence arguments imply 
$$Ext^i_{G\times
G}(V_+(\lambda_1^{\epsilon})\otimes V_-(\lambda_2),
H^0_+(\mu_1^{\epsilon'})\otimes H^0_-(\mu_2))\simeq$$
$$Hom_G(V_+(\lambda_1^{\epsilon}),
H^0_+(\mu_1^{\epsilon'}))\otimes Ext^i_G(V_-(\lambda_2),
H^0_-(\mu_2)),$$ where in the second tensor multiplier $G$ is
identified with $1\times G$. The claim follows from universal
properties of standard and costandard objects.
\end{proof}
\bigskip

Let $G_1, G_2$ be affine supergroups and let $H_1\leq G_1, H_2\leq
G_2$ be their (closed) subsupergroups.
\begin{lm}(see Lemma 3.8 in Part I of \cite{jan})
If $V_i$ is a $H_i$-supermodule for $i=1, 2$, then there is a
canonical isomorphism $ind^{G_1\times G_2}_{H_1\times
H_2}V_1\otimes V_2\simeq ind^{G_1}_{H_1} V_1\otimes
ind^{G_2}_{H_2} V_2$ of $G_1\times G_2$-supermodules.
\end{lm}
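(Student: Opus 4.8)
The final statement to prove is Lemma 3.5, the canonical isomorphism
$$ind^{G_1\times G_2}_{H_1\times H_2}(V_1\otimes V_2)\simeq ind^{G_1}_{H_1} V_1\otimes ind^{G_2}_{H_2} V_2$$
of $G_1\times G_2$-supermodules.

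The plan is to prove this by Frobenius reciprocity together with the universal property of the induction functor, rather than by any explicit manipulation of the coordinate superalgebras. First I would recall that $ind^G_H$ is right adjoint to the restriction functor from $G$-mod to $H$-mod; this is the super-analogue of Lemma 6.4 in Part II of \cite{jan} and is the exact parallel of Lemma 2.1 above for the pair $(H,G)$. The key combinatorial input is that restriction along the product inclusion $H_1\times H_2\hookrightarrow G_1\times G_2$ factors as the external tensor product of the two separate restrictions. Consequently, for any $G_1\times G_2$-supermodule $M$ that itself decomposes as an external tensor product $M_1\otimes M_2$, one expects the adjunction on the product to split into the two adjunctions on the factors.

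Concretely, I would compute, for an arbitrary $G_1\times G_2$-supermodule of the form $M_1\otimes M_2$,
$$Hom_{G_1\times G_2}\bigl(M_1\otimes M_2,\ ind^{G_1}_{H_1}V_1\otimes ind^{G_2}_{H_2}V_2\bigr)\simeq Hom_{G_1}(M_1,ind^{G_1}_{H_1}V_1)\otimes Hom_{G_2}(M_2,ind^{G_2}_{H_2}V_2),$$
using that the $Hom$ over a product of supergroups factors through the external tensor product when both arguments are external tensor products. Applying super-Frobenius reciprocity in each factor rewrites the right-hand side as
$$Hom_{H_1}(M_1,V_1)\otimes Hom_{H_2}(M_2,V_2)\simeq Hom_{H_1\times H_2}(M_1\otimes M_2,\ V_1\otimes V_2)\simeq Hom_{G_1\times G_2}\bigl(M_1\otimes M_2,\ ind^{G_1\times G_2}_{H_1\times H_2}(V_1\otimes V_2)\bigr),$$
where the last step is Frobenius reciprocity on the product. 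Since both sides represent the same functor on external tensor products, and such products generate the relevant category of comodules (every supercomodule embeds into copies of the coordinate superalgebra by Proposition 3.1 of \cite{z}, which behaves correctly under the product), Yoneda yields the desired canonical isomorphism. I would also need to check that the sign conventions in the coaction of $V\otimes W$ recorded in Section 3 make each of these identifications even, so that the isomorphism is genuinely a morphism in $G_1\times G_2$-mod and not merely a parity-twisted one.

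The main obstacle I anticipate is not the adjunction bookkeeping but verifying that the $Hom$-over-a-product factorization is compatible with the Koszul signs introduced by the $\mathbf{Z}_2$-grading; the braiding $(-1)^{|w_1||f_2|}$ appearing in $\rho_{V\otimes W}$ must be tracked so that the candidate map is a homomorphism of supercomodules with the correct parity. The cleanest route past this is to invoke Lemma 3.1 above, which already packages exactly these sign-compatibilities for external and diagonal tensor products; with that in hand the remaining verification is the routine naturality check that \cite{jan} defers, and the super-case differs only by the signs Lemma 3.1 controls.
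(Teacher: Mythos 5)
Your overall strategy (showing both sides represent the same functor on a dense family of test objects) can be made to work, but two of its load-bearing steps are stated at a level of generality where they fail. First, the factorization $Hom_{G_1\times G_2}(M_1\otimes M_2,\, N_1\otimes N_2)\simeq Hom_{G_1}(M_1,N_1)\otimes Hom_{G_2}(M_2,N_2)$, which you assert ``for an arbitrary $G_1\times G_2$-supermodule of the form $M_1\otimes M_2$'', is false without finiteness hypotheses: already for $G_1=G_2=H_1=H_2=1$ it claims $(M_1\otimes M_2)^*\simeq M_1^*\otimes M_2^*$, and the natural map $M_1^*\otimes M_2^*\to (M_1\otimes M_2)^*$ is never surjective when both $M_i$ are infinite dimensional (a functional of infinite rank, e.g. $a_i\otimes b_j\mapsto \delta_{ij}$ for bases $\{a_i\}$ of $M_1$ and $\{b_j\}$ of $M_2$, is not in the image). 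Since induced supermodules are in general infinite dimensional, you must restrict the test objects to external tensor products of \emph{finite-dimensional} supermodules; there the factorization does hold, e.g. by rewriting the left side as $\bigl((M_1^*\otimes N_1)\otimes (M_2^*\otimes N_2)\bigr)^{G_1\times G_2}$ via Lemma 3.1 and checking $(A\otimes B)^{G_1\times G_2}\simeq A^{G_1}\otimes B^{G_2}$ using that invariants commute with directed unions of supercomodules. The same caveat applies to your identification $Hom_{H_1}(M_1,V_1)\otimes Hom_{H_2}(M_2,V_2)\simeq Hom_{H_1\times H_2}(M_1\otimes M_2,\, V_1\otimes V_2)$.

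Second, your Yoneda step tests with morphisms \emph{out of} external tensor products, so you need this family to generate the category (every object a colimit of members), not to cogenerate it. The fact you invoke, Proposition 3.1 of \cite{z} --- every supercomodule embeds into a direct sum of copies of $K[G]$ and $K[G]^c$ --- is a cogeneration statement and has the wrong variance for the contravariant functors $Hom_{G_1\times G_2}(-,X)$ you are comparing. This is repairable: $K[G_1]\otimes K[G_2]$ is a directed union of subsupermodules $N_1\otimes N_2$ with $N_i$ finite dimensional, so every finite-dimensional $G_1\times G_2$-supermodule embeds into a finite direct sum of such, and dualizing (duals of finite-dimensional supermodules exist via the antipode, and $(N_1\otimes N_2)^*\simeq N_1^*\otimes N_2^*$ is precisely the first part of Lemma 3.1) exhibits every finite-dimensional supermodule as a quotient of a finite direct sum of external tensor products; local finiteness then gives colimit-density. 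With these two repairs your argument closes, but note that the paper's route is far more economical and avoids representability altogether: it combines the concrete realization of induced supermodules from Proposition 3.3 of \cite{z} with the sign-correct rearrangement $(V_1\otimes K[G_1])\otimes (V_2\otimes K[G_2])\simeq (V_1\otimes V_2)\otimes K[G_1\times G_2]$ supplied by Lemma 3.1, identifying the two sides directly.
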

\begin{proof}
Combine Proposition 3.3 of \cite{z} and Lemma 3.1. 
\end{proof}
\bigskip

The character group $X(T\times T)$ is identified with $X(T)\times
X(T)$. It is partially ordered by $(\lambda, \lambda')\leq (\mu,
\mu')$ if and only if $\lambda\geq\lambda'$ and $\mu\leq\mu'$. This ordering
corresponds to the Borel subsupergroup $B^+\times B^-$ of $G\times
G$. The epimorphism $K[B^+\times B^-]\to K[T\times T]$ is split
and $K[T\times T]$ can be canonically identified with a Hopf
subsuperalgebra of $K[B^+\times B^-]$ generated by the group-like
elements $c_{ii}^{\pm 1}\otimes c_{jj}^{\pm 1}$ for $1\leq i, j\leq
m+n$. Denote the kernel of this epimorphism by $J$. Let $V$ be a
$B^+\times B^-$-supermodule, a weight $(\lambda, \lambda')\in
X(T\times T)$ and a ${\bf Z}_2$-homogeneous vector $v\in
V_{(\lambda, \lambda')}$. As in Proposition 5.3 of \cite{z}, we have
$\rho_V(v)=v\otimes c^{(\lambda, \lambda')}+ y$, where $c^{(\lambda,
\lambda')}=\prod_{1\leq i\leq
m+n}c_{ii}^{\lambda_i}\otimes\prod_{1\leq i\leq
m+n}c_{ii}^{\lambda'_i}$ and 
$y\in (\sum_{(\mu, \mu')< (\lambda, \lambda')}
V_{(\mu, \mu')})\otimes J.$
In particular, every simple $B^+\times
B^-$-supermodule is one-dimensional and isomorphic to
$K^{\epsilon}_{(\lambda, \lambda')}\simeq
K^{\epsilon}_{\lambda}\otimes K_{\lambda'}$. If a
$B^+\times B^-$-supermodule is generated by a (${\bf
Z}_2$-homogeneous) vector $v$ of weight $(\lambda, \lambda')$,
then $V_{(\mu, \mu')}\neq 0$ implies $(\mu, \mu')\leq (\lambda,
\lambda')$ and $\dim V_{(\lambda, \lambda')}=1$. Using Proposition
5.4 of \cite{z}, one can easily check that 
if $\pi_{+-} :K[G]\to K[B^+\times B^-]$ and $\pi_{-+} : K[G]\to K[B^-\times
B^+]$ are canonical epimorphisms, then 
the morphism of superalgebras
$$K[G\times G]\stackrel{(\pi_{+-}\otimes\pi_{-+})\delta_{G\times G}}{\longrightarrow} K[B^+\times B^-]
\otimes K[B^-\times B^+]$$ is an inclusion.
Now we are ready to prove the following lemmas.

\begin{lm}
A class of costandard objects in the category of $G\times G$-supermodules consists of supermodules 
$H^0_+(\lambda^{\epsilon})\otimes H^0_-(\lambda')$,
where $(\lambda , \lambda')\in X(T)^-\times X(T)^+$ and $\epsilon=0$ or $1$. 
Moreover, the simple socle of $H^0_+(\lambda^{\epsilon})\otimes H^0_-(\lambda')$ is
isomorphic to $L(-\lambda^{\epsilon})\otimes L(\lambda')$.
\end{lm}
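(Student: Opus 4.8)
The plan is to establish two things: first, that each $H^0_+(\lambda^{\epsilon})\otimes H^0_-(\lambda')$ is a costandard object in $G\times G\text{-}mod$ with the claimed socle, and second, that every costandard object arises this way. For the highest weight structure on $G\times G\text{-}mod$, I would use the ordering on $X(T\times T)$ introduced just above the statement, namely $(\lambda,\lambda')\leq(\mu,\mu')$ iff $\lambda\geq\mu'$ [inverse dominant in the first coordinate] and $\lambda'\leq\mu'$, which corresponds to the Borel $B^+\times B^-$. Recall from the preliminary section that a costandard object $\nabla(\omega)$ is characterized by the universal property: it is the largest object with simple socle $L(\omega)$ all of whose other composition factors are $L(\nu)$ with $\nu<\omega$. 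So the whole statement reduces to verifying this universal property for the tensor products in question.

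First I would identify the socle. Since $H^0_+(\lambda^{\epsilon})$ is the costandard object for $G$ (relative to $B^+$) with simple socle $L(-\lambda^{\epsilon})$, and $H^0_-(\lambda')$ is the costandard object for $G$ (relative to $B^-$) with simple socle $L(\lambda')$ — these are exactly the socle statements recorded in \cite{z} and its symmetric restatement — I would argue that the socle of the tensor product is the tensor product of socles. The clean way to see this is to invoke Lemma 3.4 (the decomposition $ind^{G\times G}_{B^+\times B^-}=ind^G_{B^+}\otimes\, ind^G_{B^-}$ on tensor products), which already shows $H^0_+(\lambda^{\epsilon})\otimes H^0_-(\lambda')\simeq ind^{G\times G}_{B^+\times B^-}(K^{\epsilon}_{\lambda}\otimes K_{\lambda'})$, i.e. these tensor products literally are the induced modules from the one-dimensional $B^+\times B^-$-supermodules. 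The socle of such an induced module is simple, and by Frobenius reciprocity together with the weight analysis $\rho_V(v)=v\otimes c^{(\lambda,\lambda')}+y$ with $y$ in lower-weight terms, the socle must be $L(-\lambda^{\epsilon})\otimes L(\lambda')$, which is indeed $L$ of the appropriate weight.

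Next I would verify the upper-weight condition: every composition factor of $H^0_+(\lambda^{\epsilon})\otimes H^0_-(\lambda')$ other than the socle is $L(\nu)$ with $\nu$ strictly below the socle weight in the ordering on $X(T\times T)$. This follows by factoring the composition factors of the tensor product as $L(\mu_1)\otimes L(\mu_2)$ (using that simple $G\times G$-supermodules are outer tensor products of simples) and transporting the known highest weight statements for $H^0_+$ and $H^0_-$ on each factor through the product ordering; the product ordering was defined precisely so that the two one-sided orderings combine correctly. Combining the socle computation, the upper-weight bound, and the maximality that comes from the induced-module description gives the universal property, hence costandardness. Finally, to see this list is complete — that there are no other costandard objects — I would note that every simple of $G\times G$ is $L(\omega)$ for a unique $\omega=(-\lambda^{\epsilon},\lambda')\in X(T\times T)$, and each such $\omega$ is realized as a socle above, so the correspondence $\omega\mapsto\nabla(\omega)$ exhausts all costandard objects.

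The main obstacle I anticipate is the socle identification in the super setting: proving that the socle of the tensor product is exactly the tensor product of the socles, rather than something larger, and pinning down its parity. In the classical case this is immediate from the outer-tensor-product structure of simples over $G\times G$, but here the parity bookkeeping (the superscript $\epsilon$ versus the parity-shift operation $(-)^c$, and the isomorphisms $H^0_+(\lambda^0)=H^0_+(\lambda^1)^c$ recorded earlier) needs care so that the stated socle $L(-\lambda^{\epsilon})\otimes L(\lambda')$ carries the correct $\mathbf{Z}_2$-grading. I expect the cleanest route around this is the induced-module reformulation via Lemma 3.4, which reduces the socle question to the already-established one-sided facts and to Frobenius reciprocity, sidestepping a direct computation of the socle of a tensor product.
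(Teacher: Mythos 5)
Your proposal is correct and follows essentially the same route as the paper: the paper's proof is a word-by-word repetition of Propositions 5.5 and 5.6 of \cite{z} in the $G\times G$ setting, which is precisely what you reconstruct — using the induction-product lemma (Lemma 3.3 in the paper's numbering, which you cite as 3.4) to realize $H^0_+(\lambda^{\epsilon})\otimes H^0_-(\lambda')$ as $ind^{G\times G}_{B^+\times B^-}K^{\epsilon}_{(\lambda,\lambda')}$, and then running the socle, weight-bound and universal-property arguments against the $B^+\times B^-$ theory (one-dimensional simples, the expansion $\rho_V(v)=v\otimes c^{(\lambda,\lambda')}+y$, the embedding of $K[G\times G]$ into $K[B^+\times B^-]\otimes K[B^-\times B^+]$) developed immediately before the statement. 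The parity bookkeeping you flag as the main obstacle is absorbed by that same repetition, since \cite{z} already carries the $\epsilon$-superscripts through the one-sided argument.
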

\begin{proof}
A word-by-word repetition of the proofs of Propositions 5.5
and 5.6 of \cite{z}.
\end{proof}
\begin{lm}
A class of standard objects in the category of $G\times G$-supermodules consists of supermodules 
$V_+(\lambda^{\epsilon})\otimes V_-(\lambda')$,
where $(\lambda , \lambda')\in X(T)^-\times X(T)^+$ and $\epsilon=0$ or $1$.
\end{lm}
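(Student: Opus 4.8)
The plan is to obtain the standard objects directly from the costandard objects of the previous lemma by applying the duality of the highest weight category $G\times G$-mod. Recall from Section 1 that standard objects are defined by $\Delta(\lambda)=\tau(\nabla(\lambda))$, where the relevant duality is here the one on the category of finite $G\times G$-supermodules induced by a Hopf superalgebra anti-isomorphism of $K[G\times G]\simeq K[G]\otimes K[G]$. For each factor we use the anti-isomorphism $\tau$ of $K[G]$ fixed above (given by $c_{ij}\mapsto(-1)^{|i|(|j|+1)}c_{ji}$), and for $G\times G$ we take the product anti-isomorphism $\tau\otimes\tau$. First I would check that $\tau\otimes\tau$ is indeed a Hopf superalgebra anti-isomorphism of $K[G\times G]$ and that the induced duality preserves simple objects: since every simple $G\times G$-supermodule has the form $L(\nu)\otimes L(\nu')$ and, by Lemma 3.1, $(L(\nu)\otimes L(\nu'))^{<\tau\otimes\tau>}\simeq L(\nu)^{<\tau>}\otimes L(\nu')^{<\tau>}$, this reduces to the fact that $\tau$ preserves simple $G$-supermodules, which is part of the highest weight structure of $G$-mod recalled from \cite{z}.

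Next I would apply this duality to the costandard objects $H^0_+(\lambda^{\epsilon})\otimes H^0_-(\lambda')$ identified in the previous lemma. Using Lemma 3.1 (with $\sigma=\tau$ on both copies) together with the finite-dimensionality of $H^0_+(\lambda^{\epsilon})$ and $H^0_-(\lambda')$, one gets
$$(H^0_+(\lambda^{\epsilon})\otimes H^0_-(\lambda'))^{<\tau\otimes\tau>}\simeq (H^0_+(\lambda^{\epsilon}))^{<\tau>}\otimes (H^0_-(\lambda'))^{<\tau>}.$$
By the very definition of the Weyl supermodules recalled earlier, $(H^0_+(\lambda^{\epsilon}))^{<\tau>}=V_+(\lambda^{\epsilon})$ and $(H^0_-(\lambda'))^{<\tau>}=V_-(\lambda')$, so the right-hand side equals $V_+(\lambda^{\epsilon})\otimes V_-(\lambda')$. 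Since the duality sends $\nabla(\lambda)$ to $\Delta(\lambda)$ and preserves simple objects (hence the indexing poset), this shows that, as $(\lambda,\lambda')$ ranges over $X(T)^-\times X(T)^+$ and $\epsilon$ over $\{0,1\}$, the supermodules $V_+(\lambda^{\epsilon})\otimes V_-(\lambda')$ exhaust the standard objects of $G\times G$-mod.

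The routine part is the verification that $\tau\otimes\tau$ restricts to a genuine duality on the finite objects; the point to be careful about is that the costandard objects are finite-dimensional, so that Lemma 3.1 applies and $\tau$ can be taken in the sense of \cite{z}. I expect the only real obstacle to be bookkeeping: making sure the parity label $\epsilon$ and the weight conventions match on both sides, i.e. that the socle $L(-\lambda^{\epsilon})\otimes L(\lambda')$ of the costandard object corresponds under $\tau\otimes\tau$ to the head of $V_+(\lambda^{\epsilon})\otimes V_-(\lambda')$, as dictated by the universal property of standard objects recalled in Section 1.
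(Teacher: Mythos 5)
Your proposal is correct and is exactly the paper's argument: the paper's entire proof reads ``It follows from Lemma 3.1,'' meaning precisely what you spelled out --- apply the duality induced by $\tau\otimes\tau$ to the costandard objects $H^0_+(\lambda^{\epsilon})\otimes H^0_-(\lambda')$ of the previous lemma and use Lemma 3.1 to identify $(H^0_+(\lambda^{\epsilon})\otimes H^0_-(\lambda'))^{<\tau\otimes\tau>}$ with $V_+(\lambda^{\epsilon})\otimes V_-(\lambda')$. You have merely made explicit the bookkeeping (the duality preserving simples, the definition $\Delta(\lambda)=\tau(\nabla(\lambda))$ from Section 1) that the paper leaves implicit.
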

\begin{proof} It follows from Lemma 3.1.
\end{proof}
\bigskip

Following \cite{z}, we call a $G\times G$-supermodule $V$ {\it
restricted} if and only if for every $(\lambda, \lambda')\in X(T)^-\times
X(T)^+$ a dimension of $Hom_{G\times G}(V_+(\lambda)\otimes V_-(\lambda'), V)$ is finite 
and the set $${\hat V} =\{(\lambda,
\lambda')\in X(T)^-\times X(T)^+ | Hom_{G\times
G}(V_+(\lambda)\otimes V_-(\lambda'), V)\neq 0\}$$ does not
contain any infinite decreasing chain.

\begin{lm}
A restricted $G\times G$-supermodule $V$ has a costandard (or
good) filtration with quotients $H^0_+(\lambda^{\epsilon})\otimes
H^0_-(\lambda')$ if and only if one of the following equivalent conditions hold for every $(\lambda, \lambda')\in X(T)^-\times X(T)^+$:
\begin{enumerate}
\item $Ext^1_{G\times G}(V_+(\lambda)\otimes V_-(\lambda'), V)=0$,
\item $Ext^i_{G\times G}(V_+(\lambda)\otimes V_-(\lambda'), V)=0$
for any $i\geq 1$.
\end{enumerate}
\end{lm}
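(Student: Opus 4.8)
The plan is to prove this by establishing the standard chain of implications characterizing modules with costandard filtration in a highest weight category, following the pattern of the analogous classical result (Jantzen, Part II, and the cited work of Zubkov). The implications $(\text{filtration}) \Rightarrow (2) \Rightarrow (1)$ are the easy directions, while the converse $(1) \Rightarrow (\text{filtration})$ will be the main work.

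First I would prove the forward direction. Suppose $V$ has a costandard filtration with quotients of the form $H^0_+(\lambda^{\epsilon})\otimes H^0_-(\lambda')$. By Lemma 3.6 these are exactly the costandard objects of the highest weight category $G\times G$-mod, so by Proposition 3.1 applied with the splitting $1\to G\to G\times G\to G\to 1$ (exactly as in the proof of Proposition 3.1 above) one gets the vanishing $Ext^i_{G\times G}(V_+(\lambda)\otimes V_-(\lambda'),\, H^0_+(\mu^{\epsilon'})\otimes H^0_-(\mu'))=0$ for all $i\geq 1$, since the corresponding $Ext$ over $G$ between standard and costandard objects vanishes by Theorem 5.5 of \cite{z}. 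A straightforward induction on the (necessarily finite, by restrictedness) length of the filtration, using the long exact sequence of $Ext^{\bullet}_{G\times G}(V_+(\lambda)\otimes V_-(\lambda'), -)$, then yields condition $(2)$. The implication $(2)\Rightarrow(1)$ is trivial.

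The harder direction is $(1)\Rightarrow(\text{filtration})$. Here I would exploit the restrictedness hypothesis: since $\hat V$ contains no infinite decreasing chain, I can choose a weight $(\lambda,\lambda')$ that is \emph{maximal} in $\hat V$ with respect to the order on $X(T)^-\times X(T)^+$. Using the universal property of costandard objects (stated at the end of Section 1) together with the description of socles in Lemma 3.6, maximality forces the image of any nonzero map $V_+(\lambda)\otimes V_-(\lambda')\to V$ to generate a submodule whose socle is $L(-\lambda^{\epsilon})\otimes L(\lambda')$; dually one obtains an actual copy of the costandard object $H^0_+(\lambda^{\epsilon})\otimes H^0_-(\lambda')$ sitting inside $V$ as the submodule $W$ generated by the $(\lambda,\lambda')$-weight space. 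The crucial technical point is to verify that $W$ is genuinely costandard (not merely a submodule of one) and that the quotient $V/W$ still satisfies condition $(1)$. For the latter I would apply $Hom_{G\times G}(V_+(\mu)\otimes V_-(\mu'),-)$ to the short exact sequence $0\to W\to V\to V/W\to 0$ and use the $Ext^1$-vanishing for $W$ (which holds since $W$ is costandard, by the forward direction already proved) to transfer condition $(1)$ from $V$ to $V/V_{W}$.

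Finally I would assemble a transfinite or increasing filtration by iterating this construction: having split off one costandard layer, $V/W$ is again restricted and satisfies $(1)$, so one repeats. The main obstacle I anticipate is not any single computation but the bookkeeping needed to guarantee that the resulting (possibly infinite) filtration is exhaustive and separated, i.e. that the union of the submodules built up equals $V$ and their intersection behaves correctly. This is precisely the place where the absence of minimal weights in $\Lambda$ makes the super case delicate, and where the restrictedness condition (no infinite decreasing chains in $\hat V$) must be invoked carefully to ensure the induction on maximal weights actually terminates in exhausting $V$. I would expect this convergence argument, rather than the homological vanishing, to require the most care and to rely on the locally artinian and Grothendieck structure of the category set up in Section 1.
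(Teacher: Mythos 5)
The paper never actually proves this lemma: both the notion of a restricted $G\times G$-supermodule and the statement itself are imported from \cite{z} (note the reference to Remark 5.5 of \cite{z} right after the statement), so your argument has to stand on its own — and its main step does not. You start the hard direction by picking $(\lambda,\lambda')$ \emph{maximal} in ${\hat V}$, but restrictedness only forbids infinite \emph{decreasing} chains in ${\hat V}$, not increasing ones, so a maximal element need not exist. For a strictly increasing chain of weights $(\lambda_1,\lambda_1')<(\lambda_2,\lambda_2')<\cdots$ the supermodule $V=\bigoplus_{i\geq 1}H^0_+(\lambda_i)\otimes H^0_-(\lambda_i')$ is restricted, satisfies (1) and (2) (each summand does, by Proposition 3.1), and visibly has a good filtration, yet ${\hat V}$ has no maximal element, so your induction cannot even begin. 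The ``no infinite decreasing chain'' condition is in the definition precisely to guarantee \emph{minimal} elements; the proof this lemma rests on in \cite{z} — like the paper's own bottom-up machinery in Section 4 (Lemma 4.1, Corollary 4.3, Theorem 4.1) — works from the bottom of the weight poset upwards, not from the top down.

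Second, and independently of existence, your key identification is false. The submodule $W$ generated by the $(\lambda,\lambda')$-weight space is a sum of homomorphic images of the standard object $V_+(\lambda^{\epsilon})\otimes V_-(\lambda')$, hence is controlled by its head, whereas costandard objects are controlled by their socle (the universal property recalled at the end of Section 1). Take $V=H^0_+(\lambda^{\epsilon})\otimes H^0_-(\lambda')$ itself, not simple: it is restricted, satisfies (1) by Proposition 3.1, and $(\lambda,\lambda')$ is maximal in ${\hat V}$; but the $(\lambda,\lambda')$-weight vector lies in the simple socle $L(-\lambda^{\epsilon})\otimes L(\lambda')$ (Lemma 3.4), so $W$ equals that simple socle and is \emph{not} costandard. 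Thus the point you flag as a ``crucial technical point'' is exactly where the argument breaks, and it cannot be repaired in place: after splitting off this $W$, the quotient $V/W$ no longer satisfies (1), so the iteration collapses. A correct argument goes the other way: for $\mu=(\lambda,\lambda')$ \emph{minimal} in ${\hat V}$, every nonzero map $V_+(\lambda^{\epsilon})\otimes V_-(\lambda')\to V$ has simple image (any simple subobject of the image produces an element of ${\hat V}$ not exceeding $\mu$), hence $O_{(\mu]}(V)$ has socle a finite multiple of $L(-\lambda^{\epsilon})\otimes L(\lambda')$ and embeds into a finite direct sum of copies of $H^0_+(\lambda^{\epsilon})\otimes H^0_-(\lambda')$; only at this point is the $Ext^1$-hypothesis used, to show that this embedding is onto and that $V/O_{(\mu]}(V)$ inherits the hypotheses, after which one iterates by noetherian induction on ${\hat V}$. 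None of this appears in your sketch. Finally, a smaller but real error in your easy direction: restrictedness does \emph{not} force the good filtration to be finite — $O_{\Gamma}(K[G])$ in Theorem 5.1 is restricted and its filtration is infinite — so your induction on length must be supplemented by a passage to the limit, using that the standard objects are finite objects so that $Hom$ and $Ext^1$ out of them commute with the relevant unions.
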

If $V$ has a good filtration, then the multiplicity of a factor
$H^0_+(\lambda^{\epsilon})\otimes H^0_-(\lambda')$ is equal to
$\dim Hom_{G\times G}(V_+(\lambda^{\epsilon})\otimes
V_-(\lambda'), V)_0$ (see Remark 5.5 in \cite{z}).
\begin{cor}
Every injective $G\times G$-supermodule has a good filtration with
quotients $H^0_+(\lambda^{\epsilon})\otimes H^0_-(\lambda')$. In
particular, $G\times G$-mod is a highest weight category with
poset $(X(T)^-\times\{0, 1\})\times X(T)^+$ ordered as above.
\end{cor}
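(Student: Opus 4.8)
The plan is to deduce the statement from the good-filtration criterion of Lemma~3.6, whose hypothesis becomes almost free for injective objects: if $I$ is an injective $G\times G$-supermodule then $Ext^i_{G\times G}(-,I)=0$ for all $i\geq 1$, so condition~(1) of Lemma~3.6 holds automatically for every $V_+(\lambda)\otimes V_-(\lambda')$. Consequently the only thing one must verify in order to apply Lemma~3.6 is that $I$ is \emph{restricted}, and the whole argument reduces to a finiteness statement about the associated support set $\hat I$.

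First I would reduce to indecomposable injectives. Since $G\times G$-mod is equivalent to the category of $K[G\times G]$-supercomodules, it is a locally finite Grothendieck category, so every injective object is a direct sum of injective hulls of simple objects. The simple $G\times G$-supermodules are the external tensor products $L(\nu)\otimes L(\nu')$, and using $K[G\times G]=K[G]\otimes K[G]$ together with Lemma~3.3 one checks that the injective hull of $L(\nu)\otimes L(\nu')$ is $I(\nu)\otimes I(\nu')$, where $I(\nu)$ is the injective hull of $L(\nu)$ in $G$-mod. Thus it suffices to treat the modules $I(\nu)\otimes I(\nu')$, because a direct sum of supermodules each admitting a good filtration again admits one (concatenate the filtrations, transfinitely if necessary), with the same class of costandard quotients.

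Next I would prove that $I(\nu)\otimes I(\nu')$ is restricted. By Lemma~3.1 the Weyl objects split as external tensor products, and a K\"unneth-type factorization gives $Hom_{G\times G}(V_+(\lambda)\otimes V_-(\lambda'),I(\nu)\otimes I(\nu'))\simeq Hom_G(V_+(\lambda),I(\nu))\otimes Hom_G(V_-(\lambda'),I(\nu'))$. Since $G$-mod is a highest weight category (\cite{z}), the injective hulls $I(\nu)$ and $I(\nu')$ are restricted $G$-supermodules: each factor above is finite-dimensional and its support set contains no infinite decreasing chain. Hence the support set of $I(\nu)\otimes I(\nu')$ is the product of these two support sets; its multiplicity spaces are finite-dimensional, and a product of two well-founded posets is again well-founded, so it contains no infinite decreasing chain. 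Therefore $I(\nu)\otimes I(\nu')$ is restricted, and Lemma~3.6 yields a good filtration with quotients $H^0_+(\lambda^{\epsilon})\otimes H^0_-(\lambda')$; by the multiplicity remark following Lemma~3.6 the factor $H^0_+(\lambda^{\epsilon})\otimes H^0_-(\lambda')$ occurs $\dim Hom_{G\times G}(V_+(\lambda^{\epsilon})\otimes V_-(\lambda'),I(\nu)\otimes I(\nu'))_0$ times.

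Finally, for the highest weight category assertion I would assemble the data already in place: the simple objects are indexed by $(X(T)^-\times\{0,1\})\times X(T)^+$; the costandard and standard objects are exactly those of Lemmas~3.4 and~3.5; and the good filtration of each injective hull $I(\nu)\otimes I(\nu')$ has as its bottom term the costandard object with socle $L(\nu)\otimes L(\nu')$, the remaining quotients being $H^0_+(\mu^{\epsilon})\otimes H^0_-(\mu')$ of strictly larger highest weight. Together with the finite multiplicities just obtained, this is precisely the definition of a highest weight category with the stated poset and ordering. The main obstacle is the verification of restrictedness: concretely, the reduction of a general injective to the indecomposable pieces $I(\nu)\otimes I(\nu')$, their identification as injective hulls (hence the injectivity over $G\times G$ of a tensor product of injectives), and the well-foundedness of the product support poset. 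Once these are in hand, the $Ext$-vanishing coming from injectivity makes Lemma~3.6 applicable with no further work.
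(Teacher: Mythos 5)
Your proposal is correct, and its core is the same argument the paper intends: injectivity kills all higher $Ext$ groups against the standard objects $V_+(\lambda)\otimes V_-(\lambda')$, so the good-filtration criterion (the lemma immediately preceding the corollary) applies. The paper in fact prints no proof at all, treating the corollary as immediate from that lemma; your write-up makes visible the one point this reading glosses over, namely that the lemma has a \emph{restrictedness} hypothesis which an arbitrary injective does not satisfy (an infinite direct sum of copies of a single indecomposable injective is injective, yet $Hom_{G\times G}(V_+(\lambda)\otimes V_-(\lambda'),-)$ of it is infinite-dimensional, since a map from a finite object into a direct sum factors through finitely many summands). Your reduction to indecomposable injectives --- decomposing any injective as a direct sum of injective hulls of simples in the locally finite comodule category, identifying the hull of $L(\nu^{\epsilon})\otimes L(\nu')$ with $I(\nu^{\epsilon})\otimes I(\nu')$ via Lemma 3.3, and then verifying restrictedness of the latter through the K\"unneth factorization of $Hom$ together with well-foundedness of a product of well-founded posets --- is exactly the missing content, and each step is sound (the finite-dimensionality and well-foundedness of the supports of $I(\nu)$, $I(\nu')$ rest on the highest weight structure of $G$-mod from \cite{z} plus finiteness of intervals in $X(T)^{\pm}$, which the paper records later in Example 4.1). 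Two places where you assert rather than argue are the essentiality claim $soc(I(\nu)\otimes I(\nu'))=L(\nu)\otimes L(\nu')$ and the claim that the good filtration can be arranged with $\nabla$ of the socle weight at the bottom and strictly larger weights above; both follow by standard means (the $Hom$-K\"unneth formula applied to simples computes the socle, and the bottom factor of any good filtration is forced because its simple socle must embed in the simple socle of the injective hull), so they are acceptable gaps of routine detail rather than of substance.
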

\begin{rem}
Let $\delta=(\delta_1,\ldots ,\delta_t)$ be an element of the set
$\{+, - \}^t$ and $G^t=\underbrace{G\times\ldots\times G}_t$.
Extending the above arguments, one can prove that $G^t$-mod is a
highest weight category with respect to a poset $(X(T)^{\delta_1}\times\{0,
1\})\times\ldots\times X(T)^{\delta_t}$; costandard and
standard objects in $G^t$-mod are
$H^0_{\delta_1}(\lambda_1^{\epsilon})\otimes\ldots\otimes
H^0_{\delta_t}(\lambda_t)$ and
$V_{\delta_1}(\lambda_1^{\epsilon})\otimes\ldots\otimes
V_{\delta_t}(\lambda_t)$, respectively.
\end{rem}

\begin{pr}(see Proposition 4.20 in Part II of \cite{jan})
The superalgebra $K[G]$, considered as a $G\times G$-supermodule via
$\rho$, satisfies the following conditions:
\begin{enumerate}
\item $Ext^i_{G\times G}(V_+(\lambda)\otimes V_-(\lambda'), K[G])=0$
for all $i\geq 1$ and $(\lambda, \lambda')\in X(T)^-\times X(T)^+$,
\item $Hom_{G\times G}(V_+(\lambda^{\epsilon})\otimes V_-(\lambda'), K[G])_0\neq 0$
if and only if $\epsilon=0$ and $\lambda=-\lambda'$.
\end{enumerate}
Additionally, $\dim Hom_{G\times G}(V_+(\lambda)\otimes V_-(-\lambda),
K[G])_0 =1$.
\end{pr}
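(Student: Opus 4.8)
The plan is to reduce the $G\times G$-cohomology of $K[G]$ to ordinary $G$-cohomology of a tensor product of two dual Weyl supermodules, and then to apply the orthogonality of standard and costandard objects (Theorem 5.5 of \cite{z}), following closely the pattern already used in the proof of Proposition 3.2.

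First, since $V_+(\lambda)\otimes V_-(\lambda')$ is finite-dimensional, I would rewrite
$$Ext^i_{G\times G}(V_+(\lambda)\otimes V_-(\lambda'), K[G])\simeq H^i(G\times G, V_+(\lambda)^*\otimes V_-(\lambda')^*\otimes K[G]),$$
using Lemma 3.1 to split the dual of the tensor product. Then I would feed this into the Hochschild--Serre spectral sequence of Proposition 2.1(3) attached to the exact sequence $1\to G\to G\times G\to G\to 1$ with kernel $N=G\times 1$ and quotient $Q=1\times G$, exactly as in Proposition 3.2. As an $N$-supermodule, $K[G]$ is the (left) regular representation, hence injective; therefore $V_+(\lambda)^*\otimes K[G]$ is injective over $N$ and $H^m(N,-)$ of the whole module vanishes for $m>0$ (the factor $V_-(\lambda')^*$ is $N$-trivial and may be pulled out). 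The spectral sequence thus collapses onto the row $m=0$ and yields
$$Ext^i_{G\times G}(V_+(\lambda)\otimes V_-(\lambda'),K[G])\simeq H^i\bigl(Q,\,(V_+(\lambda)^*\otimes K[G])^N\otimes V_-(\lambda')^*\bigr).$$

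The crucial step, which I expect to be the main obstacle, is to identify the $Q$-supermodule $(V_+(\lambda)^*\otimes K[G])^N$. Here one must use the tensor identity for the regular representation: evaluation at the identity gives a linear isomorphism $(U\otimes K[G])^N\simeq U$ for any finite-dimensional $U$, but under this isomorphism the residual action of $Q=1\times G$ (right translation on $K[G]$) becomes precisely the \emph{original} $G$-action on $U$, not the trivial one. Verifying this equivariance in the super setting (the analogue of the $G\times G$-isomorphism $K[G]\otimes U\simeq K[G]\otimes U_{triv}$, compatible with both left and right translation, together with the correct Koszul signs) is the delicate point; a naive application of the tensor identity that forgets the residual action produces the wrong answer. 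Once it is established one obtains $(V_+(\lambda)^*\otimes K[G])^N\simeq V_+(\lambda)^*$ as a $G$-supermodule, and hence
$$Ext^i_{G\times G}(V_+(\lambda)\otimes V_-(\lambda'),K[G])\simeq H^i(G, V_+(\lambda)^*\otimes V_-(\lambda')^*),$$
with $G$ acting diagonally on the tensor product.

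Finally, I would convert this $G$-cohomology into an Ext of a standard object into a costandard one for the single Borel $B^+$. By Lemma 3.3 and its symmetric ($+\leftrightarrow-$) version, $V_+(\lambda)^*\simeq H^0_-(-\lambda)$ and $V_-(\lambda')^*\simeq H^0_+(-\lambda')$, both finite-dimensional, so
$$H^i(G, V_+(\lambda)^*\otimes V_-(\lambda')^*)\simeq Ext^i_G\bigl(H^0_-(-\lambda)^*, H^0_+(-\lambda')\bigr)\simeq Ext^i_G\bigl(V_+(\lambda), H^0_+(-\lambda')\bigr),$$
where now $V_+(\lambda)$ is standard and $H^0_+(-\lambda')$ is costandard with respect to the inverse dominant order (note $\lambda,-\lambda'\in X(T)^-$). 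Theorem 5.5 of \cite{z} then shows this vanishes for all $i\geq 1$, which is assertion (1), and for $i=0$ is nonzero exactly when $\lambda=-\lambda'$, in which case it is one-dimensional. Tracking the parity of the highest-weight line as in Proposition 3.2 shows that the even component is nonzero precisely for $\epsilon=0$, giving assertion (2); setting $\lambda'=-\lambda$ and $\epsilon=0$ gives the concluding dimension count $\dim Hom_{G\times G}(V_+(\lambda)\otimes V_-(-\lambda),K[G])_0=1$.
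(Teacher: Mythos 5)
Your proposal is correct and takes essentially the same route as the paper's own proof: the Hochschild--Serre spectral sequence from Proposition 2.1 applied to $1\to G\times 1\to G\times G\to 1\times G\to 1$, collapse of the sequence via injectivity of $K[G]\simeq K[G]_l$ as a $G\times 1$-supermodule, identification of the invariants $(V_+(\lambda^{\epsilon})^*\otimes K[G])^{G\times 1}$ carrying the \emph{original} $G$-action (the paper phrases this exact step as $(H^0_-(-\lambda^{\epsilon})\otimes K[G]_l)^G\simeq Ind^G_G H^0_-(-\lambda^{\epsilon})=H^0_-(-\lambda^{\epsilon})$), and the final reduction to $Ext^i_G(V_+(\lambda^{\epsilon}), H^0_+(-\lambda'))$ settled by Theorem 5.5 of \cite{z} together with the parity count. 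The point you single out as the delicate one is precisely the paper's induction-functor identification, so your argument matches the published proof step for step.
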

\begin{proof} By Proposition 2.1, there is a spectral sequence
$$H^n(1\times G, H^m(G\times 1, H^0_-(-\lambda^{\epsilon})\otimes H^0_+(-\lambda')\otimes K[G]))\Rightarrow
H^{m+n}(G\times G, H^0_-(-\lambda^{\epsilon})\otimes
H^0_+(-\lambda')\otimes K[G]).$$ On the left-hand side, $K[G]$ is
isomorphic to $K[G]_l$ as a $G=G\times 1$-supermodule. Since
$K[G]_l\simeq K[G]$ is injective (see \cite{z}), the spectral sequence
degenerates and yields an isomorphism
$$H^n(1\times G, (H^0_-(-\lambda^{\epsilon})\otimes K[G]_l)^G\otimes
H^0_+(-\lambda'))\simeq Ext^n_{G\times G}(
V_+(\lambda^{\epsilon})\otimes V_-(\lambda'), K[G]).$$ Finally,
$(H^0_-(-\lambda^{\epsilon})\otimes K[G]_l)^G\simeq Ind^G_G
H^0_-(-\lambda^{\epsilon})=H^0_-(-\lambda^{\epsilon})$ and the
space on left is isomorphic to
$$H^n(G, H^0_-(-\lambda^{\epsilon})\otimes H^0_+(-\lambda'))\simeq
Ext^n_G(V_+(\lambda^{\epsilon}), H^0_+(-\lambda')).$$ Thus
$Ext^n_{G\times G}( V_+(\lambda^{\epsilon})\otimes V_-(\lambda'),
K[G])\neq 0$ if and only if $n=0$ and $\lambda=-\lambda'$. In this 
case, $Hom_{G\times G}( V_+(\lambda^{\epsilon})\otimes
V_-(-\lambda), K[G])$ is one-dimensional. This module is even if and only if
$\epsilon=0$.
\end{proof}

\section{Some results about filtrations}

Let $\cal C$ be a highest weight category from Section 1, with a
duality $\tau$. We call $\tau$ a {\em Chevalley duality}. In what
follows we assume that all costandard objects are finite and {\em
Schurian}, that is $End_{\cal C}(\nabla(\lambda))=K$ for every
$\lambda\in\Lambda$. For $\lambda\in\Lambda$, denote by
$(\lambda]$ the (possibly infinite) interval $\{\mu\in\Lambda
|\mu\leq\lambda\}$ and denote by $(\lambda)$ the open interval $\{\mu |\mu <\lambda\}$.
Let $\Gamma\subseteq\Lambda$ and $M\in\cal
C$. We say that $M$ belongs to $\Gamma$ if and only if every composition factor $L(\lambda)$
of $M$ satisfies $\lambda\in\Gamma$. The largest subobject of $N\in \cal C$ that belongs to $\Gamma$ will be denoted
by $O_{\Gamma}(N)$. Symmetrically, $N$ contains a unique minimal
subobject $O^{\Gamma}(N)$ such that $N/O^{\Gamma}(N)$ belongs to
$\Gamma$. For example, $\nabla(\lambda)=O_{(\lambda]}(I(\lambda))$ for 
$\lambda\in\Lambda$. Finally, denote by $[M : L(\lambda)]$ the
supremum of multiplicities of a simple object $L(\lambda)$ in
composition series of all finite subobjects of $M$.

A full subcategory consisting of all objects $M$ with
$O_{\Gamma}(M)=M$ will be denoted by ${\cal C}[\Gamma]$. It is obvious
that $O_{\Gamma}$ is a left exact functor from $\cal C$ to ${\cal
C}[\Gamma]$ and it commutes with direct sums. The functor
$O^{\Gamma} : {\cal C}\to {\cal C}$ also commutes with direct sums,
but it is not right exact in general. In fact, for every exact
sequence $$0\to X\to Y\to Z\to 0,$$ the map $O^{\Gamma}(Y)\to
O^{\Gamma}(Z)$ is an epimorphism and $O^{\Gamma}(X)\subseteq
X\bigcap O^{\Gamma}(Y)$. Nevertheless, it is possible that
$O^{\Gamma}(X)$ is a proper subobject of $X\bigcap O^{\Gamma}(Y)$.

A subset $\Gamma\subseteq\Lambda$ is called an {\it ideal}, if
$\lambda\in \Gamma$ implies $(\lambda]\subseteq\Gamma$. 
If $\Gamma=\bigcup_{1\leq k\leq s}(\lambda_k]$, then 
we say that $\Gamma$ is finitely generated (by the elements
$\lambda_1,\ldots ,\lambda_s$). We assume that every finitely
generated ideal $\Gamma\subseteq\Lambda$ has a decreasing chain of
finitely generated subideals
$\Gamma=\Gamma_0\supseteq\Gamma_1\supseteq\Gamma_2\supseteq\ldots$
such that $\Gamma\setminus\Gamma_k$ is a finite set for all $k\geq
0$ and $\bigcap_{k\geq 0}\Gamma_k=\emptyset$. For example, 
if every $\lambda\in\Lambda$ has finitely many predecessors 
($\mu<\lambda$ such that there is no $\pi\in\Lambda$ between $\lambda$
and $\mu$), then this assumption is satisfied. In this case, for $\Gamma=\bigcup_{1\leq k\leq s}(\lambda_k]$ 
we can put $\Gamma_1=\bigcup_{1\leq k\leq s}(\lambda_k)$ and observe that
$\Gamma\setminus\Gamma_1$ is finite, and $\Gamma_1$ is generated by all predecessors of the elements
$\lambda_1,\ldots ,\lambda_s$.
 
From now on, $\Gamma$ is a finitely
generated ideal, unless stated otherwise. If
$\Gamma=\bigcup_{1\leq j\leq k}(\pi_j]$ and $\lambda\in\Gamma$, then
denote by $(\lambda, \Gamma]$ (and $[\lambda, \Gamma]$, respectively)
the finite set $\bigcup_{1\leq j\leq k}(\lambda , \pi_j]$
(and the finite set $\bigcup_{1\leq j\leq k}[\lambda ,
\pi_j]$, respectively).  

\begin{ex}
The category $G-mod$ satisfies all of the above conditions. In fact,
any $\lambda\in X(T)^+$ has only finitely many predecessors. More precisely, if $\mu$ is a predecessor of $\lambda$ and
$\sum_{1\leq i\leq m}\mu_i < \sum_{1\leq i\leq m}\lambda_i$, then
$\mu\leq \lambda' <\lambda$, where
$\lambda'=(\lambda_1,\ldots,\lambda_{m-1},\lambda_m -1
|\lambda_{m+1}+1,\lambda_{m+2},\ldots,\lambda_{m+n})$. Therefore
either $\mu=\lambda'$ or $\mu_+\leq\lambda_+,
\mu_-\leq\lambda_- $. It remains to observe that for any (ordered)
partition $\pi$, there are only finitely many partitions of the
same length, smaller or equal to $\pi$. Repeating these arguments as
many times as needed, one can prove that $G^t$-mod satisfies all
the above conditions for any root data $(X(T)^{\delta_1}\times
\{0, 1\})\times\ldots\times X(T)^{\delta_t}$. 
\end{ex}
According to \cite{cps}, the subcategory ${\cal C}[\Gamma]$ is a highest weight category with
costandard objects $\nabla(\lambda)$ and finite injective envelopes
$I_{\Gamma}(\lambda)=O_{\Gamma}(I(\lambda))$ for every $\lambda\in\Gamma$. By Theorem 3.9 of \cite{cps}, 
for every $M, N\in {\cal C}[\Gamma]$ we have 
$Ext^i_{\cal C}(M, N)= Ext^i_{{\cal C}[\Gamma]}(M, N)$.

Let $\cal R$ be a class of objects from $\cal C$. An increasing
filtration
$$0=M_0\subseteq M_1\subseteq M_2\subseteq\ldots$$
of an object $M$ such that $M_i/M_{i-1}\in {\cal R}$ for every $i\geq 1,$ and
$\bigcup_{i\geq 1}M_i=M$, is called an increasing ${\cal
R}$-filtration. If $M$ has a decreasing filtration
$$M=M_0\supseteq M_1\supseteq M_2\supseteq\ldots$$
such that $M_i/M_{i+1}\in {\cal R}$ for every $i\geq 0,$ and $\bigcap_{i\geq
0}M_i=0$, then it is called a decreasing ${\cal R}$-filtration. For
example, every injective envelope $I(\lambda)$ has an increasing
$\nabla$-filtration, where $\nabla= \{\nabla(\lambda)
|\lambda\in\Lambda\}$. To finalize our notations, let us denote the
class $\{\Delta(\lambda) |\lambda\in\Lambda\}$ by $\Delta$. Recall
that $L(\lambda)=\Delta(\lambda)/rad \Delta(\lambda)$ and all other
composition factors $L(\mu)$ of $\Delta(\lambda)$ satisfy $\mu
<\lambda$.
\begin{lm}
If $M$ belongs to $\Gamma$ and $Ext^i_{\cal C}(M,\nabla(\lambda))\neq 0$ for some
$i>0$, then there is a composition factor $L(\mu)$ of $M$ such
that $\mu >\lambda$.
\end{lm}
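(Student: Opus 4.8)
The plan is to reduce to the case where $M$ is simple and then run a dimension-shifting induction on $i$ against the good filtration of the injective envelope $I(\lambda)$. The substantive core is the following sub-claim: for all $\mu,\lambda\in\Lambda$ and all $i\geq 1$, if $Ext^i_{\cal C}(L(\mu),\nabla(\lambda))\neq 0$ then $\mu>\lambda$. Granting it, the reduction is a routine d\'evissage when $M$ is finite: choosing a composition series of $M$ and applying the long exact sequence of $Ext_{\cal C}(-,\nabla(\lambda))$ repeatedly, a nonzero $Ext^i_{\cal C}(M,\nabla(\lambda))$ with $i>0$ forces $Ext^i_{\cal C}(L(\mu),\nabla(\lambda))\neq 0$ for some composition factor $L(\mu)$ of $M$, whence $\mu>\lambda$ by the sub-claim.

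To prove the sub-claim I would induct on $i$. Consider the short exact sequence $0\to\nabla(\lambda)\to I(\lambda)\to Q\to 0$ with $Q=I(\lambda)/\nabla(\lambda)$. The injective envelope $I(\lambda)$ carries an increasing $\nabla$-filtration whose bottom section is the unique one containing $soc\,I(\lambda)=L(\lambda)$, namely $\nabla(\lambda)$ (occurring with multiplicity one), all remaining sections being $\nabla(\nu)$ with $\nu>\lambda$; hence $Q$ inherits a $\nabla$-filtration with sections $\nabla(\nu)$, $\nu>\lambda$. Since $I(\lambda)$ is injective, the long exact sequence shows that for $i\geq 1$ the space $Ext^i_{\cal C}(L(\mu),\nabla(\lambda))$ is a quotient of $Ext^{i-1}_{\cal C}(L(\mu),Q)$, so its nonvanishing yields $Ext^{i-1}_{\cal C}(L(\mu),\nabla(\nu))\neq 0$ for some section $\nabla(\nu)$ with $\nu>\lambda$. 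If $i=1$ this is $Hom_{\cal C}(L(\mu),\nabla(\nu))\neq 0$, forcing $L(\mu)=soc\,\nabla(\nu)=L(\nu)$, i.e. $\mu=\nu>\lambda$; if $i>1$ the inductive hypothesis gives $\mu>\nu>\lambda$. In either case $\mu>\lambda$, completing the induction.

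The step I expect to be the main obstacle is the d\'evissage when $M$ is not finite, because $Ext_{\cal C}(-,\nabla(\lambda))$ does not commute with the direct limit $M=\varinjlim_j M_j$ over finite subobjects: one obtains only an inverse-limit comparison, with an a priori nonzero $\varprojlim^1$-type correction coming from the system $\{Hom_{\cal C}(M_j,\nabla(\lambda))\}$. Here the standing hypothesis on $\Gamma$ is what makes things tractable: the finitely generated ideal $\Gamma$ admits a decreasing chain of finitely generated subideals $\Gamma=\Gamma_0\supseteq\Gamma_1\supseteq\cdots$ with each $\Gamma\setminus\Gamma_k$ finite and $\bigcap_k\Gamma_k=\emptyset$, so $M=\bigcup_k P_k$ with $P_k=O_{\Gamma\setminus\Gamma_k}(M)$ a countable increasing filtration. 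For a countable system one has the Milnor sequence
$$0\to\varprojlim^1_k Ext^{i-1}_{\cal C}(P_k,\nabla(\lambda))\to Ext^i_{\cal C}(M,\nabla(\lambda))\to\varprojlim_k Ext^i_{\cal C}(P_k,\nabla(\lambda))\to 0,$$
and I would argue the contrapositive: if no composition factor of $M$ exceeds $\lambda$, then no $P_k$ has such a factor either, so the finite-$M$ case kills all $Ext^i_{\cal C}(P_k,\nabla(\lambda))$ with $i\geq 1$, giving vanishing of the right-hand term, while the $\varprojlim^1$-term vanishes by the Mittag--Leffler condition, which holds automatically since the images of the transition maps of the relevant $Hom$-system form a descending, hence stabilizing, chain. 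Thus the higher $Ext$ against $\nabla(\lambda)$ is detected on the finite layers, and the argument of the first paragraph applies. The finiteness of the intervals $(\lambda,\Gamma]$ and the countable exhaustion of $\Gamma$ are exactly the inputs that defeat the limit obstruction.
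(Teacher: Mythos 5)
Your finite-length core (dimension shifting against an injective with a good filtration, with the base case settled by the simple socle of $\nabla(\nu)$) is sound and close in spirit to the paper's argument, but your reduction of the general case to finite $M$ has a genuine gap. Lemma 4.1 carries no $\Gamma$-restrictedness hypothesis: $M$ merely belongs to the finitely generated ideal $\Gamma$, and the paper really does apply the lemma (and its symmetric variant) to objects of infinite length with possibly infinite multiplicities, e.g.\ to $O_{\Gamma}(M)$ in Corollary 4.2. For such $M$ your subobjects $P_k=O_{\Gamma\setminus\Gamma_k}(M)$ need not be finite: an object whose composition factors lie in the finite set $\Gamma\setminus\Gamma_k$ can still have infinite length (infinite multiplicities, even infinite Loewy length), so composition-series d\'evissage does not apply to $P_k$, and the assertion that ``the finite-$M$ case kills all $Ext^i_{\cal C}(P_k,\nabla(\lambda))$'' is circular --- it is exactly the infinite-length problem you set out to solve. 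The Mittag--Leffler step collapses for the same reason: your justification ``descending, hence stabilizing'' is valid only for chains of finite-dimensional subspaces, whereas $Hom_{\cal C}(P_k,\nabla(\lambda))$ is infinite-dimensional in general (already for $P_k$ an infinite direct sum of copies of $L(\lambda)$ it is an infinite product of copies of $K$), and $\varprojlim^1$ of inverse systems of vector spaces does not vanish automatically. A secondary problem sits inside your sub-claim: $Q=I(\lambda)/\nabla(\lambda)$ has in general an \emph{infinite} increasing $\nabla$-filtration, so extracting a nonzero $Ext^{i-1}_{\cal C}(L(\mu),\nabla(\nu))$ from $Ext^{i-1}_{\cal C}(L(\mu),Q)\neq 0$ silently commutes $Ext$ past a countable direct union in the second variable; that is legitimate in locally noetherian categories (such as comodule categories), but it is not among the paper's axioms, which assume only a locally artinian Grothendieck category.

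The paper's proof is engineered precisely to avoid both issues by never decomposing the first variable. Since $M$ belongs to $\Gamma$, one works in ${\cal C}[\Gamma]$, where the injective envelope $I_{\Gamma}(\lambda)=O_{\Gamma}(I(\lambda))$ is a \emph{finite} object whose good filtration has bottom section $\nabla(\lambda)$ and only finitely many further sections $\nabla(\mu)$ with $\mu\in(\lambda,\Gamma]$. By Theorem 3.9 of \cite{cps}, $Ext^i_{\cal C}(M,I_{\Gamma}(\lambda))=Ext^i_{{\cal C}[\Gamma]}(M,I_{\Gamma}(\lambda))=0$ for $i\geq 1$, so the sequence $0\to\nabla(\lambda)\to I_{\Gamma}(\lambda)\to Q\to 0$ yields a surjection $Ext^{i-1}_{\cal C}(M,Q)\to Ext^i_{\cal C}(M,\nabla(\lambda))$; finite d\'evissage on $Q$ and induction on $i$, with $M$ held fixed and arbitrary throughout, terminate at $Hom_{\cal C}(M,\nabla(\mu))\neq 0$ for some $\mu>\lambda$, and the image of such a map contains $soc\,\nabla(\mu)=L(\mu)$, which is therefore a composition factor of $M$. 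To repair your write-up, adopt this truncation: replace $I(\lambda)$ by $I_{\Gamma}(\lambda)$, keep all d\'evissage in the second variable (where everything is finite), and drop the reduction of $M$ to its composition factors altogether.
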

\begin{proof} Without a loss of generality one can suppose that
$\lambda\in\Gamma$. Consider a short exact sequence
$$0\to\nabla(\lambda)\to I_{\Gamma}(\lambda)\to Q\to 0 ,$$ where $Q$ has a finite $\nabla$-filtration with quotients
$\nabla(\mu)$, where $\mu\in(\lambda, \Gamma]$. A fragment of a long exact sequence
$$\ldots \to Ext^{i-1}_{\cal C}(M, Q)\to Ext^i_{\cal C}(M, \nabla(\lambda))\to 0$$
shows that $Ext^{i-1}_{\cal C}(M, Q)\neq 0$. In particular, $Ext^{i-1}_{\cal C}(M, \nabla(\mu))$ is nonzero for a subquotient $\nabla(\mu)$ of 
a $\nabla$-filtration of $Q$. We use an induction on $i$ to 
show that $Hom_{\cal C}(M, \nabla(\mu))\neq 0$ for some $\mu
>\lambda$.
\end{proof}
\bigskip

For every $\lambda\in \Lambda$, denote $P_{\Gamma}(\lambda)=\tau(I_{\Gamma}(\lambda))$. 
Then $P_{\Gamma}(\lambda)$ is a projective cover of $L(\lambda)$ in ${\cal C}[\Gamma]$ and
$P_{\Gamma}(\lambda)$ has a $\Delta$-filtration which is Chevalley dual to the corresponding
$\nabla$-filtration of $I_{\Gamma}(\lambda)$ for $\lambda\in\Gamma$. The
following lemma is a symmetric variant of Lemma 4.1.
\begin{lm}
Let $M\in {\cal C}[\Gamma]$. If $Ext^i_{\cal C}(\Delta(\lambda),
M)\neq 0$ for some $i
> 0$, then there is a composition factor $L(\mu)$ of $M$ such that $\mu
>\lambda$.
\end{lm}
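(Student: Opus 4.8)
The plan is to transcribe the proof of Lemma 4.1, interchanging the roles of standard and costandard objects and applying $Hom_{\cal C}(-, M)$ in the first variable instead of $Hom_{\cal C}(M, -)$ in the second. The one new ingredient is the projective cover $P_{\Gamma}(\lambda)=\tau(I_{\Gamma}(\lambda))$ of $L(\lambda)$ in ${\cal C}[\Gamma]$, which, as recorded just above the statement, carries a $\Delta$-filtration Chevalley-dual to the $\nabla$-filtration of $I_{\Gamma}(\lambda)$.

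As in Lemma 4.1, I would first reduce to the case $\lambda\in\Gamma$ by enlarging $\Gamma$ to the finitely generated ideal $\Gamma\cup(\lambda]$; this changes neither the hypothesis nor the conclusion, since $M$ still belongs to the enlarged ideal and all $Ext$ groups are computed in ${\cal C}$ (Theorem 3.9 of \cite{cps}). I would then apply $\tau$ to the short exact sequence $0\to\nabla(\lambda)\to I_{\Gamma}(\lambda)\to Q\to 0$ from the proof of Lemma 4.1. Because $\tau$ is an exact contravariant duality sending each $\nabla(\mu)$ to $\Delta(\mu)$ and $I_{\Gamma}(\lambda)$ to $P_{\Gamma}(\lambda)$, this yields a short exact sequence
$$0\to P\to P_{\Gamma}(\lambda)\to\Delta(\lambda)\to 0,$$
where $P=\tau(Q)$ has a finite $\Delta$-filtration with quotients $\Delta(\mu)$ for $\mu\in(\lambda,\Gamma]$; in particular every such $\mu$ satisfies $\mu>\lambda$.

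Applying $Hom_{\cal C}(-, M)$ to this sequence gives a long exact sequence, and since $P_{\Gamma}(\lambda)$ is projective in ${\cal C}[\Gamma]$ and $M\in{\cal C}[\Gamma]$ we have $Ext^i_{\cal C}(P_{\Gamma}(\lambda), M)=Ext^i_{{\cal C}[\Gamma]}(P_{\Gamma}(\lambda), M)=0$ for all $i>0$. The fragment
$$\ldots\to Ext^{i-1}_{\cal C}(P, M)\to Ext^i_{\cal C}(\Delta(\lambda), M)\to 0$$
then forces $Ext^{i-1}_{\cal C}(P, M)\neq 0$, and examining the long exact sequences attached to the $\Delta$-filtration of $P$ shows that $Ext^{i-1}_{\cal C}(\Delta(\mu), M)\neq 0$ for some filtration quotient $\Delta(\mu)$ with $\mu>\lambda$.

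Finally I would close by induction on $i$. For $i=1$ the term $Ext^{i-1}_{\cal C}(\Delta(\mu), M)=Hom_{\cal C}(\Delta(\mu), M)$ is nonzero; since $\Delta(\mu)$ has simple top $L(\mu)$, the image of any nonzero morphism $\Delta(\mu)\to M$ is a nonzero quotient of $\Delta(\mu)$ and hence has $L(\mu)$ among its composition factors, so $L(\mu)$ is a composition factor of $M$ with $\mu>\lambda$. For $i>1$ the nonvanishing of $Ext^{i-1}_{\cal C}(\Delta(\mu), M)$ with $i-1>0$ invokes the inductive hypothesis to produce a composition factor $L(\nu)$ of $M$ with $\nu>\mu>\lambda$. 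I expect the only point requiring real care to be the dual filtration of $P$: one must check that $\tau$ turns the $\nabla$-filtration of $I_{\Gamma}(\lambda)$, whose bottom subobject is exactly $\nabla(\lambda)$, into a $\Delta$-filtration of $P_{\Gamma}(\lambda)$ whose top quotient is exactly $\Delta(\lambda)$, so that the kernel $P$ involves only indices strictly above $\lambda$. Everything else is a formal mirror of Lemma 4.1.
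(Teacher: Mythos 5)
Your proof is correct and is exactly the ``symmetric variant'' of Lemma 4.1 that the paper intends (the paper gives no separate proof of this lemma, but carries out the same argument in its proof of Lemma 4.3): form the short exact sequence $0\to Q\to P_{\Gamma}(\lambda)\to\Delta(\lambda)\to 0$ dual to the $\nabla$-filtration of $I_{\Gamma}(\lambda)$, use projectivity of $P_{\Gamma}(\lambda)$ in ${\cal C}[\Gamma]$ together with Theorem 3.9 of \cite{cps}, and induct on $i$. You merely spell out details the paper leaves implicit, namely the reduction to $\lambda\in\Gamma$, the passage from $Ext^{i-1}_{\cal C}(Q,M)\neq 0$ to a single filtration quotient $\Delta(\mu)$, and the base case via the simple top of $\Delta(\mu)$.
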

\begin{cor}(compare with Theorem 3.11 of \cite{cps})
For every $\lambda, \mu\in\Lambda$ and $i > 0$, we have $Ext^i_{\cal
C}(\Delta(\lambda), \nabla(\mu))=0$.
\end{cor}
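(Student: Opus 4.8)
The plan is to assume a higher extension is nonzero and then apply Lemmas 4.1 and 4.2 simultaneously, using that their conclusions constrain the pair $(\lambda,\mu)$ in opposite directions and so cannot hold together. So suppose, for contradiction, that $Ext^i_{\cal C}(\Delta(\lambda),\nabla(\mu))\neq 0$ for some $i>0$.

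First I would record what is already known about the two objects: the head of $\Delta(\lambda)$ is $L(\lambda)$ and every other composition factor $L(\nu)$ of $\Delta(\lambda)$ has $\nu<\lambda$, so all its factors satisfy $\nu\leq\lambda$; dually, the socle of $\nabla(\mu)$ is $L(\mu)$ and all its composition factors satisfy $\nu\leq\mu$. Both objects are finite, and their composition factors are indexed by elements of the finitely generated ideal $\Gamma=(\lambda]\cup(\mu]$; thus $\Delta(\lambda)$ belongs to $\Gamma$, $\nabla(\mu)\in{\cal C}[\Gamma]$, and $\lambda,\mu\in\Gamma$. By Theorem 3.9 of \cite{cps} the groups $Ext^i_{\cal C}$ agree with those computed in ${\cal C}[\Gamma]$, so all hypotheses of the two lemmas are in force.

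Next I would apply Lemma 4.1 to $M=\Delta(\lambda)$ with costandard target $\nabla(\mu)$: since $\Delta(\lambda)$ belongs to $\Gamma$ and the extension is nonzero in degree $i>0$, the lemma furnishes a composition factor $L(\nu)$ of $\Delta(\lambda)$ with $\nu>\mu$; together with $\nu\leq\lambda$ this yields $\mu<\lambda$. Symmetrically, I would apply Lemma 4.2 to the standard object $\Delta(\lambda)$ with $M=\nabla(\mu)\in{\cal C}[\Gamma]$: the nonzero extension produces a composition factor $L(\nu)$ of $\nabla(\mu)$ with $\nu>\lambda$, and with $\nu\leq\mu$ this yields $\lambda<\mu$. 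The two inequalities $\mu<\lambda$ and $\lambda<\mu$ are incompatible in the partial order $(\Lambda,\leq)$, the desired contradiction; hence $Ext^i_{\cal C}(\Delta(\lambda),\nabla(\mu))=0$ for every $i>0$.

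I expect no real obstacle here, since Lemmas 4.1 and 4.2 already carry out all the homological work and the argument reduces to the observation that their conclusions push $\lambda$ and $\mu$ in opposite directions. The only points deserving care are the bookkeeping with $\Gamma$, namely that $\Delta(\lambda)$ and $\nabla(\mu)$ sit inside a single finitely generated truncation ${\cal C}[\Gamma]$ containing both indices $\lambda$ and $\mu$ (this is what legitimately activates both lemmas), and the compatibility of the $Ext$-groups computed in $\cal C$ and in ${\cal C}[\Gamma]$, which is exactly Theorem 3.9 of \cite{cps}.
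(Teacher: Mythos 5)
Your proof is correct and is exactly the argument the paper intends: the corollary is stated right after Lemmas 4.1 and 4.2 precisely so that one applies Lemma 4.1 to $M=\Delta(\lambda)$ (forcing $\mu<\lambda$) and Lemma 4.2 to $M=\nabla(\mu)$ (forcing $\lambda<\mu$), which is incompatible. Your bookkeeping with the finitely generated ideal $\Gamma=(\lambda]\cup(\mu]$ and the citation of Theorem 3.9 of \cite{cps} are sound (the latter is not even strictly needed, since both lemmas are stated for $Ext^i_{\cal C}$ directly).
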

\begin{cor}
Let $M\in {\cal C}$. If $Ext^1_{\cal C}(\Delta(\lambda), M)=0$
(or $Ext^1_{\cal C}(M, \nabla(\lambda))=0$, respectively) for all
$\lambda\in\Lambda$, then 
$Ext^1_{\cal C}(\Delta(\lambda), O_{\Gamma}(M))=0$
(or $Ext^1_{\cal C}(M/O^{\Gamma}(M), \nabla(\lambda))=0$, respectively)
for all $\lambda\in\Lambda$.
\end{cor}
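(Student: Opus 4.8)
The plan is to prove both (mutually Chevalley-dual) assertions by the same scheme, splitting $\Lambda$ into the two cases $\lambda\in\Gamma$ and $\lambda\notin\Gamma$. I treat the first assertion, about $\Delta(\lambda)$ and $O_{\Gamma}(M)$, and obtain the second by the symmetry between Lemma 4.1 and Lemma 4.2 (replacing $\Delta$ by $\nabla$ and $O_{\Gamma}$ by $O^{\Gamma}$, and applying $Hom_{\cal C}(-,\nabla(\lambda))$ to the sequence $0\to O^{\Gamma}(M)\to M\to M/O^{\Gamma}(M)\to 0$). Note first that $O_{\Gamma}(M)$ belongs to $\Gamma$, so $O_{\Gamma}(M)\in {\cal C}[\Gamma]$, and dually $M/O^{\Gamma}(M)\in{\cal C}[\Gamma]$.

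\emph{Case $\lambda\notin\Gamma$.} Here I expect to use no hypothesis at all. If $Ext^1_{\cal C}(\Delta(\lambda), O_{\Gamma}(M))\neq 0$, then, since $O_{\Gamma}(M)\in{\cal C}[\Gamma]$, Lemma 4.2 produces a composition factor $L(\mu)$ of $O_{\Gamma}(M)$ with $\mu>\lambda$. But every composition factor of $O_{\Gamma}(M)$ has its weight in $\Gamma$, so $\mu\in\Gamma$; as $\Gamma$ is an ideal and $\lambda<\mu$, this forces $\lambda\in\Gamma$, a contradiction. Hence $Ext^1_{\cal C}(\Delta(\lambda),O_{\Gamma}(M))=0$. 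For the dual statement one argues identically with Lemma 4.1 applied to $M/O^{\Gamma}(M)\in{\cal C}[\Gamma]$.

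\emph{Case $\lambda\in\Gamma$.} Now the hypothesis enters, through the long exact sequence obtained by applying $Hom_{\cal C}(\Delta(\lambda),-)$ to $0\to O_{\Gamma}(M)\to M\to M/O_{\Gamma}(M)\to 0$:
$$\cdots\to Hom_{\cal C}(\Delta(\lambda), M/O_{\Gamma}(M))\to Ext^1_{\cal C}(\Delta(\lambda), O_{\Gamma}(M))\to Ext^1_{\cal C}(\Delta(\lambda), M)\to\cdots.$$
The right-hand term vanishes by hypothesis, so $Ext^1_{\cal C}(\Delta(\lambda), O_{\Gamma}(M))$ is a quotient of $Hom_{\cal C}(\Delta(\lambda), M/O_{\Gamma}(M))$, and it suffices to show this Hom is zero. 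The key structural observation is that $M/O_{\Gamma}(M)$ has no nonzero subobject belonging to $\Gamma$: the preimage in $M$ of such a subobject would be an extension of two objects belonging to $\Gamma$, hence belong to $\Gamma$, hence be contained in $O_{\Gamma}(M)$ by maximality. Since $\lambda\in\Gamma$ and $\Gamma$ is an ideal, $\Delta(\lambda)$ belongs to $\Gamma$, so the image of any map $\Delta(\lambda)\to M/O_{\Gamma}(M)$ is a subobject of $M/O_{\Gamma}(M)$ belonging to $\Gamma$, thus zero. Therefore the Hom vanishes and $Ext^1_{\cal C}(\Delta(\lambda), O_{\Gamma}(M))=0$. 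The dual statement is handled by the symmetric observation that $O^{\Gamma}(M)$ has no nonzero quotient belonging to $\Gamma$, which gives $Hom_{\cal C}(O^{\Gamma}(M), \nabla(\lambda))=0$ for $\lambda\in\Gamma$.

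The argument is essentially bookkeeping of weights, and I expect no serious obstacle; the only point demanding care is the case split. The hypothesis is genuinely needed solely for $\lambda\in\Gamma$, where Lemma 4.2 alone cannot conclude (a factor $L(\mu)$ with $\mu>\lambda$ lying in $\Gamma$ yields no contradiction), whereas the case $\lambda\notin\Gamma$ is forced purely by Lemmas 4.1 and 4.2. Getting the two structural facts right — that $M/O_{\Gamma}(M)$ has no nonzero subobject in $\Gamma$ and, dually, that $O^{\Gamma}(M)$ has no nonzero quotient in $\Gamma$ — is the one place where the definitions of $O_{\Gamma}$ and $O^{\Gamma}$ must be invoked carefully.
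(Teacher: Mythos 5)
Your proof is correct and takes essentially the same route as the paper's: the paper's opening observation that $Ext^1_{\cal C}(\Delta(\lambda), O_{\Gamma}(M))\neq 0$ forces $\lambda\in\Gamma$ is exactly your case split via Lemma 4.2, and the rest is the same long exact sequence fragment $Hom_{\cal C}(\Delta(\lambda), M/O_{\Gamma}(M))\to Ext^1_{\cal C}(\Delta(\lambda), O_{\Gamma}(M))\to 0$ together with the vanishing of the $Hom$ term. You merely make explicit the extension-closure and maximality arguments (and the dual bookkeeping for $O^{\Gamma}(M)$) that the paper leaves to the reader.
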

\begin{proof} If $Ext^1_{\cal C}(\Delta(\lambda), O_{\Gamma}(M))\neq 0$,
then $\lambda\in\Gamma$. It remains to consider the following
fragment of a long exact sequence
$$Hom_{\cal C}(\Delta(\lambda), M/O_{\Gamma}(M))\to
Ext^1_{\cal C}(\Delta(\lambda), O_{\Gamma}(M))\to 0$$ and observe
that $Hom_{\cal C}(\Delta(\lambda), M/O_{\Gamma}(M))=0$.
The proof of the second statement is analogous.
\end{proof} 
\bigskip

An object $M\in {\cal C}[\Gamma]$ is called $\Gamma$-restricted if and only if
$[M : L(\lambda)]$ is finite for every $\lambda\in\Gamma$. 
\begin{lm}
 Let $M$ be a $\Gamma$-restricted object such that
$Ext^1_{\cal C}(\Delta(\lambda), M)=0$ (or $Ext^1_{\cal
C}(M, \nabla(\lambda))=0$, respectively) for every $\lambda\in\Lambda$. Then for every
$\lambda\in\Lambda$ and $i > 1$ we have $Ext^i_{\cal
C}(\Delta(\lambda), M)=0$ (or $Ext^i_{\cal C}(M,
\nabla(\lambda))=0$, respectively).
\end{lm}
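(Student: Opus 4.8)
The plan is to prove the statement for $Ext^i_{\cal C}(\Delta(\lambda), M)$; the parenthetical variant for $Ext^i_{\cal C}(M, \nabla(\lambda))$ then follows by the Chevalley-dual argument, interchanging $\Delta$ with $\nabla$, the projective covers $P_\Gamma$ with the injective envelopes $I_\Gamma$, and Lemma 4.2 with Lemma 4.1 (using the same subobject $O_{\Gamma^-}(M)$ introduced below). Fix $\lambda$. Since both hypothesis and conclusion are quantified over all weights, I may replace $\Gamma$ by $\Gamma\cup(\lambda]$ and so assume $\lambda\in\Gamma$; this keeps $M$ both $\Gamma$-restricted and in ${\cal C}[\Gamma]$, and does not affect the Ext-groups in question.

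First I would truncate $M$ to a finite object. Put $\Gamma^-=\{\mu\in\Gamma\mid\mu\not>\lambda\}$. One checks that $\Gamma^-$ is again an ideal and that $\Gamma\setminus\Gamma^-=(\lambda,\Gamma]$ is finite; hence $\bar M:=M/O_{\Gamma^-}(M)$ belongs to $(\lambda,\Gamma]$ and, as $M$ is $\Gamma$-restricted, has finite length with all composition factors $L(\mu)$ satisfying $\mu>\lambda$ (we may assume $\bar M\neq 0$). The subobject $O_{\Gamma^-}(M)$ has all composition factors in $\Gamma^-$, so none of them exceeds any $\nu\geq\lambda$; Lemma 4.2 then gives $Ext^i_{\cal C}(\Delta(\nu),O_{\Gamma^-}(M))=0$ for all $i>0$ and all $\nu\geq\lambda$. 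Feeding the short exact sequence $0\to O_{\Gamma^-}(M)\to M\to\bar M\to 0$ into the long exact sequence of $Hom_{\cal C}(\Delta(\nu),-)$ yields $Ext^i_{\cal C}(\Delta(\nu),M)\simeq Ext^i_{\cal C}(\Delta(\nu),\bar M)$ for every $i\geq 1$ and every $\nu\geq\lambda$. Two consequences matter: the goal reduces to showing $Ext^i_{\cal C}(\Delta(\lambda),\bar M)=0$ for $i\geq 2$, and the full hypothesis transports, namely $Ext^1_{\cal C}(\Delta(\nu),\bar M)=0$ for every $\nu\geq\lambda$.

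The remaining computation is a Noetherian induction, downward on the finite poset $[\lambda,\Gamma]$. For $\nu\in[\lambda,\Gamma]$ I would use the presentation $0\to U_\nu\to P_\Gamma(\nu)\to\Delta(\nu)\to 0$ in ${\cal C}[\Gamma]$, where $P_\Gamma(\nu)=\tau(I_\Gamma(\nu))$ is the projective cover of $L(\nu)$ and $U_\nu$ carries a $\Delta$-filtration whose sections are $\Delta(\nu')$ with $\nu'\in\Gamma$ and $\nu'>\nu$; since $\nu\geq\lambda$, every such $\nu'$ lies in $(\lambda,\Gamma]\subseteq[\lambda,\Gamma]$, so the recursion never leaves the poset. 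Because $P_\Gamma(\nu)$ is projective and $Ext_{\cal C}=Ext_{{\cal C}[\Gamma]}$, dimension shifting gives $Ext^i_{\cal C}(\Delta(\nu),\bar M)\simeq Ext^{i-1}_{\cal C}(U_\nu,\bar M)$ for $i\geq 2$, and the latter is assembled from the groups $Ext^{i-1}_{\cal C}(\Delta(\nu'),\bar M)$ with $\nu'>\nu$. For $i\geq 3$ these vanish by the downward induction (as $i-1\geq 2$), and for $i=2$ they are $Ext^1_{\cal C}(\Delta(\nu'),\bar M)$, which vanish by the transported hypothesis; the base case ($\nu$ maximal in $[\lambda,\Gamma]$, where $U_\nu=0$ and $P_\Gamma(\nu)=\Delta(\nu)$ is projective) is immediate. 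Taking $\nu=\lambda$ completes the proof. The hard part is exactly what this organization circumvents: a naive induction on $i$, shifting through an injective coresolution of $M$, fails because the dimension shift destroys the hypothesis $Ext^1_{\cal C}(\Delta(-),?)=0$. Cutting down to the finite, strictly higher-weight object $\bar M$ is what lets the $i=2$ layer borrow the ``for all $\nu$'' strength of the hypothesis, after which the projective covers propagate the vanishing upward in $i$ and downward in the finite weight poset.
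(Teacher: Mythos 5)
Your dimension-shifting engine is exactly the paper's: the paper assumes $\lambda\in\Gamma$, takes the same short exact sequence $0\to Q\to P_{\Gamma}(\lambda)\to\Delta(\lambda)\to 0$ in ${\cal C}[\Gamma]$, notes that $Q$ has a finite $\Delta$-filtration with sections $\Delta(\mu)$, $\mu\in(\lambda,\Gamma]$, and then runs a plain induction on $i$: the base case $i=2$ uses $Ext^1_{\cal C}(\Delta(\mu),M)=0$ for all $\mu$, and the step uses the inductive vanishing of $Ext^{i-1}_{\cal C}(\Delta(\mu),M)$ for all $\mu$. Your downward induction over the finite poset $[\lambda,\Gamma]$ is the same argument organized by weight instead of by homological degree, and it is valid --- but it can be run on $M$ itself, because the hypothesis is already quantified over all $\nu\in\Lambda$; no truncation is needed (and the finite-multiplicity part of $\Gamma$-restrictedness is never needed either, only $M\in{\cal C}[\Gamma]$).

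The truncation step you add contains a genuine error, although a harmless one. It is not true that $\bar M=M/O_{\Gamma^-}(M)$ belongs to $(\lambda,\Gamma]$, nor that it has finite length: passing to the quotient by the largest subobject belonging to an ideal only forces the \emph{socle} of the quotient to avoid that ideal, not its deeper composition factors. For a counterexample take $M=\nabla(\nu)$ non-simple, let $\lambda$ be the weight of any composition factor of $\nabla(\nu)$ other than its socle, and $\Gamma=(\nu]$; then $M$ satisfies the hypothesis of the lemma by Corollary 4.1 and $\nu>\lambda$, but $O_{\Gamma^-}(M)=0$ because every nonzero subobject of $\nabla(\nu)$ contains the socle $L(\nu)$ and $\nu\notin\Gamma^-$, so $\bar M=M$ still has the composition factor $L(\lambda)\notin(\lambda,\Gamma]$. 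Your proof survives only because this claim is never actually used: the two facts you extract --- $Ext^i_{\cal C}(\Delta(\nu),M)\simeq Ext^i_{\cal C}(\Delta(\nu),\bar M)$ for $i\geq 1$, $\nu\geq\lambda$, and the transported $Ext^1$-vanishing --- follow correctly from Lemma 4.2 applied to $O_{\Gamma^-}(M)$. Accordingly, your closing diagnosis should be discarded: cutting down to $\bar M$ is not what lets the $i=2$ layer use the ``for all $\nu$'' strength of the hypothesis (that strength is available for $M$ directly), and the ``naive induction on $i$'' you dismiss is, when performed through the projective presentation of $\Delta(\lambda)$ rather than through an injective coresolution of $M$, precisely the paper's proof.
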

\begin{proof} As in Lemma 4.1, one can assume that $\lambda\in\Gamma$ and work in the
category ${\cal C}[\Gamma]$. A short exact sequence
$$0\to Q\to P_{\Gamma}(\lambda)\to \Delta(\lambda)\to 0$$
induces
$$\ldots\to Ext^{i-1}_{\cal C}(Q, M)\to Ext^i_{\cal C}(\Delta(\lambda), M)\to 0 .$$
Since $Q$ has a finite $\Delta$-filtration with factors $\Delta(\mu)$,
where $\mu\in (\lambda, \Gamma]$, one can argue by an induction on $i$. The proof of the second statement is similar.
\end{proof}

\begin{ter}
Let $M$ be a $\Gamma$-restricted object. If $Ext^1_{\cal
C}(\Delta(\lambda), M)=0$ for all $\lambda\in\Lambda$, then $M$ has
a decreasing $\nabla$-filtration. If $Ext^1_{\cal C}(M, \nabla(\lambda))=0$ for all
$\lambda\in\Lambda$, then $M$ has an increasing $\Delta$-filtration.
\end{ter}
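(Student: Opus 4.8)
The plan is to prove the first assertion (the decreasing $\nabla$-filtration) in detail; the second is obtained by the mirror-image argument sketched at the end. Since $M$ is $\Gamma$-restricted it belongs to $\Gamma$, so $M\in{\cal C}[\Gamma]$ and by Theorem 3.9 of \cite{cps} every $Ext$-group below may be computed in ${\cal C}[\Gamma]$. First I would upgrade the hypothesis: by Lemma 4.3 the vanishing $Ext^1_{\cal C}(\Delta(\lambda),M)=0$ for all $\lambda$ already forces $Ext^i_{\cal C}(\Delta(\lambda),M)=0$ for all $i\geq 1$ and all $\lambda$. It is this stronger vanishing that propagates through the construction.

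Next I would cut $M$ into finite pieces using the standing assumption on $\Gamma$. Choose a chain of finitely generated subideals $\Gamma=\Gamma_0\supseteq\Gamma_1\supseteq\cdots$ with $\Gamma\setminus\Gamma_k$ finite and $\bigcap_k\Gamma_k=\emptyset$, and set $M_k=O_{\Gamma_k}(M)$. Then $M_0=M$ and $M_k\supseteq M_{k+1}$; moreover $\bigcap_k M_k$ is a subobject all of whose composition factors lie in every $\Gamma_k$, hence in $\bigcap_k\Gamma_k=\emptyset$, so $\bigcap_k M_k=0$. Since $M_{k+1}=O_{\Gamma_{k+1}}(M_k)$, a short argument on the socle of $Q_k:=M_k/M_{k+1}$ shows that every composition factor of $Q_k$ lies in the finite set $\Gamma_k\setminus\Gamma_{k+1}$; together with $[M:L(\nu)]<\infty$ this makes each $Q_k$ of finite length. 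Applying Corollary 4.2 to the ideal $\Gamma_k$ gives $Ext^1_{\cal C}(\Delta(\lambda),M_k)=0$ for all $\lambda$, and, as $M_k$ is $\Gamma_k$-restricted, Lemma 4.3 promotes this to $Ext^i_{\cal C}(\Delta(\lambda),M_k)=0$ for all $i\geq 1$. Feeding $0\to M_{k+1}\to M_k\to Q_k\to 0$ into the long exact sequence of $Ext^\bullet_{\cal C}(\Delta(\lambda),-)$ and using that both neighbouring terms vanish in positive degrees then yields $Ext^i_{\cal C}(\Delta(\lambda),Q_k)=0$ for all $i\geq 1$ and all $\lambda$.

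It then remains to refine each finite layer $Q_k$ into a (finite) $\nabla$-filtration, which I would do by induction on length. Since $Q_k$ has finite length its support is a finite subset of $\Gamma$ and hence has a maximal element $\mu$; using that $\nabla(\mu)$ is Schurian with simple socle $L(\mu)$, the orthogonality $\dim Hom_{\cal C}(\Delta(\lambda),\nabla(\mu))=\delta_{\lambda\mu}$, and $Ext^1_{\cal C}(\Delta(\lambda),\nabla(\mu))=0$ (Corollary 4.1), I would split off from $Q_k$ a subobject that is a finite direct sum of copies of $\nabla(\mu)$ (the $\nabla(\mu)$-isotypic part of the truncation $O_{(\mu]}(Q_k)$), whose complementary quotient again annihilates $Ext^i_{\cal C}(\Delta(-),-)$ in positive degrees, so the induction continues.

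Inserting the finite $\nabla$-filtration of each $Q_k$ into the chain $M=M_0\supseteq M_1\supseteq\cdots$, read in the decreasing direction, produces a decreasing filtration of $M$ all of whose successive quotients are costandard objects; since $\bigcap_k M_k=0$ the total intersection is $0$, giving the desired decreasing $\nabla$-filtration. For the second assertion I would run the mirror-image argument with $M^k=O^{\Gamma_k}(M)$, which gives an increasing filtration with $\bigcup_k M^k=M$ and finite-length layers $M^{k+1}/M^k$, using the $\nabla$-side of Corollary 4.2 and the second part of Lemma 4.3 and peeling $\Delta$'s from the bottom of each layer. I expect the main obstacle to be precisely the finite base case: because a finite subset of $\Lambda$ need not lie in any finite ideal, one cannot simply invoke the classical finite-highest-weight-category criterion, and one must check directly that the positive-degree vanishing of $Ext^\bullet_{\cal C}(\Delta(-),Q_k)$ forces $O_{(\mu]}(Q_k)$ to be an honest multiple of $\nabla(\mu)$ and that the complementary quotient inherits the vanishing. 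This is where the Chevalley duality, the Schurian hypothesis, and $Ext^1_{\cal C}(\Delta(\lambda),\nabla(\mu))=0$ must be combined; everything else reduces to the standing finiteness properties of $\Gamma$ that guarantee $\bigcap_k M_k=0$.
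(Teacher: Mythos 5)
Your overall architecture coincides with the paper's: filter $M$ by $M_k=O_{\Gamma_k}(M)$, show the layers are finite and inherit the vanishing of $Ext^{i}_{\cal C}(\Delta(\lambda),-)$ for $i\geq 1$, settle each finite layer by the standard induction, and splice. Your derivation of the vanishing (Corollary 4.2 applied to $M_k$ and $M_{k+1}$, then Lemma 4.3, then the long exact sequence for $0\to M_{k+1}\to M_k\to Q_k\to 0$) is a legitimate, even slightly cleaner, repackaging of the paper's argument, which works with $M/M_k$ and the identification $M_{k-1}/M_k=O_{\Gamma_{k-1}}(M/M_k)$. However, there is one genuine error, and it sits under the one ingredient everything else depends on: the claim that \emph{every composition factor of $Q_k=M_k/M_{k+1}$ lies in the finite set $\Gamma_k\setminus\Gamma_{k+1}$} is false, and it is the only support you offer for the finiteness of the layers. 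Only the socle of $Q_k$ is controlled in this way. Concretely, take $M=\nabla(\mu)$ with $\nabla(\mu)$ non-simple, say with a composition factor $L(\nu)$, $\nu<\mu$ (such $\mu$ abound for $GL(m|n)$); this $M$ satisfies the hypothesis by Corollary 4.1. Take $\Gamma=(\mu]$ and the canonical chain with $\Gamma_1$ generated by the predecessors of $\mu$, so that $\Gamma\setminus\Gamma_1=\{\mu\}$. Since every nonzero subobject of $\nabla(\mu)$ contains the socle $L(\mu)$ and $\mu\notin\Gamma_1$, we get $M_1=O_{\Gamma_1}(\nabla(\mu))=0$, hence $Q_0=\nabla(\mu)$ has the composition factor $L(\nu)$ with $\nu\in\Gamma_1$. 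So the support of a layer is not confined to $\Gamma_k\setminus\Gamma_{k+1}$, and ``finite support plus finite multiplicities'' cannot be the source of finiteness.

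The gap is local and repairable, by the argument the paper actually uses: the socle of $Q_k$ does belong to $\Gamma_k\setminus\Gamma_{k+1}$ (a simple subobject with weight in $\Gamma_{k+1}$ would pull back to a subobject of $M_k$ belonging to $\Gamma_{k+1}$, hence contained in $M_{k+1}$); this socle is finite because $\Gamma_k\setminus\Gamma_{k+1}$ is finite and $M$ is $\Gamma$-restricted; therefore $Q_k$ embeds into the injective envelope of its socle in ${\cal C}[\Gamma_k]$, which is a finite direct sum of the objects $I_{\Gamma_k}(\lambda)$, each finite because $\Gamma_k$ is finitely generated. With this substitution your proof of the first statement goes through. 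Two further remarks. First, your worry about the finite base case is fair --- the paper merely cites Jantzen and Dlab--Ringel --- but the obstruction you fear is resolved by working in ${\cal C}[\Gamma']$ for the finitely generated ideal $\Gamma'$ generated by the (finite) support of $Q_k$, where finite projective covers with finite $\Delta$-filtrations, the Schurian property, and Corollary 4.1 are all available; your peeling-off of the $\nabla(\mu)$-isotypic subobject for $\mu$ maximal is then the standard induction. Second, for the increasing $\Delta$-filtration the paper does not just mirror the layer argument: the key point there is that the subobjects $O^{\Gamma_k}(M)$ are themselves finite, proved directly from $\Gamma$-restrictedness (a finite subobject $N$ realizing all multiplicities over $\Gamma\setminus\Gamma_k$ must equal $O^{\Gamma_k}(M)$); your sketch should state and use this, since ``finite-length layers'' is again not something you can get from support considerations alone.
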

\begin{proof} Suppose that $Ext^1_{\cal C}(\Delta(\lambda), M)=0$ for all
$\lambda\in\Lambda$. By our assumption there is a decreasing chain
of finitely generated ideals
$$\Gamma=\Gamma_0\supseteq\Gamma_1\supseteq\ldots$$
such that $\Gamma\setminus\Gamma_k$ is finite for every $k\geq 0$ and
$\bigcap_{k\geq 0}\Gamma_k =\emptyset$. For short, 
denote $O_{\Gamma_k}(M)$ just by $M_k$. There is a decreasing chain of
subobjects
$$M=M_0\supseteq M_1\supseteq M_2\supseteq\ldots ,$$
where any quotient $M/M_k$ is finite and $\bigcap_{1\leq k}M_k=0$.
In fact, the socle of any $M/M_k$ belongs to
$\Gamma\setminus\Gamma_k$, and therefore it is finite. Thus $M/M_k$
can be embedded into a finite sum of finite indecomposable
injectives from ${\cal C}[\Gamma]$. Consider the following fragment
of a long exact sequence
$$Hom_{\cal C}(\Delta(\mu), M/M_k)\to Ext^1_{\cal C}(\Delta(\mu), M_k)\to 0\to $$
$$\to Ext^1_{\cal C}(\Delta(\mu), M/M_k)\to Ext^2_{\cal C}(\Delta(\mu), M_k) .$$
If  $Ext^1_{\cal C}(\Delta(\mu), M_k)\neq 0$, then $\mu\in\Gamma_k$.
On the other hand, since the socle of $M/M_k$ does not belong to
$\Gamma_k$, we infer that $Hom_{\cal C}(\Delta(\mu), M/M_k)=0$. In
particular, $Ext^1_{\cal C}(\Delta(\mu), M_k)=0$ for every $\mu$. By
Lemma 4.3,  $Ext^2_{\cal C}(\Delta(\mu), M_k)=0$ for every $\mu$
and therefore, $Ext^1_{\cal C}(\Delta(\mu), M/M_k)=0$. Since
$M_{k-1}/M_k=O_{\Gamma_{k-1}}(M/M_k)$, we can repeat the above
arguments to obtain $Ext^1_{\cal C}(\Delta(\mu),
M_{k-1}/M_k)=0$ for every $\mu$. Finally, any object $M_{k-1}/M_k$ is
finite and we conclude the proof of the first statement by standard arguments from
\cite{jan, dr}.

For the second statement, it is enough to prove that all subobjects
$O^{\Gamma_k}(M)$ are finite. In fact, $O^{\Gamma_k}(M)$ contains a
finite subobject $N$ such that $[N : L(\mu)]=[O^{\Gamma_k}(M) :
L(\mu)]$ for all $\mu\in\Gamma\setminus\Gamma_k$. In particular,
$O^{\Gamma_k}(M)/N$ belongs to $\Gamma_k$, that is
$N=O^{\Gamma_k}(M)$. The final argument is the same as above.
\end{proof}

\section{Donkin-Koppinen filtration and (co)adjoint action invariants}

\begin{lm}
The superalgebra $K[G]$ is a $\Lambda$-restricted $G\times
G$-supermodule, where $\Lambda=(X(T)^-\times\{0, 1\})\times X(T)^+$.
\end{lm}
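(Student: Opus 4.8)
The plan is to prove the statement by showing directly that every composition factor of the $G\times G$-supermodule $K[G]$ occurs with finite multiplicity. Since $K[G]$ trivially lies in ${\cal C}[\Lambda]$, this is exactly the assertion that $K[G]$ is $\Lambda$-restricted, i.e. that $[K[G]:L(\nu)\otimes L(\nu')]<\infty$ for every simple $G\times G$-supermodule, which by Lemma 3.4 has the form $L(\nu)\otimes L(\nu')$ with $\nu,\nu'\in X(T)^+$. I will fix such a simple object and bound its multiplicity.

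First I would extract a costandard filtration of $K[G]$. By Corollary 3.1 the category $G\times G\text{-mod}$ is a highest weight category, and Proposition 3.2 gives $Ext^1_{G\times G}(V_+(\lambda)\otimes V_-(\lambda'),K[G])=0$ for all $(\lambda,\lambda')\in X(T)^-\times X(T)^+$, together with the statement that $Hom_{G\times G}(V_+(\lambda^{\epsilon})\otimes V_-(\lambda'),K[G])_0$ is nonzero (and then one-dimensional) exactly when $\epsilon=0$ and $\lambda=-\lambda'$. Feeding this into the good-filtration criterion of Lemma 3.6 and the multiplicity formula recorded after it, I obtain that $K[G]$ has a costandard filtration whose successive quotients are the objects $H^0_+((-\lambda')^0)\otimes H^0_-(\lambda')$ for $\lambda'\in X(T)^+$, each occurring exactly once. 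Consequently, using that the two copies of $G$ act independently on the two tensor factors so that layer multiplicities factor as products of one-sided multiplicities, I get
\[
[K[G]:L(\nu)\otimes L(\nu')] \;=\; \sum_{\lambda'\in X(T)^+}\,[H^0_+((-\lambda')^0):L(\nu)]\cdot[H^0_-(\lambda'):L(\nu')].
\]
Each term on the right is finite because the costandard objects are finite-dimensional, so everything reduces to showing that only finitely many $\lambda'$ contribute.

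This is the decisive step, and it hinges on using both Borel subsupergroups at once. The factor $H^0_-(\lambda')$ is costandard for the dominant order with socle $L(\lambda')$, so $[H^0_-(\lambda'):L(\nu')]\neq 0$ forces $\nu'\leq\lambda'$; meanwhile $H^0_+((-\lambda')^0)$ is costandard for the \emph{inverse} dominant order and, by Lemma 3.4, also has socle $L(\lambda')$, whence $[H^0_+((-\lambda')^0):L(\nu)]\neq 0$ forces $\lambda'\leq\nu$. The opposite orderings attached to $B^-$ and $B^+$ therefore sandwich the index into the closed interval $\nu'\leq\lambda'\leq\nu$ of $X(T)^+$, which is finite by Example 4.1. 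Thus the displayed sum has only finitely many nonzero terms and $[K[G]:L(\nu)\otimes L(\nu')]<\infty$, as required. The main obstacle I anticipate is the rigorous construction of the costandard filtration: because the weight poset of $G\times G$ has no minimal elements, $K[G]$ fails the restrictedness hypothesis of Lemma 3.6 and the lemma cannot be invoked verbatim. I would circumvent this by applying the good-filtration analysis to finite subobjects of $K[G]$ and checking that the multiplicity of the fixed factor $L(\nu)\otimes L(\nu')$ stabilizes (it is already captured by the finitely many layers indexed by $\lambda'\in[\nu',\nu]$), so that the sandwich bound above controls $[K[G]:L(\nu)\otimes L(\nu')]$ even in the absence of a globally restricted filtration.
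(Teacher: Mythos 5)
Your argument hinges on extracting an increasing, exhaustive costandard filtration of the whole of $K[G]$ and then summing layer multiplicities, and the obstacle you flag at the end is not a technicality but fatal: as the introduction of the paper stresses, no such filtration exists (the relevant poset has no minimal elements, and $\hat{K[G]}$ is order-isomorphic to $X(T)^+$, which contains infinite decreasing chains, so the hypothesis of Lemma 3.6 fails). Your proposed repair does not close this hole. Finite subobjects of $K[G]$ do not satisfy the Ext-vanishing criteria, hence carry no good filtrations of their own; and the only ``local'' good filtrations in sight --- the decreasing ones of the truncations $O_{\Gamma}(K[G])$ produced by Theorem 5.1 --- are obtained by applying Theorem 4.1 to a \emph{$\Gamma$-restricted} object, i.e.\ they presuppose exactly the lemma you are trying to prove. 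The paper's proof avoids filtrations of $K[G]$ altogether: it fixes a finitely generated ideal $\Gamma$, works inside the highest weight category ${\cal C}[\Gamma]$, where the projective cover $P_{\Gamma}(-\lambda^{\epsilon},\lambda')$ is finite and has a finite $\Delta$-filtration, writes
$$[O_{\Gamma}(K[G]) : L(-\lambda^{\epsilon})\otimes L(\lambda')]=\dim Hom_{G\times G}(P_{\Gamma}(-\lambda^{\epsilon},\lambda'), O_{\Gamma}(K[G])),$$
and bounds this (via Corollary 4.2 and Proposition 3.2, which makes each $Hom$ from a standard object into $K[G]$ at most one-dimensional) by the number of $\Delta$-factors of $P_{\Gamma}$, indexed by the \emph{finite} set $[(\lambda^{\epsilon},\lambda'),\Gamma]$ of Example 4.1. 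The finiteness mechanism is the cap on weights \emph{from above} supplied by the generators of $\Gamma$; this truncation, which is also what Theorem 5.1 actually consumes, has no counterpart in your argument.

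The sandwich step is also wrong in direction, and this error is precisely what makes your sum look finite. Both tensor factors impose constraints on composition factors \emph{on the same side}: $[H^0_-(\lambda'):L(\nu')]\neq 0$ forces $\nu'\leq\lambda'$, and the bijection between inverse-order labels in $X(T)^-$ and dominant labels in $X(T)^+$ coming from Lemma 3.4 is order-\emph{preserving} (the reversal of the order and the sign change $\lambda\mapsto-\lambda$ cancel), so the composition factors of the inverse-order costandard object $H^0_+((-\lambda')^{0})$ are again simples labelled by weights \emph{below} $\lambda'$, not above it. One can cross-check this with Lemma 3.2: $H^0_+(-\lambda')\simeq V_-(\lambda')^*$, and dualizing the factors $L(\mu)$, $\mu\leq\lambda'$, of the Weyl supermodule again yields labels bounded above by a fixed weight. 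The two Borel subsupergroups give opposite constraints at the level of $Hom$'s from standard objects --- that is the diagonal condition of Proposition 3.2 --- but not at the level of composition factors of costandard objects. With the correct inequalities the contributing indices form the infinite up-set $\{\lambda' : \lambda'\geq\nu^{\sharp},\ \lambda'\geq\nu'\}$ (with $\nu^{\sharp}$ the dual weight of $\nu$), and your displayed sum has no reason to converge; indeed in the purely even situation ($n=0$, i.e.\ classical $GL_m$ in characteristic $p$, where Donkin's increasing filtration does exist) the analogous sum is genuinely infinite, since a fixed simple occurs in $H^0(\lambda')$ for infinitely many $\lambda'$ (e.g.\ Frobenius twists of a single occurrence). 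So no argument along your lines can establish the untruncated finiteness you aim at; the content of the lemma, as its proof and its use in Theorem 5.1 show, is the statement for $O_{\Gamma}(K[G])$ with $\Gamma$ finitely generated.
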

\begin{proof} It is enough to show that $O_{\Gamma}(K[G])$ is
$\Gamma$-restricted for any (finitely) generated ideal $\Gamma$ of $\Lambda$. Using 
Corollary 4.2 and Proposition 3.2 we
have
$$[O_{\Gamma}(K[G]) : L(-\lambda^{\epsilon})\otimes
L(\lambda')]=\dim Hom_{G\times G}(P_{\Gamma}(-\lambda^{\epsilon},
\lambda'), O_{\Gamma}(K[G]))\leq$$
$$\sum_{(\mu^{\epsilon},
\mu')\in [(\lambda^{\epsilon}, \lambda'), \Gamma]}(P_{\Gamma}(-\lambda^{\epsilon},
\lambda') : V_+(\mu^{\epsilon})\otimes V_-(\mu')) <\infty,
$$ 
where $(P_{\Gamma}(-\lambda^{\epsilon},
\lambda') : V_+(\mu^{\epsilon})\otimes V_-(\mu'))$ is the multiplicity of $V_+(\mu^{\epsilon})\otimes V_-(\mu')$ in a
(finite) $\Delta$-filtration of $P_{\Gamma}(-\lambda^{\epsilon},
\lambda')$.
\end{proof}
\begin{ter}
For every ideal $\Gamma$ of $\Lambda$, the $G\times G$-subsupermodule $O_{\Gamma}(K[G])$
has a decreasing (good) filtration $O_{\Gamma}(K[G])=V_0\supseteq
V_1\supseteq V_2\supseteq\ldots$ such that $V_k/V_{k+1}\simeq
V_-(\lambda_k)^*\otimes H_-^0(\lambda_k)$ for $k\geq 0$. Moreover, for
any pair of indexes $k < l$, either $\lambda_k >\lambda_l$ or $\lambda_k$ and $\lambda_l$ are not comparable.
\end{ter}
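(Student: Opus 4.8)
The strategy is to combine the two halves of Theorem 4.1 with the structural facts about $K[G]$ proved in Proposition 3.2 and Lemma 5.1. First I would verify that $O_\Gamma(K[G])$ satisfies the hypotheses needed to apply the first (decreasing $\nabla$-filtration) half of Theorem 4.1 \emph{in the highest weight category} $\mathcal{C}=(G\times G)\text{-mod}$, whose costandard objects are the $H^0_+(\lambda^\epsilon)\otimes H^0_-(\lambda')$ by Lemma 3.6 and Corollary 3.1. By Lemma 5.1, $K[G]$ is $\Lambda$-restricted, so $O_\Gamma(K[G])$ is $\Gamma$-restricted. The vanishing hypothesis $Ext^1_{G\times G}(\Delta(\lambda),O_\Gamma(K[G]))=0$ for all $\lambda$ is exactly where Proposition 3.2(1) enters: that proposition gives $Ext^i_{G\times G}(V_+(\lambda)\otimes V_-(\lambda'),K[G])=0$ for all $i\geq 1$, i.e. $Ext^1_{G\times G}(\Delta(\lambda),K[G])=0$ for all $\lambda$. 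Passing from $K[G]$ to its subobject $O_\Gamma(K[G])$ is precisely the content of Corollary 4.2 (the $\Delta$-version), so the hypothesis transfers. Theorem 4.1 then delivers a decreasing filtration $O_\Gamma(K[G])=V_0\supseteq V_1\supseteq\cdots$ with $\bigcap_k V_k=0$ and each quotient $V_k/V_{k+1}$ a costandard object of $G\times G$.

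**Identifying the factors.**
The next step is to pin down \emph{which} costandard objects occur, and rewrite them in the form stated. The factors are a priori of the shape $H^0_+(\mu^\epsilon)\otimes H^0_-(\mu')$. Proposition 3.2(2) tells us that the only weights $(\lambda^\epsilon,\lambda')$ for which $Hom_{G\times G}(V_+(\lambda^\epsilon)\otimes V_-(\lambda'),K[G])_0\neq 0$ are those with $\epsilon=0$ and $\lambda=-\lambda'$, each occurring with multiplicity one; since multiplicities of good-filtration factors are computed by such $Hom$-spaces (Remark 5.5 of \cite{z}, cited after Lemma 3.7), every factor has the form $H^0_+((-\lambda')^0)\otimes H^0_-(\lambda')$ with $\lambda'\in X(T)^+$. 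Writing $\lambda_k:=\lambda'$ and invoking Lemma 3.2, $H^0_+((-\lambda_k)^0)\simeq V_-(\lambda_k)^*$, so $V_k/V_{k+1}\simeq V_-(\lambda_k)^*\otimes H^0_-(\lambda_k)$, exactly as claimed.

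**The ordering of the factors.**
The comparability statement ($k<l$ forces $\lambda_k>\lambda_l$ or incomparable) is the part I expect to require the most care, and it is inherited from the construction in Theorem 4.1 rather than being automatic. The filtration there is built from the chain of subideals $\Gamma=\Gamma_0\supseteq\Gamma_1\supseteq\cdots$ with $\Gamma\setminus\Gamma_k$ finite, and $V_k=O_{\Gamma_k}(K[G])$ (after intersecting with $O_\Gamma(K[G])$); each finite quotient $M_{k-1}/M_k$ is then refined into costandard layers. The weights appearing in $V_k/V_{k+1}$ lie in $\Gamma\setminus\Gamma_{k+1}$, and the ideal structure guarantees that a smaller index corresponds to a weight that is \emph{not} strictly below a later one: if $\lambda_l<\lambda_k$ then $\lambda_l\in(\lambda_k]\subseteq\Gamma_k$ whenever $\lambda_k\in\Gamma_k$, forcing $\lambda_l$ to appear no later than $\lambda_k$, i.e. $l\le k$. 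The genuinely delicate point is that within a single finite quotient $M_{k-1}/M_k$ one must order the finitely many costandard factors so as to respect $\leq$ \emph{downward}; this is the standard refinement of a good filtration in a finite highest weight category, and I would cite \cite{jan, dr} exactly as Theorem 4.1 does. Assembling the layers across all $k$ then yields the global ordering, completing the proof.
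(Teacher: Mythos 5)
Your proposal is correct and is essentially the paper's own argument: the paper proves this theorem by the single line ``combine Theorem 4.1 and Lemma 5.1,'' with the hypotheses of Theorem 4.1 supplied exactly as you describe (Lemma 5.1 for $\Gamma$-restrictedness, Proposition 3.2(1) together with Corollary 4.2 for the $Ext^1$-vanishing), the factors identified via Proposition 3.2(2) and Lemma 3.2 (this is recorded in Remark 5.1), and the comparability of weights read off from the construction in the proof of Theorem 4.1, just as you indicate. One small correction to your last paragraph: the final implication is stated backwards --- from $\lambda_l<\lambda_k$ and $\lambda_k\in\Gamma_j$ one gets $\lambda_l\in(\lambda_k]\subseteq\Gamma_j$, hence $\lambda_l$ occurs no \emph{earlier} than $\lambda_k$, i.e.\ $l\geq k$ (your conclusion ``$l\leq k$'' would contradict the theorem itself), and the layers here should be indexed by the chain $\Gamma_0\supseteq\Gamma_1\supseteq\ldots$ of subideals rather than by the filtration index, since the good filtration refines the chain $O_{\Gamma_j}(K[G])$.
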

\begin{proof} Combine Theorem 4.1 and Lemma 5.1. \end{proof}
\begin{rem}
By Proposition 3.2, a supermodule $V_-(\lambda)^*\otimes
H_-^0(\lambda)$ appears as a factor of some good filtration of
$O_{\Gamma}(K[G])$ if and only if $(-\lambda ,\lambda)\in\Gamma$ and
$(O_{\Gamma}(K[G]) : V_-(\lambda)^*\otimes H_-^0(\lambda))=1$.
\end{rem}
The above filtration will be called a {\it
Donkin-Koppinen} filtration (see \cite{kop}).
\begin{rem}
We believe that Theorem 5.1 can be proved for other classical supergroups. All we need to do  
is to extend the theory from \cite{z} for them. The case of ortho-symplectic supergroups seems to be most promissing, 
but in this case it is not even known whether the corresponding categories of (rational) supermodules are highest weight categories.   
\end{rem}

Consider the coadjoint action $\nu_l$ of $G$ on $K[G]$ and define
a subsuperalgebra $R=K[G]^G$ of coadjoint (rational) invariants. By
definition, $R=\{f\in K[G]|\nu_l(f)=f\otimes 1\}$. Since $\nu_l$ is
dual to the adjoint action of $G$ on itself, these invariants can
be also called adjoint. Moreover, a rational function $f\in K[G]$
belongs to $R$ if and only if $f(g_1^{-1}g_2g_1)=f(g_1)$ for every $A\in SAlg_K, g_1$ and $g_2\in G(A)$.
In other words, invariants from $R$ are {\it absolute} invariants.

A subalgebra of $R$, generated by polynomial invariants, will be
denoted by $R_{pol}$.

Let $H$ be an algebraic supergroup and $V$ be a finite-dimensional $H$-supermodule. Fix a ${\bf Z}_2$-homogeneous basis of $V$, say
$v_1,\ldots , v_p, v_{p+1},\ldots, v_{p+q}$, where $|v_i|=0$ if
$1\leq i\leq p$ and $|v_i|=1$ otherwise. Set $\rho_V(v_i)=\sum_{1\leq
j\leq p+q} v_j\otimes f_{ji}$ for $1\leq i\leq p+q$ and denote
$Tr(\rho)$ (or by $Tr(V)$, see \cite{kv}) the supertrace
$\sum_{1\leq i\leq p}f_{ii}-\sum_{p+1\leq i\leq p+q}f_{ii}$. It is
well known that $Tr(\rho)$ does not depend on a choice of ${\bf
Z}_2$-homogeneous basis of $V$, see \cite{kv, ber, ktr}.
\begin{lm}
The supertrace $Tr(\rho)$ belongs to the superalgebra of
(co)adjoint invariants $K[H]^H$.
\end{lm}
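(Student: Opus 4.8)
We must show that for an algebraic supergroup $H$ and a finite-dimensional $H$-supermodule $V$ with coaction $\rho_V(v_i)=\sum_j v_j\otimes f_{ji}$, the supertrace $Tr(\rho)=\sum_{1\le i\le p}f_{ii}-\sum_{p+1\le i\le p+q}f_{ii}$ is fixed by the coadjoint action $\nu_l$, i.e. $\nu_l(Tr(\rho))=Tr(\rho)\otimes 1$, and likewise for the right adjoint action. The natural plan is a direct computation using the explicit formula $\nu_l(f)=\sum(-1)^{|f_1||f_2|}f_2\otimes s_H(f_1)f_3$ from Section~3, combined with the fact that the $f_{ij}$ are matrix coefficients of a comodule and hence their coproduct is determined by $\delta_H(f_{ij})=\sum_k f_{ik}\otimes f_{kj}$.

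**The computation.** First I would record what the comodule axioms give for the entries $f_{ij}$. Coassociativity of $\rho_V$ forces $\delta_H(f_{ij})=\sum_{1\le k\le p+q} f_{ik}\otimes f_{kj}$, and the counit axiom gives $\epsilon_H(f_{ij})=\delta_{ij}$; moreover $|f_{ij}|=|v_i|+|v_j|$ since $\rho_V$ is even. Writing $\delta_{ij}$ for the parity of $v_i$ as $|i|$, applying the formula for $\nu_l$ to a single entry $f_{ij}$ means iterating the coproduct once more to split $f_{ij}$ into a threefold tensor $\sum_{k,l} f_{ik}\otimes f_{kl}\otimes f_{lj}$, and then substituting into $\nu_l(f_{ij})=\sum(-1)^{|(f_{ij})_1||(f_{ij})_2|}(f_{ij})_2\otimes s_H((f_{ij})_1)(f_{ij})_3$. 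This yields
\[
\nu_l(f_{ij})=\sum_{k,l}(-1)^{|f_{ik}|\,|f_{kl}|}\,f_{kl}\otimes s_H(f_{ik})f_{lj}.
\]
Now I set $i=j$, attach the sign $(-1)^{|i|}$ that defines the supertrace, and sum over $i$. The crucial step is to sum over the diagonal index $i$ first: this produces $\sum_i (-1)^{|i|}(-1)^{|f_{ik}||f_{kl}|} s_H(f_{ik})f_{li}$ as the coefficient of $f_{kl}$, and the claim reduces to showing that this coefficient equals $(-1)^{|k|}\delta_{kl}\cdot 1$, so that the right-hand factor collapses to $Tr(\rho)\otimes 1$ after summing over $k,l$.

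**The key identity.** The heart of the matter is the antipode relation $\sum_i s_H(f_{ik})f_{li}=(-1)^{?}\,\delta_{kl}\,1$ read off from the defining property $\sum s_H(f_1)f_2=\epsilon(f)1$ applied to the matrix coefficients, together with careful bookkeeping of the Koszul signs. Concretely, the antipode axiom $m\circ(s_H\otimes\mathrm{id})\circ\delta_H=\eta\circ\epsilon_H$ evaluated on $f_{lk}$ gives $\sum_i s_H(f_{li})f_{ik}=\delta_{lk}1$, and the variant $m\circ(\mathrm{id}\otimes s_H)\circ\delta_H=\eta\circ\epsilon_H$ gives $\sum_i f_{li}s_H(f_{ik})=\delta_{lk}1$. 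The main obstacle — and the only genuinely delicate point — is to verify that the accumulated signs $(-1)^{|i|}(-1)^{|f_{ik}||f_{kl}|}$, once one commutes $s_H(f_{ik})$ past $f_{li}$ to match the antipode axiom (a move that itself introduces a sign $(-1)^{|f_{ik}||f_{li}|}$ in the supercommutative product of $K[H]$), combine to reproduce exactly the supertrace sign $(-1)^{|k|}$ on the surviving diagonal term $k=l$. I expect this to work out precisely because the parity of $f_{ik}$ is $|i|+|k|$, so the sign $(-1)^{|i|}$ in the supertrace is exactly what is needed to cancel the $|i|$-dependence arising from permuting the odd factors, leaving the $k$-dependent sign $(-1)^{|k|}$ intact. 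Once this diagonal identity is established, summing over $k$ recovers $Tr(\rho)\otimes 1$, proving $\nu_l$-invariance; the right adjoint action is handled by the symmetric computation using $\rho$ in place of $\nu_l$, or simply by noting $\nu_l$ is the composite of $\rho$ with the diagonal, so invariance under the full $G\times G$-adjoint action follows at once.
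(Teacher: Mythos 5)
Your proof is correct, but it takes a genuinely different route from the paper's. The paper does not compute at all: it observes that the matrix coefficients $f_{ij}$ generate a Hopf subsuperalgebra $S\subseteq K[H]$, that $c_{ij}\mapsto f_{ij}$ defines a Hopf superalgebra epimorphism $K[GL(p|q)]\to S$ (dually, the supergroup homomorphism $H\to GL(p|q)$ afforded by $V$), and then quotes Berezin \cite{ber} for the fact that the supertrace of the generic matrix is an adjoint invariant of $GL(p|q)$; since the coadjoint action is natural with respect to supergroup homomorphisms, the invariance is inherited by $Tr(\rho)\in K[H]$. Your argument is in effect a self-contained, elementary proof of the fact the paper outsources to \cite{ber}, carried out directly for an arbitrary finite-dimensional supercomodule, and it is sound: the one step you left as an ``expectation'' does check out. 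Writing $a=|i|$, $b=|k|$, $c=|l|$ and using $|f_{uv}|=|u|+|v|$, the accumulated exponent in your sum is
$$|i|+|f_{ik}||f_{kl}|+|f_{ik}||f_{li}|=a+(a+b)(b+c)+(a+b)(c+a)\equiv a+(a+b)^2\equiv b \pmod 2,$$
so \emph{every} term $(k,l)$, diagonal or not, carries the uniform sign $(-1)^{|k|}$, and the antipode identity $\sum_i f_{li}s_H(f_{ik})=\delta_{lk}1$ then collapses the sum to $\sum_k(-1)^{|k|}f_{kk}\otimes 1=Tr(\rho)\otimes 1$. In a final write-up you should perform this two-line verification rather than predict its outcome, since it is the entire content of the lemma in your approach. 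Also, your closing sentence is muddled: $Tr(\rho)$ is certainly not invariant under the full two-sided $G\times G$-action $\rho$, and no such claim is needed --- the lemma only asserts invariance under $\nu_l$, which your main computation already establishes. As for trade-offs: the paper's route is shorter and sidesteps all Koszul bookkeeping, at the cost of an external citation and of the (mild) point that $c_{ij}\mapsto f_{ij}$ extends to the localization $K[GL(p|q)]$; your route is self-contained and makes explicit the precise mechanism --- the sign $(-1)^{|i|}$ in the supertrace is exactly what cancels the parity cost of commuting $s_H(f_{ik})$ past $f_{li}$ --- which the citation hides.
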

\begin{proof} Denote by $S$ a Hopf subsuperalgebra of $K[H]$ generated
by the elements $f_{ij}$. There is a natural Hopf superalgebra
epimorphism $K[GL(p|q)]\to S$ induced by the map $c_{ij}\mapsto
f_{ij}$. It remains to refer to \cite{ber}.
\end{proof}
\bigskip

Consider $K[G]$ as a $T$-supermodule. Observe that the action of $T$ is a
restriction of the coaction $\rho$ on the subsupergroup $1\times
T\subseteq T\times T$. Recall that a $G$-supermodule $V$ is
semisimple as a $T$-supermodule and $V=\bigoplus_{\lambda\in X(T)}
V_{\lambda}$, where $V_{\lambda}$ is a direct sum of one-dimensional 
$T$-supermodules of weight $\lambda$. In particular,
any element $f\in K[G]$ can be represented as a sum
$\sum_{\lambda\in X(T)} f_{\lambda}$, where $f_{\lambda}\in
K[G]_{\lambda}$. A non-zero summand $f_{\lambda}$ of $f$ will be called
{\it leading} if $\lambda$ is maximal among all $\mu$ such that
$f_{\mu}\neq 0$. The corresponding weight $\lambda$ will be also called
leading.
\begin{lm}
Let $V$ be a $G$-supermodule. Let $v_1,\ldots , v_t$ be a ${\bf Z}_2$-homogeneous
basis of $V$ such that $v_i\in V_{\lambda(i)}$
(it is possible that $\lambda(i)=\lambda(j)$ for $i\neq j$). If
$\rho_V(v_i)=\sum_{1\leq j\leq t}v_j\otimes d_{ji}$ for $1\leq i\leq
t$, then $d_{ji}\in K[G]_{\lambda(i)}$ for $1\leq i, j\leq t$.
\end{lm}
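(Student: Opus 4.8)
The plan is to read the torus weights of the $d_{ji}$ directly off the supercomodule axiom. First I would pin down the relevant structures. The $T$-supermodule structure on $K[G]$ is the restriction of $\rho$ to $1\times T$, i.e. the right-translation coaction. Concretely, let $\pi_T:K[G]\to K[T]$ be the canonical Hopf superalgebra projection onto the diagonal torus (sending off-diagonal $c_{kl}$ to $0$), and let $c^{\mu}=\prod_{1\leq k\leq m+n}c_{kk}^{\mu_k}$ be the group-like element of weight $\mu$. Since $(1\otimes\pi_T)\delta_G(c_{kl})=c_{kl}\otimes c_{ll}$, a ${\bf Z}_2$-homogeneous $f\in K[G]$ lies in $K[G]_{\mu}$ exactly when $(1\otimes\pi_T)\delta_G(f)=f\otimes c^{\mu}$; note the weight is governed by the column index and is carried entirely by the even group-likes $c_{ll}$.

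Next I would translate the hypothesis $v_i\in V_{\lambda(i)}$. Applying $1\otimes\pi_T$ to $\rho_V(v_i)=\sum_{1\leq j\leq t}v_j\otimes d_{ji}$ computes the restricted $T$-coaction on $v_i$, so the weight condition reads $\sum_j v_j\otimes\pi_T(d_{ji})=v_i\otimes c^{\lambda(i)}$. As $v_1,\ldots,v_t$ is a basis, comparing coefficients gives
$$\pi_T(d_{ji})=\delta_{ji}\,c^{\lambda(i)},\qquad 1\leq i,j\leq t.$$

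The decisive step is then coassociativity of the supercomodule. Equating $(\rho_V\otimes 1)\rho_V$ with $(1\otimes\delta_G)\rho_V$ and comparing the coefficient of each $v_j$ yields the comatrix identity $\delta_G(d_{ji})=\sum_{1\leq k\leq t}d_{jk}\otimes d_{ki}$ (no extra signs arise, since $\rho_V\otimes 1$ acts factorwise without transpositions). Applying $1\otimes\pi_T$ and substituting the relation above gives
$$(1\otimes\pi_T)\delta_G(d_{ji})=\sum_{k}d_{jk}\otimes\pi_T(d_{ki})=\sum_{k}d_{jk}\otimes\delta_{ki}\,c^{\lambda(i)}=d_{ji}\otimes c^{\lambda(i)},$$
which by the criterion of the first paragraph means precisely $d_{ji}\in K[G]_{\lambda(i)}$, as claimed.

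The one point I would verify most carefully — and the only genuine obstacle — is the identification of the $1\times T$-action on $K[G]$ with the right-translation coaction $(1\otimes\pi_T)\delta_G$. Starting from $\rho(f)=\sum(-1)^{|f_1||f_2|}f_2\otimes s_G(f_1)\otimes f_3$, restricting to $1\times T$ means applying the counit in the first $K[G]$-slot and $\pi_T$ in the second; one checks that no antipode survives because the second copy of $G$ acts by honest right multiplication, and that the signs vanish since $\epsilon\circ s_G=\epsilon$ is supported on the even part. Using the counit axiom this collapses to $(1\otimes\pi_T)\delta_G(f)$. Once this identification is secured, everything else is the formal manipulation above.
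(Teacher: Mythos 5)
Your proof is correct and is in substance the same as the paper's: the paper's one-line proof asserts that $\rho_V : V\to V_{triv}\otimes K[G]|_T$ is a morphism of $T$-supermodules (which is just coassociativity of the coaction), and your comatrix identity $\delta_G(d_{ji})=\sum_k d_{jk}\otimes d_{ki}$ followed by application of $1\otimes\pi_T$ is exactly that statement written out in the basis $v_1,\ldots ,v_t$. Your closing verification that the $1\times T$-action on $K[G]$ is right translation, i.e. $(1\otimes\pi_T)\delta_G$, is the same identification the paper uses implicitly in writing $K[G]|_T$.
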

\begin{proof} The map $\rho_V : V\to V_{triv}\otimes K[G]|_T$, where $V_{triv}$ is considered as a
trivial $T$-supermodule, is a morphism of $T$-supermodules.
\end{proof}

\begin{cor}
If $\lambda=\lambda(i)$ is a maximal (or largest, respectively) element, then
$Tr(V)_{\lambda}$ is a leading (or a unique leading, respectively)
summand of $Tr(V)$.
\end{cor}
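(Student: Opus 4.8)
The plan is to read off the weight structure of $Tr(V)$ from the weight localization of its diagonal coefficients, and then to establish non-vanishing by restricting to the torus.

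First I would compute the $T$-weight decomposition of $Tr(V)$. By the previous lemma each $d_{ii}$ lies in $K[G]_{\lambda(i)}$, so collecting the terms of $Tr(\rho)=\sum_i (-1)^{|v_i|}d_{ii}$ according to their weight gives $Tr(V)_\mu=\sum_{i:\,\lambda(i)=\mu}(-1)^{|v_i|}d_{ii}$ for every $\mu\in X(T)$. This sum is empty, and hence $Tr(V)_\mu=0$, unless $\mu$ coincides with one of the weights $\lambda(1),\dots,\lambda(t)$ of $V$. Thus the only structural fact needed for the weight comparison is the inclusion $\mathrm{supp}(Tr(V))\subseteq\mathrm{wt}(V)$, where $\mathrm{supp}(Tr(V))=\{\mu\mid Tr(V)_\mu\ne 0\}$ and $\mathrm{wt}(V)=\{\lambda(1),\dots,\lambda(t)\}$.

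Granting this inclusion, the transfer of maximality is immediate. If $\lambda$ is maximal in $\mathrm{wt}(V)$, then it is a fortiori maximal in the smaller set $\mathrm{supp}(Tr(V))$, so as soon as $Tr(V)_\lambda\ne 0$ it is a leading summand. If in addition $\lambda$ is the largest element of $\mathrm{wt}(V)$, then $\mu\le\lambda$ for every $\mu\in\mathrm{supp}(Tr(V))$; any $\mu\ne\lambda$ then satisfies $\mu<\lambda$ and so cannot be maximal, whence $Tr(V)_\lambda$ is the \emph{unique} leading summand.

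The real work is the non-vanishing $Tr(V)_\lambda\ne 0$, and this is the step I expect to be the main obstacle. Here I would restrict along the homomorphism $K[G]\to K[T]$ attached to the standard torus $T=B^-\cap B^+$. Since $V|_T$ is diagonalized by the weight basis, the coaction of $T$ is $\rho(v_i)=v_i\otimes c^{\lambda(i)}|_T$, so $d_{ji}|_T=\delta_{ji}\,c^{\lambda(i)}|_T$ and therefore $Tr(V)_\lambda|_T=\big(\dim(V_\lambda)_0-\dim(V_\lambda)_1\big)\,c^\lambda|_T$. Because the monomials $c^\mu|_T$ are pairwise distinct group-likes, hence linearly independent in $K[T]$, this restriction is nonzero precisely when the superdimension of the weight space $V_\lambda$ is nonzero, and then $Tr(V)_\lambda\ne 0$ already in $K[G]$. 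Some such hypothesis is genuinely needed: without it $Tr(V)$ may vanish identically (for instance for $V\oplus V^c$), leaving no leading summand at all. I would therefore either assume, or restrict the statement to the case, that $V_\lambda$ has nonzero superdimension; under this assumption the two displayed identities close both the maximal and the largest case.
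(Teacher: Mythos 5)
Your argument is correct, and its first half coincides with what the paper intends: Corollary 5.1 is stated with no proof at all, as an immediate consequence of Lemma 5.3, and that implicit argument is exactly your observation that $Tr(V)_\mu=\sum_{i:\,\lambda(i)=\mu}(-1)^{|v_i|}d_{ii}$, so the support of $Tr(V)$ lies in the set of weights of $V$ and maximality (resp.\ being largest) transfers to the leading (resp.\ unique leading) position. Where you genuinely go beyond the paper is the non-vanishing $Tr(V)_\lambda\neq 0$, which the paper never addresses even though it is part of the assertion (a leading summand is non-zero by definition). You are right that this step is not free: your example $V\oplus V^c$, for which $Tr(V\oplus V^c)=0$, shows the corollary is literally false without a hypothesis such as $\mathrm{sdim}\,V_\lambda=\dim(V_\lambda)_0-\dim(V_\lambda)_1\neq 0$. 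Your restriction-to-$K[T]$ computation proving sufficiency of that hypothesis is correct -- it is essentially the same map $\phi$ the paper later uses to define the formal supercharacter $\chi(V)=\phi(Tr(V))$ -- and an even shorter variant is to apply the counit $\epsilon$ of $K[G]$: the comodule axiom gives $\epsilon(d_{ji})=\delta_{ji}$, hence $\epsilon(Tr(V)_\lambda)=\mathrm{sdim}\,V_\lambda$ directly. The gap is harmless for the paper itself: in every subsequent use of the corollary (Corollaries 5.2 and 5.3, and Theorem 5.3 applied to simple supermodules or to the $\nabla(\lambda)$) the extremal weight space in question is one-dimensional, so its superdimension is $\pm 1$ and your hypothesis holds automatically. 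In short, your proposal should be read as the corrected form of the paper's statement, proved by the same support argument the paper leaves implicit, plus the missing non-vanishing criterion.
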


Let $E$ be a standard $G$-supermodule with ${\bf Z}_2$-homogeneous
basis $e_1,\ldots , e_{m+n}$, where $|e_i|=0$ if $1\leq i\leq m$ and
$|e_i|=1$ otherwise, and such that $\rho_E(e_i)=\sum_{1\leq j\leq
m+n} e_j\otimes c_{ji}$ for $1\leq i\leq m+n$. Denote by $Ber(E)$ a
one-dimensional $G$-supermodule corresponding to a group-like
element (Berezinian)
$Ber(C)=\det(C_{00}-C_{01}C_{11}^{-1}C_{10})\det(C_{11})^{-1}$. It
is clear that $Ber(C)\in K[G]_{\theta}$, where $\theta=(1^m |
(-1)^n)=(\underbrace{1,\ldots , 1}_m | \underbrace{-1,\ldots ,
-1}_n)$.

Let ${\cal I}(r)$ be a set of all maps $I :
\underline{r}\to\underline{m+n}$.  One can consider $I\in
{\cal I}(r)$ as a multi-index $(i_1,\ldots, i_r)$, where
$i_k=I(k)$ for $1\leq k\leq r$. For $I, J\in {\cal I}(r)$ we set
$x_{IJ}=(-1)^{s(I, J)}c_{IJ}$, where $c_{IJ}=\prod_{1\leq k\leq r}
c_{i_k j_k}, s(I, J)=\sum_t |i_t|(\sum_{s <t}|i_s|+|j_s|)$ and
$|i|=|e_i|$. The superspace $E^{\otimes r}$ has a basis
$e_I=e_{i_1}\otimes\ldots\otimes e_{i_r}$ for $I\in {\cal I}(r)$. The superspaces $E^{\otimes r}$,
exterior powers $\Lambda^r(E)$ and symmetric powers $S^r(E)$ have a natural structure of a
$G$-supermodules, see \cite{zm}.
\begin{lm}
The structure of $G$-supermodule on $E^{\otimes r}$ is given by
$$\rho_{E^{\otimes r}}(e_I) =\sum_{J\in {\cal I}(r)} e_{J}\otimes x_{JI}.$$
\end{lm}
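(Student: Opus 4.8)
The plan is to argue by induction on $r$, building the coaction on $E^{\otimes r}$ from that on $E$ via the description of the diagonal action on a tensor product of two supercomodules. Recall that if $V,W$ are right $K[G]$-supercomodules with $\rho_V(v)=\sum v_1\otimes f_2$ and $\rho_W(w)=\sum w_1\otimes h_2$, then composing the external $G\times G$-coaction displayed earlier in Section 3 with the diagonal inclusion $G\to G\times G$ (which multiplies the two coordinate factors) yields
$$\rho_{V\otimes W}(v\otimes w)=\sum (-1)^{|w_1||f_2|}(v_1\otimes w_1)\otimes f_2 h_2 .$$
The Koszul sign $(-1)^{|w_1||f_2|}$ is precisely the one recording the transposition of $f_2\in K[G]$ past $w_1\in W$.

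For the base case $r=1$ the asserted formula $\rho_E(e_i)=\sum_j e_j\otimes c_{ji}$ is the defining coaction on the standard supermodule, and here the inner sum defining $s(j,i)$ is empty, so $s(j,i)=0$ and $x_{ji}=c_{ji}$, as required. For the inductive step I write $E^{\otimes r}=E^{\otimes(r-1)}\otimes E$ and split each multi-index as $I=(I',i_r)$, $J=(J',j_r)$ with $I',J'\in{\cal I}(r-1)$. Applying the displayed diagonal formula with $V=E^{\otimes(r-1)}$, $v=e_{I'}$, $f_2=x_{J'I'}$ (the inductive hypothesis), and $W=E$, $w=e_{i_r}$, $h_2=c_{j_r i_r}$ gives
$$\rho_{E^{\otimes r}}(e_I)=\sum_{J',\,j_r}(-1)^{|j_r|\,|x_{J'I'}|}(e_{J'}\otimes e_{j_r})\otimes x_{J'I'}\,c_{j_r i_r}.$$
Since the factor $(-1)^{s(J',I')}$ does not affect parity, $|x_{J'I'}|=|c_{J'I'}|=\sum_{s<r}(|j_s|+|i_s|)$, while $x_{J'I'}\,c_{j_r i_r}=(-1)^{s(J',I')}c_{JI}$ because $c_{J'I'}\,c_{j_r i_r}=c_{JI}$ by definition of $c_{JI}$.

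All that then remains is the sign identity
$$|j_r|\sum_{s<r}(|j_s|+|i_s|)+s(J',I')\equiv s(J,I)\pmod 2,$$
which follows at once from the recursive shape of $s$: writing $s(J,I)=\sum_{t=1}^{r}|j_t|\sum_{s<t}(|j_s|+|i_s|)$, the term $t=r$ equals $|j_r|\sum_{s<r}(|j_s|+|i_s|)$ and the remaining terms sum to $s(J',I')$. Substituting turns the coefficient into $(-1)^{s(J,I)}c_{JI}=x_{JI}$, which closes the induction and proves the formula. I expect the only real obstacle to be this careful bookkeeping of Koszul signs; the rest is formal, resting on coassociativity of the comultiplication and the inductive hypothesis. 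One should in particular double-check that the order of multiplication in $K[G]$ (namely $x_{J'I'}$ on the left and $c_{j_r i_r}$ on the right) matches the grouping $E^{\otimes(r-1)}\otimes E$, so that the accumulated sign is exactly $s(J,I)$ and no extra transposition sign between the two coordinate factors is introduced.
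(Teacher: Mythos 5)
Your proof is correct: the induction on $r$ via the diagonal tensor-product coaction, together with the sign identity $s(J,I)=s(J',I')+|j_r|\sum_{s<r}(|j_s|+|i_s|)$, is exactly the routine verification the paper has in mind, since its own proof consists only of the words ``Straightforward calculations.'' Your bookkeeping of the Koszul sign $(-1)^{|w_1||f_2|}$ and of the factorization $x_{J'I'}c_{j_ri_r}=(-1)^{s(J',I')}c_{JI}$ checks out, so nothing is missing.
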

\begin{proof} Straightforward calculations.
\end{proof}
\bigskip

Denote by ${\cal LI}(r)$ a subset of ${\cal I}(r)$ consisting
of all multi-indexes $I$ such that for some $k\leq r$ there is $i_1
<\ldots  < i_k\leq m < j_{k+1}\leq\ldots\leq j_r$. Analogously,
denote by ${\cal SI}(r)$ a subset of ${\cal I}(r)$ consisting
of all multi-indexes $I$ such that for some $k\leq r$ there is $i_1
\leq\ldots\leq i_k\leq m < j_{k+1} < \ldots < j_r$. Let $\pi_1 :
E^{\otimes r}\to \Lambda^r(E)$ and $\pi_2 : E^{\otimes r}\to
S^r(E)$ be the canonical epimorphisms. Then $\pi_1(e_I)$
(or $\pi_2(e_I)$, respectively) form a basis of $\Lambda^r(E)$ (or $S^r(E)$, respectively), 
where $I$ runs over ${\cal LI}(r)$ (or ${\cal SI}(r)$, respectively). Recall that the symmetric group $S_r$ acts
on $E^{\otimes r}$ as $e_I\sigma=(-1)^{s(I, \sigma)}e_{I\sigma}$,
where $$s(I, \sigma)=|\{(k, l)| 1\leq k < l\leq r, \
\sigma(k)>\sigma(l), \ |i_{\sigma(k)}|=|i_{\sigma(l)}|=1\}|,$$ see
\cite{mu}. If we replace $E$ by $E^c$ but keep our previous notation,
then this action turns into $e_I\sigma=(-1)^{s'(I,
\sigma)}e_{I\sigma}$, where
$$s'(I, \sigma)=|\{(k, l)| 1\leq k < l\leq r, \
\sigma(k)>\sigma(l), \ |i_{\sigma(k)}|=|i_{\sigma(l)}|=0\}|.$$ Following
\cite{mu}, one can define two actions of $S_r$ on the set of
monomials $x_{IJ}$. Namely, $x_{IJ\star\sigma}=(-1)^{s(J,
\sigma)}x_{IJ\sigma}$ (or $x_{I\star\sigma, J}=(-1)^{s(I,
\sigma)}x_{I\sigma, J}$) and $x_{IJ\circ\sigma}=(-1)^{s'(J,
\sigma)}x_{IJ\sigma}$ (or $x_{I\circ\sigma, J}=(-1)^{s'(I,
\sigma)}x_{I\sigma, J}$). It can be checked easily that
$x_{I\star\sigma , J\star\sigma}=x_{IJ}$ and $x_{I\circ\sigma,
J\circ\sigma}=x_{IJ}$.

\begin{lm}
For any $r\geq 0$, we have $$Tr(\Lambda^r(E))=\sum_{I\in {\cal
LI}(r)}(-1)^{|I|}\sum_{\sigma\in Stab(I)\backslash S_r}
x_{I\circ\sigma, I}$$ and
$$Tr(S^r(E))=\sum_{I\in {\cal
SI}(r)}(-1)^{|I|}\sum_{\sigma\in Stab(I)\backslash S_r}
x_{I\star\sigma, I},$$ where $|I|=\sum_{1\leq k\leq
m+n}|i_k|=|e_I|$.
\end{lm}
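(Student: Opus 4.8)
The plan is to compute the supertrace of the coaction on $\Lambda^r(E)$ (and symmetrically $S^r(E)$) directly from the combinatorial description of the coaction on $E^{\otimes r}$ given in Lemma 5.6, then push the trace through the quotient maps $\pi_1$ and $\pi_2$. First I would recall that by Corollary 5.1 (or rather by the construction preceding Lemma 5.6) the exterior power $\Lambda^r(E)$ carries a $G$-supermodule structure as a quotient of $E^{\otimes r}$, with the images $\pi_1(e_I)$, for $I$ ranging over $\mathcal{LI}(r)$, forming a homogeneous basis. Since $\pi_1$ is a morphism of $G$-supermodules, the coaction on the basis element $\pi_1(e_I)$ is obtained by applying $\pi_1\otimes\mathrm{id}$ to $\rho_{E^{\otimes r}}(e_I)=\sum_{J}e_J\otimes x_{JI}$; the supertrace $Tr(\Lambda^r(E))$ is then the sum over $I\in\mathcal{LI}(r)$, with sign $(-1)^{|I|}$, of the coefficient of $\pi_1(e_I)$ in $(\pi_1\otimes\mathrm{id})\rho_{E^{\otimes r}}(e_I)$.

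The central step is to identify that diagonal coefficient. When one expands $\sum_J \pi_1(e_J)\otimes x_{JI}$ in the basis $\{\pi_1(e_I)\}_{I\in\mathcal{LI}(r)}$, each $\pi_1(e_J)$ for arbitrary $J$ is rewritten, up to the antisymmetrization sign encoded by the $S_r$-action $e_I\sigma=(-1)^{s'(I,\sigma)}e_{I\sigma}$ on $E$ (here with the primed sign because exterior powers correspond to the odd-symmetrization), as $\pm\pi_1(e_{I'})$ for the unique $I'\in\mathcal{LI}(r)$ in the orbit of $J$. Thus the multi-indices $J$ contributing to the diagonal entry for a fixed $I$ are exactly those lying in the $S_r$-orbit of $I$, namely $J=I\sigma$ for $\sigma$ running over coset representatives $Stab(I)\backslash S_r$, and the accumulated sign is precisely the one packaged into the twisted monomial $x_{I\circ\sigma,I}=(-1)^{s'(I,\sigma)}x_{I\sigma,I}$. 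Summing these yields $\sum_{\sigma\in Stab(I)\backslash S_r}x_{I\circ\sigma,I}$ as the diagonal contribution, which is exactly the inner sum in the claimed formula. The symmetric case is identical after replacing $\mathcal{LI}(r)$ by $\mathcal{SI}(r)$, the primed sign $s'$ by $s$, and the $\circ$-action by the $\star$-action.

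The hard part will be bookkeeping the signs: one must verify that combining the Koszul sign $s(J,I)$ built into $x_{JI}$, the supertrace sign $(-1)^{|I|}$, the antisymmetrization sign $s'(\cdot,\sigma)$ from rewriting $\pi_1(e_{I\sigma})$ back to $\pm\pi_1(e_I)$, and the relation $x_{I\circ\sigma,\,I\circ\sigma}=x_{II}$ (together with its analogue $x_{I\circ\sigma,J\circ\sigma}=x_{IJ}$ noted just before the statement) all reconcile to give the twisted index $I\circ\sigma$ on the left and the fixed $I$ on the right. I would organize this by fixing $I\in\mathcal{LI}(r)$, computing the coefficient of $\pi_1(e_I)$ in $(\pi_1\otimes\mathrm{id})\rho_{E^{\otimes r}}(e_I)$ one $\sigma$ at a time, and checking against the two-line identity $x_{I\circ\sigma,J\circ\sigma}=x_{IJ}$, which is the algebraic mechanism that converts a raw monomial $x_{I\sigma,I}$ into the normalized $x_{I\circ\sigma,I}$ with the correct sign. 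Once the single-orbit sign is pinned down, summing over $\mathcal{LI}(r)$ and over the coset space $Stab(I)\backslash S_r$ gives both displayed formulas, and the remaining verifications are the routine straightforward calculations alluded to in the preceding lemmas.
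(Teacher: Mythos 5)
Your proposal is correct and follows essentially the same route as the paper: push the coaction $\rho_{E^{\otimes r}}(e_I)=\sum_J e_J\otimes x_{JI}$ through $\pi_1\otimes\mathrm{id}$, note that only the terms with $J=I\sigma$, $\sigma\in Stab(I)\backslash S_r$, contribute to the coefficient of $\pi_1(e_I)$, and use $\pi_1(e_{I\sigma})=(-1)^{s'(I,\sigma)}\pi_1(e_I)$ to turn $x_{I\sigma,I}$ into $x_{I\circ\sigma,I}$ before summing with the supertrace sign $(-1)^{|I|}$. The paper's proof is exactly this argument (stated tersely, with the symmetric-power case handled by the same symmetry you invoke), so your additional sign bookkeeping is just an expanded version of its final observation.
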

\begin{proof} We consider only the first equality, the second one is
symmetrical. It is clear that for any $I\in {\cal LI}(r)$ the
vector $\pi_1(e_I)$ appears in $\rho(\pi_1(e_I))$ only in terms
$$\sum_{\sigma\in Stab(I)\backslash S_r}
\pi_1(e_{I\sigma})\otimes x_{I\sigma, I}.$$ It remains to observe
that $\pi_1(e_{I\sigma})=(-1)^{s'(I, \sigma)}\pi_1(e_I)$.
\end{proof}
\begin{cor}
An invariant $C_r=Tr(\Lambda^r(E))$ has a unique leading summand which has
a weight $(1^r , 0^{m-r} |0^n)$ if $r\leq m$, and $(1^m| r-m, 0^{n-1})$ otherwise.
\end{cor}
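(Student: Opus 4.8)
The plan is to read the answer off the explicit formula for $C_r=Tr(\Lambda^r(E))$ in Lemma 5.5, to keep track of weights via Lemma 5.3, and then to invoke the ``largest weight'' half of Corollary 5.1. First I would record the $(1\times T)$-grading of $K[G]$: since this action is right translation by the torus, each generator satisfies $c_{ij}\in K[G]_{\varepsilon_j}$, where $\varepsilon_j$ denotes the $j$-th standard basis vector of $X(T)={\bf Z}^{m+n}$ (equivalently, apply Lemma 5.3 to $E$, noting $e_i\in E_{\varepsilon_i}$ and $\rho_E(e_i)=\sum_j e_j\otimes c_{ji}$, so that $c_{ji}\in K[G]_{\varepsilon_i}$). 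Hence a monomial $c_{IJ}=\prod_k c_{i_k j_k}$ is homogeneous of weight $\sum_k\varepsilon_{j_k}$ governed by its column indices, and every term $x_{I\circ\sigma,I}$ occurring in Lemma 5.5 has column multi-index $I$, hence weight $\mu(I):=\sum_k\varepsilon_{i_k}$, depending only on $I$. Since $I\mapsto\mu(I)$ is injective on ${\cal LI}(r)$, the weight-$\mu(I)$ homogeneous component of $C_r$ is exactly $(-1)^{|I|}\sum_{\sigma\in Stab(I)\backslash S_r}x_{I\circ\sigma,I}$.

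Next I would locate the largest weight that occurs. The weights $\mu(I)$, for $I\in{\cal LI}(r)$, are precisely the compositions $(\mu_1,\ldots,\mu_{m+n})$ of $r$ with $\mu_i\in\{0,1\}$ for $1\leq i\leq m$ (the even part of $I$ is strictly increasing, hence repetition-free) and $\mu_i\geq 0$ for $i>m$ (the odd part may repeat). All of them share the total degree $r$, so they are compared in the order on $X(T)$ by partial sums: on a fixed total degree the relevant (dominance) order has $\mu\leq\mu'$ exactly when $\sum_{i\leq k}\mu_i\leq\sum_{i\leq k}\mu'_i$ for every $k$. Maximising all partial sums under these constraints forces the weight into the lowest available indices, giving $\theta_r=(1^r,0^{m-r}|0^n)$ when $r\leq m$ and $\theta_r=(1^m|r-m,0^{n-1})$ when $r>m$; a direct check shows $\theta_r$ dominates every other $\mu(I)$. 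Thus $\theta_r$ is not merely maximal but the greatest weight occurring, realised by the single multi-index $I_{\max}=(1,\ldots,r)$ (resp. $(1,\ldots,m,m+1,\ldots,m+1)$), so its weight space in $\Lambda^r(E)$ is one-dimensional.

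Finally I would apply Corollary 5.1: because $\theta_r$ is the largest element among the weights of $V=\Lambda^r(E)$, that corollary identifies $Tr(\Lambda^r(E))_{\theta_r}$ as the \emph{unique} leading summand, which is the assertion. It remains only to check that this component does not vanish; since it equals $(-1)^{|I_{\max}|}\sum_{\sigma\in Stab(I_{\max})\backslash S_r}x_{I_{\max}\circ\sigma,I_{\max}}$ and its identity-coset term is (writing $I_{\max}=(i_1,\ldots,i_r)$) the monomial $\pm\,c_{i_1i_1}\cdots c_{i_ri_r}$, a product of diagonal generators that appears in no other term, no cancellation is possible. The one genuinely substantive step, and the place I would be most careful, is the combinatorial maximisation: I must verify that $\theta_r$ \emph{dominates} all the weights $\mu(I)$ rather than merely being maximal, since this is exactly what distinguishes the ``unique leading'' case of Corollary 5.1 from the plain ``leading'' one. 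The weight bookkeeping and the nonvanishing are then routine.
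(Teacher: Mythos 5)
Your proof is correct and takes exactly the route the paper intends: the corollary is stated there without proof, as an immediate consequence of Lemma 5.5 together with the ``largest weight'' case of Corollary 5.1, which is precisely your argument (identify the highest weight $\theta_r$ of $\Lambda^r(E)$, check it dominates all weights $\mu(I)$, and apply Corollary 5.1). Your explicit check that the $\theta_r$-component does not vanish --- needed because a leading summand is by definition non-zero --- is a detail the paper leaves implicit, and including it is a sound addition rather than a deviation.
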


Consider a dual $G$-supermodule $E^*$. By definition,
$\rho(e_i^*)=\sum_{1\leq j\leq m+n} e^*_j\otimes c^*_{ji},$ where
$e_i^*(e_k)=\delta_{ik}$ and $c^*_{ji}=(-1)^{|j|(|i|+|j|)}s_G(c_{ij})$ for 
$1\leq i, k\leq m+n$, see \cite{z}. Denote $Tr(S^r(E^*))$ by $D_r$.
\begin{cor}
The invariant $D_r$ has a unique leading summand which is of weight
$(0^m|0^{n-r}, (-1)^r)$ if $r\leq n$ and $(0^{m-1}, n-r|(-1)^n)$ otherwise.
\end{cor}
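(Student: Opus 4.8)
The plan is to apply Corollary 5.1 to the $G$-supermodule $V=S^r(E^*)$, so that computing the leading summand of $D_r=Tr(S^r(E^*))$ reduces entirely to locating the largest weight occurring among a homogeneous basis of $S^r(E^*)$. First I would record the weights. Since $|e_i^*|=|e_i|$, the superspace $S^r(E^*)$ has the same combinatorial basis as $S^r(E)$, namely $\{e_I^*=e_{i_1}^*\cdots e_{i_r}^* : I\in {\cal SI}(r)\}$. Restricting the coaction $\rho(e_i^*)=\sum_j e_j^*\otimes c_{ji}^*$ to the torus $T\subseteq G$ shows that each $e_i^*$ has $T$-weight $-\varepsilon_i$ (as expected for a dual module), where $\varepsilon_i$ denotes the $i$-th standard character of $X(T)={\bf Z}^{m+n}$. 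Hence $e_I^*$ has weight $-\omega(I)$, where $\omega(I)\in X(T)$ records, in coordinate $j$, the number of $k$ with $i_k=j$. By Lemma 5.3 the diagonal coefficient $d_{II}$ in $\rho_{S^r(E^*)}(e_I^*)=\sum_J e_J^*\otimes d_{JI}$ is homogeneous of weight $-\omega(I)$, and $Tr(S^r(E^*))=\sum_{I\in{\cal SI}(r)}(-1)^{|e_I^*|}d_{II}$.

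Next I would single out the unique largest weight $-\omega(I^*)$, equivalently the unique smallest $\omega(I)$ for $I\in{\cal SI}(r)$. In ${\cal SI}(r)$ the odd entries (those exceeding $m$) are pairwise distinct while the even entries (those at most $m$) may be repeated, and the odd coordinates $m+1,\ldots,m+n$ are the largest. Minimising $\omega(I)$ in the dominance order therefore amounts to pushing all the weight onto the largest admissible indices, which is the exact mirror of the maximisation carried out for $C_r$ in Corollary 5.2. For $r\le n$ this forces $I^*=(m+n-r+1,\ldots,m+n)$, so $\omega(I^*)=(0^m\,|\,0^{n-r},1^r)$ and the weight is $(0^m\,|\,0^{n-r},(-1)^r)$. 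For $r>n$ all $n$ odd indices must appear, and the remaining $r-n$ entries are forced to be copies of the largest even index $m$, so $\omega(I^*)=(0^{m-1},r-n\,|\,1^n)$ and the weight is $(0^{m-1},n-r\,|\,(-1)^n)$.

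The step that needs genuine care is to verify that $\omega(I^*)$ is not merely a minimal element but satisfies $\omega(I^*)\le\omega(I)$ for every $I\in{\cal SI}(r)$, so that $-\omega(I^*)$ is a largest (and not just maximal) weight and Corollary 5.1 is applicable. I would establish this with the partial-sum criterion: since the rightmost positions have capacity one (the odd indices are distinct) while only the even block tolerates repetitions, the distribution $I^*$ simultaneously maximises the tail sum past every truncation point, hence minimises every prefix sum; the same count shows that $I^*$ is the unique minimiser. Consequently the weight $-\omega(I^*)$ is attained by a single basis vector $e_{I^*}^*$, the corresponding weight space is one-dimensional, and $Tr(S^r(E^*))_{-\omega(I^*)}=\pm\, d_{I^*I^*}$ is nonzero because $\epsilon(d_{I^*I^*})=1$ by the counit axiom. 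Corollary 5.1 then delivers that this is the unique leading summand of $D_r$, of the asserted weight. The main obstacle is precisely this global-minimality-and-uniqueness check for $\omega(I^*)$; the weight bookkeeping and the non-vanishing of the leading coefficient are then automatic.
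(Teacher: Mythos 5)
Your proof is correct and takes essentially the same route as the paper: both identify the extremal weight of $S^r(E^*)$ (equivalently, the negative of the lowest weight of $S^r(E)$) from the combinatorics of ${\cal SI}(r)$ — all weight pushed onto the distinct odd indices, with any excess for $r>n$ on the index $m$ — and then invoke Corollary 5.1. You merely spell out what the paper leaves implicit, namely the tail-sum check that this weight is a largest (not just maximal) element and the counit argument for the non-vanishing of the leading coefficient.
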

\begin{proof} Every $e^*_i$ has a weight $(0,\ldots , 0,
\underbrace{-1}_{i-\mbox{th place}}, 0,\ldots , 0)$. Since the
lowest weight appearing in $S^r(E)$ is $(0^m|0^{n-r}, 1^r)$ if $r\leq n$ and $(0^{m-1}, r-n| 1^n)$ otherwise, 
the claim follows by Corollary 5.1.
\end{proof}
\bigskip

Let $f$ be an invariant from $R$. Denote by $M$ a $G\times
G$-subsupermodule generated by $f$. Since $M$ is finite-dimensional,
the ideal $$\Gamma=\bigcup_{(\mu^{\epsilon},\mu'), [M :
L(-\mu^{\epsilon})\otimes L(\mu')]\neq 0} ((\mu^{\epsilon},\mu')]$$
is finitely generated. Fix a Donkin-Koppinen filtration of
$O_{\Gamma}(K[G])$, say $O_{\Gamma}(K[G])=V_0\supseteq V_1\supseteq
V_2\supseteq\ldots$ as in Theorem 5.1.
\begin{lm}
For every $t\geq 0$, a superspace $(V_t/V_{t+1})^G$ is even and one-dimensional. Moreover, a non-zero basic vector from
$(V_t/V_{t+1})^G$ has a unique leading summand of weight
$\lambda_t$.
\end{lm}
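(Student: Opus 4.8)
The plan is to identify $(V_t/V_{t+1})^G$ as a morphism space between a standard and a costandard $G$-supermodule and then read off its leading $1\times T$-weight. By Theorem 5.1 and Lemma 3.2,
$$V_t/V_{t+1}\simeq V_-(\lambda_t)^*\otimes H^0_-(\lambda_t)\simeq H^0_+(-\lambda_t)\otimes H^0_-(\lambda_t),$$
a costandard $G\times G$-supermodule on which $G\times 1$ acts through the first tensor factor and $1\times G$ through the second. Since the coadjoint action $\nu_l$ factors through the diagonal embedding $G\to G\times G$, the symbol $G$ in $(V_t/V_{t+1})^G$ denotes this diagonal copy. As $V_-(\lambda_t)$ is a Weyl supermodule, hence finite-dimensional, standard super-linear algebra identifies $V_-(\lambda_t)^*\otimes H^0_-(\lambda_t)$ with $Hom\big(V_-(\lambda_t), H^0_-(\lambda_t)\big)$ as a $G$-supermodule under conjugation, so that
$$(V_t/V_{t+1})^G\simeq Hom_G\big(V_-(\lambda_t), H^0_-(\lambda_t)\big).$$

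First I would settle the dimension and parity. Here $V_-(\lambda_t)$ is standard and $H^0_-(\lambda_t)$ is costandard, both of highest weight $\lambda_t$ for the ordering attached to $B^-$. By the universal properties of standard and costandard objects (as invoked in the proof of Proposition 3.1), any nonzero morphism $V_-(\lambda_t)\to H^0_-(\lambda_t)$ must factor as $V_-(\lambda_t)\twoheadrightarrow L(\lambda_t)\hookrightarrow H^0_-(\lambda_t)$, and any two such differ by a scalar; hence $Hom_G(V_-(\lambda_t), H^0_-(\lambda_t))$ is one-dimensional. It is even because, by Proposition 3.2, the relevant highest weight vectors have parity $\epsilon=0$, so this morphism preserves ${\bf Z}_2$-degree. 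Therefore $(V_t/V_{t+1})^G$ is even and one-dimensional.

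Next I would carry out the weight computation. Recall that the $T$-supermodule structure on $K[G]$, and so on every subquotient of it, is the restriction of $\rho$ to $1\times T$; thus the leading weight of an element means its maximal $1\times T$-weight. As $1\times T$ acts only on the factor $H^0_-(\lambda_t)$, the $1\times T$-weights of $V_t/V_{t+1}$ are precisely the $T$-weights of $H^0_-(\lambda_t)$. Since $H^0_-(\lambda_t)$ is induced of highest weight $\lambda_t$, every weight $\mu$ occurring satisfies $\mu\leq\lambda_t$, and $\lambda_t$ occurs with multiplicity one; in particular $\lambda_t$ is the unique maximal $1\times T$-weight of $V_t/V_{t+1}$.

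Finally I would check that the invariant genuinely involves this top weight. Let $\bar w$ span $(V_t/V_{t+1})^G$, and let $\phi\colon V_-(\lambda_t)\to H^0_-(\lambda_t)$ be the nonzero morphism corresponding to it, so that, for a homogeneous basis $v_1,\ldots,v_s$ of $V_-(\lambda_t)$ with dual basis $v_1^*,\ldots,v_s^*$, one has $\bar w=\sum_i v_i^*\otimes\phi(v_i)$ up to a sign. The component of $\bar w$ in $1\times T$-weight $\lambda_t$ equals $\sum_i v_i^*\otimes\phi(v_i)_{\lambda_t}$. Now $\phi(V_-(\lambda_t))$ is a nonzero subsupermodule of $H^0_-(\lambda_t)$, hence contains its simple socle $L(\lambda_t)$ and thus a highest weight vector of weight $\lambda_t$; consequently some $\phi(v_i)$ has a nonzero $\lambda_t$-component, and the displayed sum is nonzero. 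Combined with the previous paragraph, this shows that $\lambda_t$ is the unique leading weight of $\bar w$, which is the claim. The main obstacle I anticipate is bookkeeping rather than conceptual: one must keep the $G\times 1$-weights carried by $V_-(\lambda_t)^*$ rigorously disjoint from the $1\times T$-weights carried by $H^0_-(\lambda_t)$, and track the Koszul signs in the super-linear-algebra identification so that the even, one-dimensional space of diagonal invariants is matched correctly with the canonical morphism $\phi$.
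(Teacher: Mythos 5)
Your proposal is correct and takes essentially the same approach as the paper: both identify $(V_t/V_{t+1})^G$ with $Hom_G(V_-(\lambda_t), H^0_-(\lambda_t))$, obtain one-dimensionality and evenness from the universal properties of the standard object $V_-(\lambda_t)$ and the costandard object $H^0_-(\lambda_t)$, and then decompose the invariant with respect to the right ($1\times T$) action, using that the image of the nonzero morphism contains the simple socle $L(\lambda_t)$ to see that the $\lambda_t$-component is nonzero while all other weights are $\leq\lambda_t$. The paper's proof is simply a terser rendering of this same argument, writing $g=\sum_{\mu\leq\lambda_t}\sum_i\phi_{i,\mu}\otimes v_{i,\mu}$ with $k_{\lambda_t}=1$ and $\phi_{1,\lambda_t}(V_-(\lambda_t))\neq 0$.
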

\begin{proof} A non-zero basic vector $g$ of $(V_t/V_{t+1})^G\simeq
Hom_G(V_-(\lambda_t), H^0_-(\lambda_t))$ can be represented as a sum
$\sum_{\mu\leq\lambda_t}\sum_{1\leq i\leq k_{\mu}}\phi_{i,
\mu}\otimes v_{i, \mu},$ where $\phi_{i, \mu}\in V_-(\lambda_t)^*$
and  $v_{i, \mu}$ runs over a basis of $H^0_-(\lambda_t)_{\mu}$.
Additionaly, $g_{\mu}=\sum_{1\leq i\leq k_{\mu}}\phi_{i, \mu}\otimes
v_{i, \mu}$ with respect to the right $T$-action, $k_{\lambda_t}=1$
and $\phi_{1, \lambda_t}(V_-(\lambda_t))\neq 0$.
\end{proof}

\begin{cor}
The superalgebra $R$ is even.
\end{cor}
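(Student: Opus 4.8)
The plan is to show that $R$ has no nonzero odd elements, i.e. that $R_1=0$. Since $R=K[G]^G$ is a ${\bf Z}_2$-graded subalgebra of $K[G]$, it suffices to take an arbitrary nonzero ${\bf Z}_2$-homogeneous invariant $f\in R$ and prove that $|f|=0$. For such an $f$ I would keep the notation set up just before Lemma 5.6: let $M$ be the finite-dimensional $G\times G$-subsupermodule generated by $f$, let $\Gamma$ be the associated finitely generated ideal, and fix the Donkin-Koppinen filtration $O_{\Gamma}(K[G])=V_0\supseteq V_1\supseteq V_2\supseteq\ldots$ from Theorem 5.1. By the very definition of $\Gamma$ the supermodule $M$ belongs to $\Gamma$, so $M\subseteq O_{\Gamma}(K[G])$ because $O_{\Gamma}(K[G])$ is the largest subsupermodule belonging to $\Gamma$; in particular $f\in V_0$.

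Next I would locate $f$ precisely in the filtration. Because $\bigcap_{t\geq 0}V_t=0$ and $f\neq 0$, the set $\{t\mid f\in V_t\}$ is a finite initial segment of the nonnegative integers, so there is a unique index $t$ with $f\in V_t$ and $f\notin V_{t+1}$. Let $\bar{f}$ denote the image of $f$ under the canonical projection $\pi_t:V_t\to V_t/V_{t+1}$; then $\bar{f}\neq 0$. Since $\pi_t$ is an even morphism of $G\times G$-supermodules, it is in particular equivariant for the diagonal copy of $G$ and parity preserving. As $f$ is fixed by the (co)adjoint, that is diagonal, action of $G$, so is $\bar{f}$; hence $\bar{f}$ is a nonzero element of $(V_t/V_{t+1})^G$.

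Finally I would invoke Lemma 5.6, which asserts that the superspace $(V_t/V_{t+1})^G$ is even. Therefore $\bar{f}$ is even, and since $\pi_t$ preserves parity we obtain $|f|=|\bar{f}|=0$. As $f$ was an arbitrary nonzero homogeneous invariant, every such invariant is even, whence $R=R_0$.

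The substantive content of this argument is carried entirely by Lemma 5.6 together with Theorem 5.1; the present statement is a formal consequence. The only points requiring minor care are the reduction to a finitely generated ideal $\Gamma$ — needed so that the good filtration of Theorem 5.1 is available — and the observation that passing to the single graded quotient $V_t/V_{t+1}$ in which $f$ survives preserves both the diagonal $G$-invariance and the parity of $f$. I do not anticipate a genuine obstacle here beyond this bookkeeping.
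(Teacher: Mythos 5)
Your proof is correct and is exactly the argument the paper intends: the corollary is stated without proof because it follows immediately from Lemma 5.6 once a nonzero homogeneous invariant $f$ is located in a quotient $V_t/V_{t+1}$ of the Donkin--Koppinen filtration, which is precisely the reduction you carry out. The bookkeeping you flag (gradedness of $R$, existence of the index $t$, and parity/equivariance of the projection) is handled correctly, so nothing is missing.
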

Let $\lambda=(\lambda_1,\ldots ,\lambda_m |\lambda_{m+1},\ldots
,\lambda_{m+n})\in X(T)^+$. As in \cite{z}, we denote
$(\lambda_1,\ldots ,\lambda_m)$ and $(\lambda_{m+1},\ldots
,\lambda_{m+n})$ by $\lambda_+$ and $\lambda_-$, respectively. An
invariant
$$f_{\lambda}=Ber(C)^{\lambda_{m+n}}\prod_{1\leq s\leq
n-1}(Ber(C)D_s)^{\lambda_{m+n-s}-\lambda_{m+n-s+1}}C_m^{|\lambda_-|+\lambda_m}\prod_{1\leq
t\leq m-1}C_t^{\lambda_t -\lambda_{t+1}}$$ has a unique leading
summand of weight $\lambda$.

If $k$ is sufficiently large, then $M\bigcap V_{k+1}=0$. In
particular, all weights of $f$ are among the weights of
$\bigoplus_{0\leq t\leq k} V_t/V_{t+1}$. Assume that $f\in
V_t\setminus V_{t+1}$ for $t\leq k$. By Lemma 5.5, $f$ has a leading
summand of weight $\lambda_t$. By Corollary 4.2 and Remark 5.1, the
invariant $f_{\lambda_t}$ also belongs to $V_t\setminus
V_{t+1}$. It follows that the leading summand of $f_{\lambda_t}$
coincides with the above leading summand of $f$ up to a non-zero
scalar. We will call $f* =f_{\lambda_t}$ a {\it companion} of $f$.

Denote by $A$ a free Laurent polynomial algebra $K[x_1^{\pm
1},\ldots , x_m^{\pm 1}, y_1^{\pm 1},\ldots , y_n^{\pm 1}]$. There is an
epimorphism of (super)algebras $\phi : K[G]\to A$, defined by
$c_{ij}\mapsto\delta_{ij}x_i$ if $i\leq m$ and $c_{ij}\mapsto\delta_{ij}y_i$ otherwise. 
The algebra $A$ can be obviously
identified with $K[T]$; that is $A$ has an obvious $T$-supermodule
structure. Moreover, for any $\lambda\in X(T)$ the epimorphism
$\phi$ maps $K[G]_{\lambda}$ to $A_{\lambda}$.
\begin{ter}(Chevalley's restriction theorem)
The restriction of $\phi$ on $R$ is a monomorphism.
\end{ter}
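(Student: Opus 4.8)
The plan is to exploit that $\phi$ is weight-preserving together with the companion construction developed above. First I would record that $\phi(K[G]_\mu)\subseteq A_\mu$ for every $\mu\in X(T)$, and that $A=K[T]=\bigoplus_{\mu\in X(T)}A_\mu$ with each $A_\mu$ one-dimensional, spanned by the monomial $\prod_{1\le i\le m}x_i^{\mu_i}\prod_{1\le j\le n}y_j^{\mu_{m+j}}$. Since distinct monomials are linearly independent in $A$, a relation $\phi(f)=0$ forces $\phi(f_\mu)=0$ for every weight component $f_\mu$ of $f$. Consequently, to prove $\ker(\phi|_R)=0$ it suffices to show that $\phi$ does not annihilate the leading summand of any nonzero invariant $f\in R$.

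Next I would reduce this to the explicit companions. By Corollary 5.3 the superalgebra $R$ is even, so a nonzero $f\in R$ has a well-defined leading weight $\lambda_t$; by Lemma 5.5 and the companion discussion preceding this theorem, the leading summand of $f$ is a nonzero scalar multiple of the leading summand of the companion $f_{\lambda_t}=Ber(C)^{\lambda_{m+n}}\prod_s(Ber(C)D_s)^{\cdots}C_m^{\cdots}\prod_t C_t^{\cdots}$. Because $\phi$ preserves weights, the weight-$\lambda_t$ component of $\phi(f)$ equals, up to a nonzero scalar, the weight-$\lambda_t$ component of $\phi(f_{\lambda_t})$, so everything reduces to checking that $\phi$ preserves the leading summand of this explicit companion.

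For this I would argue factor by factor, using that $A$ is an integral domain. Since $\phi$ kills every off-diagonal $c_{ij}$ and sends $c_{ii}$ to a variable, only the diagonal monomials $c_{II}$ survive, so I must confirm that each basic invariant contains such a surviving diagonal term in its leading summand. Directly, $\phi(Ber(C))=(x_1\cdots x_m)(y_1\cdots y_n)^{-1}\neq 0$; for $C_r=Tr(\Lambda^r(E))$ and $D_s=Tr(S^s(E^*))$ the formulas of Lemma 5.4 show that the identity coset contributes the diagonal term $\pm\phi(x_{II})$ while all other coset representatives give off-diagonal monomials, so $\phi(C_r)$ and $\phi(D_s)$ are nonzero and, by Corollaries 5.1 and 5.2, their leading summands map to nonzero monomials of the stated weights. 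Since $\phi$ is an algebra homomorphism and $A$ has no zero divisors, the top-weight term of the product $\phi(f_{\lambda_t})$ is the product of the top-weight terms of the factors, hence a nonzero monomial of weight $\lambda_t$. Therefore $\phi(f)\neq 0$, and $\phi|_R$ is injective.

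The main obstacle I anticipate lies in the factor-by-factor analysis of the last paragraph: one must verify that $\phi$ genuinely preserves the leading summand of each generator $Ber(C)$, $C_r$, $D_s$, that is, that each leading summand involves the surviving diagonal monomial rather than being a combination of purely off-diagonal terms, and only then invoke the no-cancellation-at-top principle afforded by the grading and the absence of zero divisors in $A$. The weight-grading compatibility and the domain property make that final step clean, so the genuine work is the bookkeeping that identifies the surviving diagonal monomials through Lemma 5.4 and Corollaries 5.1–5.2.
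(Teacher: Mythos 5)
Your proof is correct and takes essentially the same route as the paper's: reduce injectivity to showing $\phi$ does not kill the leading summand of a nonzero invariant, replace that leading summand (up to a nonzero scalar) by the leading summand of the explicit companion $f_{\lambda_t}$, and conclude because the images of the leading summands of $Ber(C)$, the $C_r$ and the $D_s$ are nonzero and $A$ is an integral domain. Your factor-by-factor verification via the surviving diagonal monomials of Lemma 5.4 simply makes explicit what the paper asserts in one line.
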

\begin{proof} Let $f\in R\setminus 0$. It suffices to prove that the
image of a leading term of $f$ is non-zero. This is clear if the
image of the leading summand of the companion of $f$ is non-zero.
Since $A$ is an integral domain, the images of leading summands of $C_1,\ldots , C_m,
D_1,\ldots , D_{n-1}$ and $Ber(C)$ are all non-zeroes. Use the above
product representation of $f*$ to finish the proof.
\end{proof}
\bigskip

Denote the images of $C_r$ and $D_r$ in $A$ by $c_r$ and $d_r$
correspondingly. It is clear that
$$c_r(x_1,\ldots , x_m, y_1,\ldots , y_n)=c_r(x|y)=\sum_{0\leq
i\leq \min\{r, m\}}(-1)^{r-i}\sigma_i(x)p_{r-i}(y),$$ where
$\sigma_i(x)$ and $p_j(y)$ are elementary and complete symmetric
polynomials, respectively.

A subalgebra of $A$ consisiting of polynomials
$f(x|y)=f(x_1,\ldots , x_m, y_1,\ldots , y_n)$, symmetric in variables
$x$ and $y$ separately, and such that
$\frac{d}{dt}(f|_{x_1=y_1=t})=0$, is denoted by $A_s$. In the case of $char
K=0$, this algebra has already been considered in \cite{ktr}
and \cite{stem} (it was called an algebra of pseudosymmetric and
supersymmetric polynomials, respectively). It was proved in \cite{stem} that $A_s$
is generated by the elements $c_r$. Since $\phi(R_{pol})\subseteq
A_s$ (see \cite{ktr}), it implies that $\phi(R_{pol})=A_s$ and
$R_{pol}$ is generated by  the elements $C_r$. 
In the case of positive characteristic, the generators of $R_{pol}$
are unknown. However, in any characteristic it holds
$\phi(R_{pol})\subseteq A_s$ by the same arguments as in
\cite{ktr}.

As in \cite{z}, denote the largest polynomial subsupermodule of
$H^0_-(\lambda)$ by $\nabla(\lambda)$. Recall that
$\nabla(\lambda)\neq 0$ if and only if $L(\lambda)\neq 0$ is polynomial if
and only if $\lambda\in X(T)^{++}$. The subset $X(T)^{++}\subseteq X(T)^+$ was
completely described in \cite{bkl1}. Let ${\cal V}=\{V\}$ be a
collection of polynomial $G$-supermodules. The collection ${\cal V}$ is called {\it good}
if and only if for any $\lambda\in X(T)^{++}$ there is $V\in {\cal V}$ such
that $\lambda$ is the highest weight of $V$. For example, a
collection of all simple polynomial $G$-supermodules is good. The
collection $\{\nabla(\lambda)\}_{\lambda\in X(T)^{++}}$ is also
good.

Denote by $X(T)_{\geq 0}$ the subset ${\bf N}^{m+n}\subseteq
X(T)={\bf Z}^{m+n}$ and by $X(T)_{\geq 0}^{+}$ the intersection
$X(T)_{\geq 0}\bigcap X(T)^{+}$.
\begin{lm}
If $\mu\in X(T)^+_{\geq 0}$, then the set $(\mu]\bigcap
X(T)^+_{\geq 0}$ is finite.
\end{lm}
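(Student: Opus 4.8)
The plan is to reduce the claim to a bounded-coordinates argument by exploiting the one feature of the order on $X(T)$ that survives in the super setting: it preserves the sum of the coordinates. Recall that for $\nu,\mu\in X(T)={\bf Z}^{m+n}$ the relation $\nu\leq\mu$ holds exactly when $\mu-\nu$ is a non-negative integral combination of vectors of the form $\varepsilon_i-\varepsilon_j$ with $1\leq i,j\leq m+n$, where $\varepsilon_1,\ldots,\varepsilon_{m+n}$ denotes the standard basis of ${\bf Z}^{m+n}$. Since every such vector $\varepsilon_i-\varepsilon_j$ has coordinate sum zero, I would first record that $\nu\leq\mu$ forces $\sum_{1\leq i\leq m+n}\nu_i=\sum_{1\leq i\leq m+n}\mu_i$. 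This is the only place where the root system of $GL(m|n)$ is used, and the conclusion is insensitive to the choice of Borel subsupergroup, hence to which of the two orders on $X(T)$ is in force, because all roots have the shape $\varepsilon_i-\varepsilon_j$.

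Set $N=\sum_{1\leq i\leq m+n}\mu_i$; since $\mu\in X(T)^+_{\geq 0}$ all its coordinates are non-negative, so $N\geq 0$. Next I would take any $\nu\in(\mu]\bigcap X(T)^+_{\geq 0}$: on the one hand $\nu\leq\mu$ gives $\sum_{1\leq i\leq m+n}\nu_i=N$, and on the other hand $\nu\in X(T)_{\geq 0}$ gives $\nu_i\geq 0$ for every $i$. Combining the two constraints yields $0\leq\nu_i\leq N$ for each $i$, so that $\nu$ ranges over the finite set $\{0,1,\ldots,N\}^{m+n}$. This already bounds the cardinality of $(\mu]\bigcap X(T)^+_{\geq 0}$ by $(N+1)^{m+n}$, which is finite.

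There is essentially no serious obstacle here: the whole content is the two elementary inequalities above, and even the dominance conditions $\nu_1\geq\cdots\geq\nu_m$ and $\nu_{m+1}\geq\cdots\geq\nu_{m+n}$ are not needed for finiteness (they only sharpen the count). The single point deserving a line of justification is the sum-preservation property, so the main thing I would verify is that the order relevant to $\Lambda$ in the present situation is indeed generated by the root vectors $\varepsilon_i-\varepsilon_j$; once that is in place, non-negativity does all the remaining work.
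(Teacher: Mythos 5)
Your proof is correct and takes essentially the same route as the paper's: both arguments confine $(\mu]\bigcap X(T)^+_{\geq 0}$ inside a finite box of lattice points, the only input being that the order on $X(T)$ is generated by roots $\varepsilon_i-\varepsilon_j$ and hence controls the coordinates of any $\nu\leq\mu$. The paper's one-line bound $(\mu_1+1)^m\times(|\mu_+|+\mu_{m+1}+1)^n$ implicitly uses the partial-sum inequalities of the dominance order together with the dominance conditions defining $X(T)^+$, whereas you use only conservation of the total coordinate sum plus nonnegativity, arriving at the cruder but equally sufficient bound $(|\mu|+1)^{m+n}$.
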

\begin{proof} It follows from 
$$
|(\mu]\bigcap
X(T)^+_{\geq 0}|\leq (\mu_1+1)^m\times (|\mu_+|+\mu_{m+1}+1)^n.
$$
\end{proof}

\begin{ter}
The algebra $R_{pol}$ is generated (as a vector space) by $Tr(V)$,
where $V$ runs over any good collection of $G$-supermodules.
\end{ter}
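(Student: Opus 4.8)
The plan is a leading-weight reduction, running an induction against the Donkin-Koppinen filtration. First the easy inclusion: let $V$ be a polynomial $G$-supermodule. By Lemma 5.2 its supertrace $Tr(V)$ lies in $K[G]^G=R$, and since the coefficients $f_{ij}$ of a polynomial supermodule lie in the polynomial subsuperalgebra $K[c_{ij}]\subseteq K[G]$, the invariant $Tr(V)$ is itself polynomial, so $Tr(V)\in R_{pol}$. Hence the span of $\{Tr(V)\mid V\in\mathcal V\}$ is contained in $R_{pol}$. For each $\lambda\in X(T)^{++}$ fix a representative $V_\lambda\in\mathcal V$ whose highest (hence unique maximal) weight is $\lambda$; this exists because $\mathcal V$ is good. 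By Corollary 5.1 (the ``largest element'' case) $Tr(V_\lambda)$ has a unique leading summand, of weight $\lambda$, and it is nonzero for the collections in question, the highest weight space being one-dimensional of a fixed parity.

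For the reverse inclusion I show that every $0\neq f\in R_{pol}$ is a finite $K$-combination of the $Tr(V_\lambda)$, by induction on the leading weight of $f$. The crucial point is that the leading weight of a polynomial invariant necessarily lies in $X(T)^{++}$. To prove this, enlarge $\Gamma$ so that $O_\Gamma(K[G])$ contains the $G\times G$-submodules generated by $f$ and by $Tr(V_\lambda)$, and fix a Donkin-Koppinen filtration $V_0\supseteq V_1\supseteq\cdots$ as in Theorem 5.1. If $f\in V_t\setminus V_{t+1}$, then, as in the companion discussion preceding Theorem 5.3, the leading weight of $f$ equals $\lambda_t$ and the image of $f$ generates a nonzero $G\times G$-submodule of $V_t/V_{t+1}\simeq V_-(\lambda_t)^*\otimes H^0_-(\lambda_t)=H^0_+(-\lambda_t)\otimes H^0_-(\lambda_t)$. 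This costandard object of $G\times G$-mod has simple socle $L(\lambda_t^\epsilon)\otimes L(\lambda_t)$ by Lemma 3.5, so $L(\lambda_t)$ occurs as a composition factor of the submodule generated by $f$. As $f$ is polynomial, that submodule lies in $K[c_{ij}]$, whose $G\times G$-composition factors are all of the form $L(\mu)\otimes L(\nu)$ with $\mu,\nu$ polynomial; hence $L(\lambda_t)$ is polynomial, i.e. $\lambda_t\in X(T)^{++}$.

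The inductive step is now immediate. With $\lambda=\lambda_t\in X(T)^{++}$, both $f$ and $Tr(V_\lambda)$ are invariants of leading weight $\lambda$, so each projects to a nonzero element of $(V_t/V_{t+1})^G$ in the unique layer with $\lambda_t=\lambda$ (unique by the multiplicity-one statement of Remark 5.1). Since that superspace is one-dimensional (the lemma computing $(V_t/V_{t+1})^G$), the two projections are proportional, so there is a scalar $c$ with $f-c\,Tr(V_\lambda)\in V_{t+1}\cap R_{pol}$, an element of strictly smaller leading weight which, by the previous paragraph, again lies in $X(T)^{++}$. All leading weights arising this way sit in $(\lambda]\cap X(T)^+_{\geq 0}$, a finite set by the finiteness lemma (that $(\mu]\cap X(T)^+_{\geq 0}$ is finite), so the strictly decreasing chain of leading weights terminates and the reduction reaches $0$. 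This writes $f$ as a combination of the $Tr(V_\lambda)$, proving the spanning statement; having pairwise distinct leading weights, the $Tr(V_\lambda)$ are moreover linearly independent.

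The main obstacle is the second paragraph: converting the combinatorial datum ``$\lambda$ is the leading weight of a polynomial invariant'' into the representation-theoretic condition $\lambda\in X(T)^{++}$. This hinges on correctly locating $f$ in its Donkin-Koppinen layer, identifying the simple socle of that layer through Lemma 3.5, and invoking that the polynomial part $K[c_{ij}]$ carries only polynomial composition factors. A secondary subtlety, which is why one works with representatives whose highest weight space is one-dimensional of a fixed parity (for instance the simple polynomial supermodules, or the $\nabla(\lambda)$), is to guarantee that the leading summand $Tr(V_\lambda)_\lambda$ does not vanish through supertrace cancellation.
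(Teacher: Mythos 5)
Your strategy is the same as the paper's: place $f$ in a layer of a Donkin--Koppinen filtration, show the layer weight $\lambda_t$ lies in $X(T)^{++}$, subtract a multiple of $Tr(V_{\lambda_t})$ using the one-dimensionality of $(V_t/V_{t+1})^G$, and iterate. Your second paragraph, which routes the polynomiality of $\lambda_t$ through the simple socle of $H^0_+(-\lambda_t)\otimes H^0_-(\lambda_t)$ (Lemma 3.5), is a sound variant of the paper's argument that the factor $H^0_-(\lambda_t)$ must have non-zero polynomial part; only two phrasing slips there: the subobject of $V_t/V_{t+1}$ generated by the image of $f$ is a \emph{quotient} of $M\subseteq K[c_{ij}]$, not a subobject of $K[c_{ij}]$ (its composition factors are still among those of $M$, so the conclusion stands), and for a composition factor $L(\mu)\otimes L(\nu)$ of $K[c_{ij}]$ only the right-hand index $\nu$ is polynomial, since the left copy of $G$ acts through the antipode --- but you only use $\nu$.

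The genuine gap is the induction in your third paragraph. The order on $X(T)$ is only partial, so ``the leading weight of $f$'' is not well defined: Lemma 5.5 shows that $\lambda_t$ is \emph{a} maximal weight of $f$, but $f$ may have further maximal weights incomparable to $\lambda_t$ (their weight components simply lie in $V_{t+1}$). Consequently $f-c\,Tr(V_{\lambda_t})$ need not have ``strictly smaller leading weight,'' and its maximal weights need not lie in $(\lambda_t]$. Concretely, if $\mu,\nu\in X(T)^{++}$ are incomparable (such pairs exist; e.g.\ in characteristic $0$ the weights $(3,3|0)$ and $(4,1|1)$ for $GL(2|1)$) and $f=Tr(L(\mu))+Tr(L(\nu))$ with the $\mu$-layer occurring first in the filtration, then the first reduction leaves $Tr(L(\nu))$, whose leading weight $\nu$ is neither smaller than $\mu$ nor in $(\mu]$; so your chain is not decreasing and escapes $(\lambda]\cap X(T)^+_{\geq 0}$, and the stated termination argument collapses. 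The repair is exactly the bookkeeping in the paper's proof: let $\Gamma'$ be the finitely generated ideal generated by \emph{all} weights of $f$; then the weights of every remainder stay inside $\Gamma'\cap X(T)_{\geq 0}$ (the weights of each $Tr(V_{\lambda_{t_i}})$ lie in $(\lambda_{t_i}]\subseteq\Gamma'$), the successive layer weights $\lambda_{t_0},\lambda_{t_1},\ldots$ are pairwise distinct because the indices $t_i$ strictly increase and each $V_-(\lambda)^*\otimes H^0_-(\lambda)$ occurs at most once (Remark 5.1), and they all lie in the \emph{finite} set $\Gamma'\cap X(T)^+_{\geq 0}$ (Lemma 5.7). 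Finiteness of that set, not a descending chain, is what forces the reduction to reach $0$. A secondary point: since changing $\Gamma$ changes the filtration, you should enlarge $\Gamma$ once at the outset so that $O_{\Gamma}(K[G])$ contains the subsupermodules generated by $f$ and by all $Tr(V_\mu)$ with $\mu\in\Gamma'\cap X(T)^{++}$, rather than layer by layer during the induction.
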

\begin{proof} Consider $f\in R_{pol}$. As above, let $M$ be a $G\times
G$-subsupermodule generated by $f$ and $M\subseteq
O_{\Gamma}(K[G])$ for a finitely generated ideal $\Gamma$.
In that case $M\subseteq V_t$ for some $t\leq k$ and $M\bigcap V_{k+1}=0$, 
where $\{V_i\}_{i\geq 0}$ is a Donkin-Koppinen filtration of
$O_{\Gamma}(K[G])$. Denote by $\Gamma'$ an ideal generated by (finitely many) $\mu$
such that $f_{\mu}\neq 0$. It is obvious that
$\Gamma'$ is also generated by leading weights of $f$. Since
$M|_{1\times G}$ contains a polynomial $G$-subsupermodule
(generated by $f$), the right hand side
factor $H_-^0(\lambda_t)$ in a quotient $V_t/V_{t+1}\simeq
V_-(\lambda_t)^*\otimes H_-^0(\lambda_t)$ has non-zero polynomial part. In other
words, 
there is $V\in {\cal V}$ such that its highest weight is $\lambda_t$
and $\lambda_t\in X(T)^{++}$. Again, $Tr(V)\in V_t\setminus
V_{t+1}$ and for a non-zero scalar $a$, all weights of polynomial
invariant $f-aTr(V)$ belong to $(\Gamma'\bigcap X(T)^+_{\geq
0})\setminus\{\lambda_t\}$. Lemma 5.7 concludes the proof.
\end{proof}
\bigskip

Let $V$ be a $G$-supermodule with a basis as in Lemma 5.3. The
(Laurent) polynomial $\chi(V)=\phi(Tr(V))=\sum_{1\leq i\leq
t}(-1)^{|v_i|}\phi(c_{ii})$ is called a {\it formal supercharacter}
of $V$. It is clear that $\chi(V)=\sum_{\lambda\in X(T)}(\dim
(V_{\lambda})_0-\dim (V_{\lambda})_1)x^{\lambda_+}y^{\lambda_-}$,
where $x^{\lambda_+}=\prod_{1\leq i\leq m}x_i^{\lambda_i}$ and
$y^{\lambda_-}=\prod_{m+1\leq i\leq m+n}y_{i-m}^{\lambda_i}$.

\begin{ex}(see \cite{zm1})
Set $m=n=1$. Every simple polynomial $GL(1|1)$-supermodule is at most
two-dimensional. More precisely, set $X(T)^{++}=\bigcup_{r\geq 0}
X(T)^{++}_r$, where
$X(T)^{++}_r=\{\lambda=(\lambda_1|\lambda_2)\in
X(T)^{++}||\lambda|=r\}$. If $p|r$, then $X(T)^{++}_r=\{(i,
r-i)|0\leq i\leq r\}$ and $L(i)=L(i|r-i)$ is (even) one-dimensional supermodule. 
In particular, $Tr(L(i))=x_1^i y_1^{r-i}\in A_s$. Otherwise, $X(T)^{++}_r=\{(i, r-i)|1\leq i\leq
r\}$ and $L(i)=L(i|r-i)$ is a two-dimensional supermodule. By
definition, its highest vector is even and the second basic vector
is odd of weight $(i-1|r-i+1)$. Thus $Tr(L(i))=x_1^i
y_1^{r-i}-x_1^{i-1} y_1^{r-i+1}\in A_s$.
\end{ex}
A homogeneous polynomial $f(x|y)=\sum_{\lambda\in
X(T)}a_{\lambda}x^{\lambda_+} y^{\lambda_-}$ is said to be
$p$-{\it balanced} if and only if for every $\lambda$ such that $a_{\lambda}\neq 0$
and every $1\leq i\leq m < j\leq m+n$ it satisfies
$p|(\lambda_i+\lambda_j)$. In the example above, all $Tr(L(i))$ are
$p$-balanced, provided $p|r$. In general, if
$f$ is $p$-balanced and symmetric in variables $x$ and $y$ separately, then
$f\in A_s$. Moreover, $p$-balanced polynomials from $A_s$ form a
subalgebra $A_s(p)$.
\begin{pr}
The algebra $A_s(p)$ is generated by the elements $\sigma_i(x)^p,
\sigma_j(y)^p$ for $1\leq i\leq m$ and $1\leq j\leq n$, and by the elements
$u_k(x|y)=\sigma_m(x)^k\sigma_n(y)^{p-k}$ for $0 <k <p$. Moreover,
$A_s(p)\subseteq\phi(R_{pol})$.
\end{pr}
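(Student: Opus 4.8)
The plan is to establish the two assertions in turn: first that the listed elements generate $A_s(p)$, and then that each of them lies in $\phi(R_{pol})$. Since $\phi(R_{pol})$ is a subalgebra of $A$, the inclusion $A_s(p)\subseteq\phi(R_{pol})$ follows by combining the two facts.

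For the generation statement I would separate an arbitrary $g\in A_s(p)$ according to residues modulo $p$. As $m,n\geq 1$, a $p$-balanced monomial $x^{\alpha}y^{\beta}$ must have all $x$-exponents congruent to a common $a\in\{0,\ldots,p-1\}$ and all $y$-exponents congruent to $-a$ modulo $p$; grouping the monomials of $g$ by this shift $a$ gives $g=\sum_{a=0}^{p-1}g_a$, where each $g_a$ is separately symmetric and $p$-balanced, because the projection onto a fixed shift commutes with the action of $S_m\times S_n$. When $a=0$ every exponent of $g_0$ is divisible by $p$; rewriting each monomial in the variables $x_i^p,y_j^p$ and using the fundamental theorem of symmetric functions for Laurent polynomials together with the Frobenius identity $\sigma_i(x)^p=\sigma_i(x_1^p,\ldots,x_m^p)$ shows that $g_0$ is a polynomial in the $\sigma_i(x)^p$ and $\sigma_j(y)^p$ (with $\sigma_m(x)^p,\sigma_n(y)^p$ inverted). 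For $1\leq a\leq p-1$ the monomial $u_a=\sigma_m(x)^a\sigma_n(y)^{p-a}$ is invertible in $A$, and $g_a u_a^{-1}$ is separately symmetric with all exponents divisible by $p$, hence lies in the same subalgebra by the case $a=0$; thus $g_a=u_a\cdot(\text{polynomial in }\sigma_i(x)^p,\sigma_j(y)^p)$. Collecting the pieces shows $g$ lies in the algebra generated by the $\sigma_i(x)^p$, the $\sigma_j(y)^p$ and the $u_k$ with $0<k<p$. That each generator belongs to $A_s(p)$ is immediate, since each is separately symmetric and $p$-balanced.

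For the inclusion into $\phi(R_{pol})$, I would first record that, by Theorem 5.2, $\phi(R_{pol})$ is the $K$-span of the formal supercharacters $\chi(V)=\phi(Tr(V))$ of polynomial $G$-supermodules; as the characters of simple modules are unitriangular in the dominance order, $\phi(R_{pol})=\mathrm{span}_K\{\chi(L(\mu)):\mu\in X(T)^{++}\}$, and $\phi(R_{pol})\subseteq A_s$. By the generation statement it suffices to place $\sigma_i(x)^p,\sigma_j(y)^p$ and $u_k$ in this span. Each is a monomial symmetric function $m_{\lambda}$ attached to an explicit dominant $p$-balanced weight, namely $\lambda=(p^i,0^{m-i}\,|\,0^n)$ for $\sigma_i(x)^p$, $\lambda=(0^m\,|\,p^j,0^{n-j})$ for $\sigma_j(y)^p$, and $\lambda^{(k)}=(k^m\,|\,(p-k)^n)$ for $u_k$; in every case $m_{\lambda}$ is, up to a nonzero scalar, the leading (highest weight) term of $\chi(L(\lambda))$. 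The key inputs I would then invoke are the characteristic-$p$ description of $X(T)^{++}$ from \cite{bkl1} and the weight structure of the associated simple supermodules: I would check that each of these weights lies in $X(T)^{++}$ (all have total degree divisible by $p$, exactly the condition already visible for $GL(1|1)$ in Example 5.2), and that for a $p$-balanced $\lambda\in X(T)^{++}$ the module $L(\lambda)$ has only $p$-balanced weights. Granting this, $\chi(L(\lambda))\in A_s(p)$, so subtracting a scalar multiple of $\chi(L(\lambda))$ from a generator removes its leading term while remaining inside $A_s(p)$. Since our generators have non-negative exponents and only finitely many dominant weights lie below a fixed one in this range (as in Lemma 5.7), a downward induction on the leading weight expresses each generator as a $K$-combination of characters $\chi(L(\mu))$, $\mu\in X(T)^{++}$, hence places it in $\phi(R_{pol})$.

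The hard part is the intermediate claim that a $p$-balanced highest weight in $X(T)^{++}$ yields a simple supermodule all of whose weights are again $p$-balanced. This is a Steinberg/Frobenius-twist phenomenon: for the even subgroup it amounts to $L(p\kappa)\cong L(\kappa)^{[1]}$, whose weights are $p$-scaled and hence $p$-balanced, but for $GL(m|n)$ the odd root directions a priori break $p$-balancedness, so the argument must rely on the precise classification of polynomial dominant weights in positive characteristic (\cite{bkl1}) and on the Frobenius-kernel formalism for the supergroup. This is the step I expect to demand the most care; once it is secured, the leading-term reduction together with the generation statement yields $A_s(p)\subseteq\phi(R_{pol})$.
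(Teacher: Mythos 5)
Your first half (the generation of $A_s(p)$) is essentially correct and coincides with the paper's own argument: the paper decomposes $f\in A_s(p)$ into $S_m\times S_n$-orbit sums, observes that $p$-balancedness forces $\lambda=p\lambda'+(k^m|(-k)^n)$, and factors $f=u_k\,g(x|y)^p$; your decomposition by the residue $a$ modulo $p$ and the factorization of each $g_a$ as $u_a$ times a polynomial in the $\sigma_i(x)^p,\sigma_j(y)^p$ is the same computation. One correction: no localization is needed, and if it really were needed your argument would prove the wrong statement (generation of a Laurent algebra rather than of $A_s(p)$). Since a non-negative integer congruent to $a$ (respectively $p-a$) modulo $p$ with $0<a<p$ is at least $a$ (respectively $p-a$), the quotient $g_a/u_a$ is already an honest polynomial in $x_1^p,\ldots,x_m^p,y_1^p,\ldots,y_n^p$, and the fundamental theorem of symmetric functions together with $\sigma_i(x_1^p,\ldots,x_m^p)=\sigma_i(x)^p$ finishes; the parenthetical remark about inverting $\sigma_m(x)^p,\sigma_n(y)^p$ should be deleted.

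The second half contains a genuine gap, one that you yourself flag but do not close. Your reduction rests on two unproved claims: first, that for a $p$-balanced $\lambda\in X(T)^{++}$ every weight of $L(\lambda)$ is again $p$-balanced (you call this the hard part and defer it); and second --- a point you do not even isolate --- that at every step of your downward induction the leading weight of the remainder again lies in $X(T)^{++}$, since otherwise there is no polynomial simple character available to subtract. The second point is not automatic: $X(T)^{++}$ is a proper subset of the non-negative dominant weights (already for $GL(1|1)$ one has $(0|r)\notin X(T)^{++}$ when $p\nmid r$, see Example 5.2), so ``dominant, non-negative and $p$-balanced implies polynomial'' is itself something to prove. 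Both claims are true, but the natural route to them is a Steinberg tensor product/Frobenius-twist theorem for $GL(m|n)$, machinery that the paper never develops and that you do not supply. The paper's own proof needs none of this: it produces explicit preimages under $\phi$. Because the elements $c_{ij}^p$ with $|i|+|j|\equiv 0\pmod 2$ generate a Hopf subsuperalgebra of $K[G]$, the Frobenius powers $\sigma_i(C_{00})^p,\sigma_j(C_{11})^p$ of the characteristic-polynomial coefficients lie in $R_{pol}$ and map to $\sigma_i(x)^p,\sigma_j(y)^p$; and $u_k=\phi(\sigma_n(C_{11})^pBer(C)^k)$, where $\sigma_n(C_{11})^pBer(C)^k=Tr(L(k^m|(p-k)^n))$ is the supertrace of the one-dimensional supermodule $L(0|p^n)\otimes Ber(E)^k$, polynomial by Theorem 6.5 of \cite{bkl1}. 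The inclusion $A_s(p)\subseteq\phi(R_{pol})$ then follows at once from the (correct) generation statement, with no induction over weights of simple supermodules at all. As written, your proposal is therefore incomplete, and completing it along your lines would require substantially heavier input than the paper's direct construction.
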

\begin{proof} The elements $c_{ij}^p$ , where $|i|+|j|=0\pmod 2$,
generate a Hopf subsuperalgebra of $K[G]$. In fact,
$\delta_G(c_{ij}^p)=\sum_{k, |k|=|i|}c_{ik}^p\otimes c_{kj}^p$ and 
$s_G(c_{ij}^p)=s_G(c_{ij})^p$. 
If $\sigma_i(C_{00})$ and 
$\sigma_j(C_{11})$ are $i$-th and $j$-th coefficients of the
characteristic polynomials of the blocks $C_{00}$ and $C_{11}$
respectively, then 
$\sigma_i(C_{00})^p$ and $\sigma_j(C_{11})^p$ belong to $R_{pol}$ for $1\leq i\leq m$ and $1\leq j\leq n$.
Moreover, $\phi(\sigma_i(C_{00})^p)=\sigma_i(x)^p$ and
$\phi(\sigma_j(C_{11})^p)=\sigma_j(y)^p$. Furthermore, the element
$\sigma_n(C_{11})^p$ is group-like. In particular, it can be
naturally identified with a one-dimensional simple $G$-supermodule
$L(0|p^n)$ (see Proposition 6.2 of \cite{z}). Therefore,
$L(k^m|(p-k)^n)=L(0|p^n)\otimes Ber(E)^{k}$ is also
one-dimensional simple $G$-supermodule, whose formal character
coincides with $u_k(x|y)$. By Theorem 6.5 of \cite{bkl1},
$L(k^m|(p-k)^n)$ is a polynomial $G$-supermodule. It follows that
$\sigma_n(C_{11})^{p}Ber(C)^k=Tr(L(k^m|(p-k)^n))\in R_{pol}$ and
$u_k\in\phi(R_{pol})$.

Finally, consider a homogeneous polynomial $f(x|y)=\sum_{\mu\in
X(T)_{\geq 0}, |\mu|=r}f_{\mu}x^{\mu_+}y^{\mu_-}$ from $A_s(p)$.
Without a loss of generality we can assume that there is
$\lambda\in X(T)^+_{\geq 0}$ such that if $f_{\mu}\neq 0$, then
$\mu$ is equal to $\lambda$ up to a permutation of its first $m$
and last $n$ coordinates separately. Since $f(x|y)$ is
$p$-balanced, it follows that $\lambda=p\lambda' +(k^m| (-k)^n)$,
where $\lambda'\in X(T)^+_{\geq 0}$ such that $\lambda'_{m+n} > 0$ and
$0 < k <p$. In other words, $f(x|y)=u_k(x|y)g(x|y)^p$ and $g(x|y)$
is symmetrical in variables $x$ and $y$ separately.
\end{proof}

\begin{hyp}
The algebra $R_{pol}$ is generated by the elements 
$$C_r,
\sigma_i(C_{00})^p, \sigma_j(C_{11})^p, \sigma_n(C_{11})^p
Ber(C)^k$$ for $r\geq 0, 1\leq i\leq m, 1\leq j\leq n$ and $0 < k < p$.
\end{hyp}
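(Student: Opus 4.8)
The plan is to transfer the problem, via Chevalley's restriction theorem, into a purely combinatorial question about supersymmetric polynomials and then attack it by a Frobenius--Steinberg reduction. By Theorem 5.2 the map $\phi$ is injective on $R$, so $R_{pol}$ is generated by the listed elements precisely when $\phi(R_{pol})$ is generated by their images. These images are $\phi(C_r)=c_r$, $\phi(\sigma_i(C_{00})^p)=\sigma_i(x)^p$, $\phi(\sigma_j(C_{11})^p)=\sigma_j(y)^p$ and $\phi(\sigma_n(C_{11})^pBer(C)^k)=u_k(x|y)$, as computed in the proof of Proposition 5.1. Write $B$ for the subalgebra of $A$ they generate. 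Since $C_r\in R_{pol}$ we have $c_r\in\phi(R_{pol})$, and by Proposition 5.1 also $A_s(p)\subseteq\phi(R_{pol})$; hence $B\subseteq\phi(R_{pol})\subseteq A_s$, and the whole content is the reverse inclusion $\phi(R_{pol})\subseteq B$.

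To obtain this inclusion I would use Theorem 5.3. Choosing the good collection $\{\nabla(\lambda)\}_{\lambda\in X(T)^{++}}$, whose formal supercharacters $\chi(\nabla(\lambda))$ are the hook (super) Schur functions $hs_\lambda$, reduces the claim to showing that every $hs_\lambda$ lies in $B$. I would argue by induction on $\lambda$ in the dominance order, comparing leading summands exactly as in the proofs of Lemma 5.5 and Theorem 5.3: the leading weight of $hs_\lambda$ is matched by a monomial in the $c_r$, as in the characteristic-zero generation of $A_s$ by the $c_r$ (see \cite{stem}), and subtracting this monomial strictly lowers the leading weight, so that the difference falls under the inductive hypothesis.

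The positive-characteristic content enters precisely where the matching by $c_r$-monomials fails, namely when the relevant multiplicities are divisible by $p$. The mechanism I expect to resolve this is a Steinberg-type splitting $\lambda=p\mu+\nu$ of the polynomial weight together with a Frobenius factorization $hs_\lambda\equiv hs_\nu\cdot hs_\mu(x^p\,|\,y^p)$ modulo lower terms: the restricted factor $hs_\nu$ is reached through the $c_r$ as above, while the twisted factor $hs_\mu(x^p|y^p)$ is a $p$-balanced supersymmetric polynomial and hence lies in $A_s(p)\subseteq B$ by Proposition 5.1 (this is where the Berezinian corrections produce the non-integral generators $u_k$). Multiplying the two factors keeps us inside $B$, and the induction closes.

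The hard part will be to make the Frobenius factorization rigorous. Unlike the even case, there is no algebra endomorphism of $A_s$ sending $x_i\mapsto x_i^p,\ y_j\mapsto y_j^p$ and preserving the cancellation condition $\frac{d}{dt}(f|_{x_1=y_1=t})=0$, so the twist $hs_\mu(x^p|y^p)$ must be produced and tracked only through the $p$-th power generators $\sigma_i(x)^p,\sigma_j(y)^p$ and the Berezinian twists $u_k$, and the lower-order error terms in the factorization must themselves be shown to stay in $B$ along the induction. Establishing the underlying structure theorem --- essentially describing $A_s$ over a field of characteristic $p$ as the algebra generated by the $c_r$ over the Frobenius-type subalgebra $A_s(p)$ --- is exactly the missing ingredient, and this is why the statement is presently only a conjecture.
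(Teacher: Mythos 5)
You should know at the outset that the statement you were given is Conjecture~5.1 of the paper: the authors do not prove it, and the only thing they assert about it is a reduction, namely that, utilizing Proposition~5.1, it would follow from Conjecture~5.2 (the assertion that $A_s$ is generated over $A_s(p)$ by the elements $c_r$). Your first paragraph is exactly this reduction, carried out correctly: the listed elements do lie in $R_{pol}$, their $\phi$-images are $c_r$, $\sigma_i(x)^p$, $\sigma_j(y)^p$, $u_k(x|y)$, the last three families generate $A_s(p)$ by Proposition~5.1, the inclusion $\phi(R_{pol})\subseteq A_s$ holds in any characteristic, and injectivity of $\phi$ on $R$ (Theorem~5.2) transfers generation back from $\phi(R_{pol})$ to $R_{pol}$. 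So the subalgebra $B$ you introduce equals $\langle A_s(p), c_r : r\geq 0\rangle$, and the inclusion $\phi(R_{pol})\subseteq B$ you still owe is precisely Conjecture~5.2. Your concluding sentence concedes this. A proposal whose final step is ``establish the missing structure theorem'' is not a proof; it is a restatement of the conjecture, and this is the genuine gap --- though, to be fair, it is the same gap the authors themselves leave open.

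Two concrete reasons why your middle steps cannot be pushed through with the tools available in the paper. First, after invoking Theorem~5.3 with the good collection $\{\nabla(\lambda)\}_{\lambda\in X(T)^{++}}$, you need the supercharacters $\chi(\nabla(\lambda))$ in characteristic $p$; these are not computed in the paper, and identifying them with the classical hook Schur functions $hs_{\lambda}$ is itself a nontrivial modular statement you are not entitled to assume. Second, the leading-term induction ``as in the characteristic-zero generation of $A_s$ by the $c_r$'' (Stembridge's argument, \cite{stem}) demonstrably fails modulo $p$: already for $m=n=1$ one computes $c_r=(-1)^{r-1}y_1^{r-1}(x_1-y_1)$, so every polynomial in the $c_r$ is constant on the diagonal $x_1=y_1$, whereas $y_1^p$ lies in $A_s$ (its diagonal restriction $t^p$ has zero derivative in characteristic $p$) but is not constant there; this is exactly why the extra generators $\sigma_i(x)^p$, $\sigma_j(y)^p$, $u_k$ appear in the conjecture at all. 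Your proposed remedy --- a Steinberg-type splitting $\lambda=p\mu+\nu$ with a factorization $hs_{\lambda}\equiv hs_{\nu}\cdot hs_{\mu}(x^p|y^p)$ modulo lower terms, the twisted factor being absorbed into $A_s(p)$ --- is a sensible strategy, but no such factorization is proved anywhere in the paper, there is no Frobenius endomorphism of $A_s$ to produce it (as you note), and proving it with control of the error terms is essentially equivalent to Conjecture~5.2. In short: your reduction coincides with the paper's, your sketch beyond it is speculation, and the step you leave open is the same step the paper leaves open.
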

Utilizing Proposition 5.1, the above conjecture would follow from the following one. 

\begin{hyp}
The algebra $A_s$ is generated over $A_s(p)$ by the elements $c_r$ for 
$r\geq 0$.
\end{hyp}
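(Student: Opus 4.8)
The plan is to argue by induction on the leading weight. For $0\neq f\in A_s$ the maximal weight $\lambda$ occurring in $f$ is dominant and polynomial, i.e. $\lambda\in X(T)^{++}$ (this is exactly the range in which $A_s$ is supported, as in the discussion preceding Lemma 5.7), and by Lemma 5.7 the set of weights below a fixed polynomial weight is finite, so any strictly decreasing chain of leading weights terminates and an induction of this type is legitimate. Moreover, as recorded in Lemma 5.5 and Example 5.2, the leading coefficient of a supersymmetric character is a nonzero scalar which may be normalized to $1$, so coefficients never obstruct the reduction. Thus it suffices to produce, for every $\lambda\in X(T)^{++}$, an element $g\in A_s(p)[c_r:r\geq 0]$ whose leading weight is again $\lambda$; subtracting a scalar multiple of $g$ from $f$ strictly lowers the leading weight and the induction closes.

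Second, I would try to build $g$ as a single monomial $h\cdot\prod_r c_r^{a_r}$, with $h$ a product of the generators of $A_s(p)$ from Proposition 5.1. For this one computes leading weights: applying $\phi$ to Corollary 5.1, $c_r$ has leading weight $(1^r,0^{m-r}\mid 0^n)$ for $r\leq m$ and $(1^m\mid r-m,0^{n-1})$ for $r>m$, while $\sigma_i(x)^p,\ \sigma_j(y)^p,\ u_k$ have leading weights $(p^i,0^{m-i}\mid 0^n)$, $(0^m\mid p^j,0^{n-j})$ and $(k^m\mid (p-k)^n)$ respectively. Since the dominance-maximal weight of a product is the sum of the maximal weights, products of the $c_r$ realize only leading weights whose $y$-part is a \emph{single row} $(S,0^{n-1})$. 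This is the crux of the difficulty: a generic $\lambda\in X(T)^{++}$ has a multi-row $y$-part, and such weights are \emph{not} in the monoid generated by the leading weights of the $c_r$ and of the $A_s(p)$-generators. In characteristic $0$ they are nevertheless reached, but only after the single-row top terms \emph{cancel}; hence the $c_r$ are not a SAGBI-type generating set and the naive leading-term matching cannot succeed on its own.

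To get past this I would pass to an integral model. Over $\mathbf Z$ the hook Schur functions $hs_\lambda$ ($\lambda\in X(T)^{++}$) form a $\mathbf Z$-basis of the supersymmetric ring $A_s^{\mathbf Z}$, the $c_r$ have integer coefficients, and $A_s^{\mathbf Q}=\mathbf Q[c_r]$ by Stembridge \cite{stem}. The key structural input in characteristic $p$ is the elementary observation that if $f\in A_s$ then $f^p$ is symmetric, $p$-balanced, and annihilated by $\frac{d}{dt}\big|_{x_1=y_1=t}$ (the derivative of a $p$-th power vanishes), so $f^p\in A_s(p)$. Thus $A_s^{[p]}\subseteq A_s(p)$, and $A_s$ is a purely inseparable, height-one extension of $A_s(p)$. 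The task therefore reduces to showing that the residues of the $hs_\lambda$ modulo $p$ already lie in $A_s(p)[c_r]$, which I would attempt through a Steinberg-type factorization $\chi(L(\lambda))\equiv \chi(L(\lambda^{(0)}))^{[p]}\,\chi(L(\lambda'))$ obtained by writing $\lambda$ in base $p$: the Frobenius-twisted factor is a $p$-th power and so lands in $A_s(p)$ by the remark above, while the restricted factor $\chi(L(\lambda'))$ ought to be expressible in the $c_r$, since for restricted weights the characteristic-$0$ Newton-type expressions carry no $p$ in their denominators.

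The main obstacle is precisely this last step. A genuine Steinberg tensor product theorem for $GL(m|n)$ in characteristic $p$ is not available (compare the cautionary Remark 5.3 about the representation-theoretic inputs), so the factorization of $\chi(L(\lambda))$ must instead be extracted directly from the combinatorics of hook Schur functions modulo $p$ --- equivalently, one must control exactly which Newton and Jacobi--Trudi identities among the $c_r$ acquire a factor of $p$ in a denominator, and show that every such defect is absorbed by $\sigma_i(x)^p$, $\sigma_j(y)^p$ and the Berezinian shifts $u_k$. An alternative, cleaner-looking route is differential: because $A_s^{[p]}\subseteq A_s(p)$, the generation $A_s=A_s(p)[c_r]$ would follow, by a Frobenius--Nakayama argument for purely inseparable extensions, once one shows that the $dc_r$ generate the module of K\"ahler differentials $\Omega_{A_s/A_s(p)}$; but computing this module in the modular case, and justifying the converse implication, is itself the hard analytic heart of the conjecture.
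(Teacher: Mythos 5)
The first thing to say is that this statement is not a theorem of the paper: it is the paper's final Conjecture, stated and left open. The authors say explicitly that ``in the case of positive characteristic, the generators of $R_{pol}$ are unknown,'' and they record only that their Conjecture 5.1 on generators of $R_{pol}$ \emph{would follow} from this one via Proposition 5.1. So there is no proof in the paper to compare yours against; a complete argument here would be new mathematics, not a reconstruction of something the authors did.

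Judged on its own terms, your proposal is not a proof either, and you concede this at the two decisive points. What you do establish is correct and worth keeping: the induction on leading weights is legitimately founded (Lemma 5.7 gives finiteness of $(\mu]\cap X(T)^+_{\geq 0}$); the observation that $f^p\in A_s(p)$ for $f\in A_s$ is right, since Frobenius makes every exponent divisible by $p$ (hence $p$-balanced) and kills the derivative condition, so $A_s$ is a height-one purely inseparable extension of $A_s(p)$; and your diagnosis that leading-term matching cannot close the induction --- monomials in the $c_r$ and in the generators of $A_s(p)$ from Proposition 5.1 only realize leading weights whose $y$-part is a single row, up to $p$-balanced shifts --- identifies exactly the obstruction that makes this a conjecture rather than an exercise. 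But both escape routes you then propose end in admitted gaps. First, the Steinberg-type factorization $\chi(L(\lambda))\equiv\chi(L(\lambda^{(0)}))^{[p]}\,\chi(L(\lambda'))$ is not a theorem for $GL(m|n)$ in characteristic $p$, and you supply no combinatorial substitute controlling hook Schur functions modulo $p$; that is precisely the missing ingredient, not a technical detail to be filled in later. Second, the differential route is unjustified as stated: the correspondence between intermediate rings of a height-one purely inseparable extension and submodules of K\"ahler differentials (Jacobson's theorem) is a statement about \emph{fields}; $A_s$ is not a field, generation over the fraction field would not automatically descend to the graded algebras, and in any case you neither verify that the $dc_r$ generate $\Omega_{A_s/A_s(p)}$ nor prove the implication you would need from that. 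So the proposal should be read as a useful map of the obstructions --- one that correctly explains why the naive SAGBI-type argument and the characteristic-zero argument of Stembridge both fail here --- but the conjecture remains open after it, exactly as it does in the paper.
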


\begin{center}
\bf Acknowledgements
\end{center}
This work was supported by RFFI 10-01-00383a and by INDAM.

\end{document}